\title{Automorphism groups of curves over arbitrary fields}
\author{Daniel Bragg}
\address{Department of Mathematics, University of Utah, Salt Lake City, UT 84112}
\email{bragg@math.utah.edu}
\begin{document}

\begin{abstract}
    We show that if $K$ is an arbitrary field and $G$ is a finite group then there exists a curve over $K$ with automorphism group $G$. We also give a positive solution to the weak inverse Galois problem for function fields over an arbitrary field. Finally, we reduce the inverse Galois problem for function fields over an arbitrary field $K$ to the case of $K(T)$.
\end{abstract}

\maketitle

%As of 4/5/23, this is arXiv [v1], except that I sorted the bibliography after submitting.

%Changes to make for v2:
%-cite schroer somewhere?
%-mention problem in higher dimensions, and of prescribing automorphism group schemes
%-mention references from that email
%-maybe mention lesuitre?

%I started updating this on 6/7/23, and saved the old version (arxiv [v1]) in a backup tex file.

%\tableofcontents

\section{Introduction}\label{sec:intro}

The goal of this paper is to prove the following result.

\begin{theorem}\label{thm:second corollary to main theorem for curves}
    Let $K$ be a field and let $G$ be a finite group. There exists a smooth curve $C$ over $K$ such that $\Aut_K(C)=\Aut_{\overline{K}}(C_{\overline{K}})\cong G$. Moreover, we may choose $C$ to have arbitrarily large genus.
\end{theorem}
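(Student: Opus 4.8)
\emph{Strategy.}
The plan is an algebro-geometric version of Frucht's theorem (every finite group is the automorphism group of a finite graph): I would encode $G$ in a stable curve assembled from a ``colored Cayley graph'' of $G$, and then pass to a smooth curve by a $G$-equivariant smoothing. As a preliminary --- which is the case $G=1$, and which also supplies the building blocks --- I would show that over an arbitrary field $K$ there exist, in arbitrarily large genus, smooth projective geometrically connected curves $D$ with $\Aut_{\overline{K}}(D_{\overline{K}})=1$; since every curve has infinitely many closed points of various residue degrees, such a $D$ automatically carries as many distinct closed points as we wish. Over an infinite field one takes $D$ to be a sufficiently general smooth plane curve of large degree (or a general branched cover of $\mathbb{P}^1$ with many branch points), using that for genus $\ge 3$ the locus of curves with a nontrivial automorphism has positive codimension in the relevant parameter space; over a finite field one observes that this ``bad'' locus has controlled degree and so cannot contain all $\mathbb{F}_q$-rational points once the genus, hence the dimension of the parameter space, is large enough relative to $q$. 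The same input produces finitely many pairwise geometrically non-isomorphic rigid curves $E_1,\dots,E_d$ of genus $\ge 3$ to serve as ``edges''.

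\emph{The stable model and its automorphisms.}
Fix a generating set $s_1,\dots,s_d$ of $G$ with $d\ge 2$ (pad a minimal one if necessary; this also handles $G$ cyclic or trivial). For each $g\in G$ take a copy $D^{(g)}$ of $D$, and for each $g$ and $i$ a copy $E_i^{(g)}$ of $E_i$; form the nodal curve $C_0$ over $K$ by gluing, for all $g,i$, one marked closed point of $E_i^{(g)}$ to a chosen closed point $a_i$ of $D^{(g)}$ and the other marked point of $E_i^{(g)}$ to a closed point $b_i$ of $D^{(g s_i)}$, where $a_1,\dots,a_d,b_1,\dots,b_d$ are distinct points of $D$ and residue fields are matched appropriately. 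Since all pieces have genus $\ge 3$, $C_0$ is a connected stable curve over $K$, and it carries a faithful $K$-rational action of $G$ by left translation ($h\colon D^{(g)}\xrightarrow{\sim}D^{(hg)}$, $E_i^{(g)}\xrightarrow{\sim}E_i^{(hg)}$), under which $G$ acts freely on the set of irreducible components. I claim $\Aut_{\overline{K}}(C_{0,\overline{K}})=G$: any geometric automorphism permutes the components; a vertex curve carries $2d$ gluing clusters and an edge curve only $2$, and the $E_i$ are pairwise non-isomorphic, so vertex-type and $i$-colored-edge-type are preserved; since the pieces are rigid the induced isomorphisms between copies are the unique ones and carry marked points to marked points; and compatibility with the $i$-colored edges forces the induced permutation $\sigma$ of the index set $G$ to satisfy $\sigma(g s_i)=\sigma(g)s_i$ for all $g,i$, i.e. $\sigma$ is left translation by $\sigma(e)$. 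Hence $G\subseteq\Aut_K(C_0)\subseteq\Aut_{\overline{K}}(C_{0,\overline{K}})=G$, so $\Aut_K(C_0)=G$ as well.

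\emph{Equivariant smoothing.}
As $G$ acts freely on components, every node of $C_0$ has trivial $G$-stabilizer, so the $G$-action propagates through the $G$-equivariant versal deformation, which is unobstructed because $C_0$ is a curve. Algebraizing gives a flat proper family of stable curves $\mathcal{C}\to S$ over a finite-type $K$-scheme carrying a $G$-action, a point $s_0\in S(K)$ with $\mathcal{C}_{s_0}\cong C_0$, and nonempty open subschemes $S^{\mathrm{sm}}\subseteq S$ (smooth fiber) and $S^{\mathrm{faith}}\subseteq S$ (faithful $G$-action on the fiber) with $s_0\in\overline{S^{\mathrm{sm}}}\cap S^{\mathrm{faith}}$. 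Since the fibers have genus $\ge 2$, the automorphism group scheme of $\mathcal{C}/S$ is finite and unramified, so $s\mapsto\#\Aut_{\overline{\kappa(s)}}(\mathcal{C}_s)$ is upper semicontinuous; combined with the faithful $G$-action it equals $|G|$ on a nonempty open $U\ni s_0$. Choosing $s\in(U\cap S^{\mathrm{sm}})(K)$, the curve $C:=\mathcal{C}_s$ is smooth, has genus $p_a(C_0)\ge |G|\cdot g(D)$ --- hence can be made arbitrarily large by taking $g(D)$ large --- and satisfies $\Aut_{\overline{K}}(C_{\overline{K}})\cong G$; the $G$-action being $K$-rational throughout, $G\subseteq\Aut_K(C)\subseteq\Aut_{\overline{K}}(C_{\overline{K}})=G$, giving $\Aut_K(C)=\Aut_{\overline{K}}(C_{\overline{K}})\cong G$, as required.

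\emph{Main difficulty.}
The rigidity computation is essentially combinatorial; the genuine work over an \emph{arbitrary} field lies in (i) the existence of rigid curves over a finite field in large genus and (ii) the production of the point $s\in(U\cap S^{\mathrm{sm}})(K)$ --- equivalently, realizing the equivariant smoothing over $K$ itself rather than over a finite extension. Both are immediate over an infinite field; over a finite field one must bound the degrees of the relevant ``bad'' loci and take the genus large relative to $q$, or instead smooth one $G$-orbit of nodes at a time over $\mathbb{P}^1_{\mathbb{F}_q}$ and count points at each step. An alternative to the first two steps would be to realize $G$ as the Galois group of a geometrically connected cover $C\to B$ of a rigid base curve $B$ --- using the weak inverse Galois statement for function fields --- and to bound $\Aut(C_{\overline{K}})$ by arguing that this cover is rigid among degree-$|G|$ maps out of $C$; this trades the combinatorics for a genericity argument on Hurwitz spaces.
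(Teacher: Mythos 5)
Your route (Frucht-style stable curve built from rigid pieces along a Cayley graph, then $G$-equivariant smoothing) is genuinely different from the paper's (which never degenerates: it builds a curve with a free $G$-action by a Bertini argument, kills excess automorphisms via fiber products with a power map on $\mathbf{P}^1$ and with Poonen's rigid trigonal curve, and controls descent of automorphisms with the Castelnuovo--Severi inequality). The combinatorial rigidity of $C_0$ is fine modulo bookkeeping (matching residue fields at the glued points, and ensuring the $E_i$ and $D$ are geometrically rigid and pairwise non-isomorphic -- Poonen's explicit curves could be used to supply these, including over finite fields, where your own dimension-count sketch is not worked out). The real problem is elsewhere.

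The gap is the final step: producing a point $s\in (U\cap S^{\mathrm{sm}})(K)$, i.e.\ realizing the smoothing over $K$ itself. You assert this is ``immediate over an infinite field,'' but it is not. Unobstructedness gives that $S$ is smooth at the $K$-point $s_0$, and smoothness at a rational point does \emph{not} force the existence of any further $K$-points, let alone $K$-points avoiding the boundary divisor: a smooth curve of genus $2$ over $\mathbf{Q}$ with exactly one rational point is already a counterexample to the principle you are invoking. The algebraized versal deformation space is just some smooth $K$-scheme with no canonical rational parametrization, and $\Spec K[[t_1,\dots,t_N]]$ itself has no $K$-points other than the closed one, so ``move a little inside $S$'' only works over algebraically closed (or at least large/fertile) fields, or $p$-adically over local fields. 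This rational-point problem is precisely the arithmetic difficulty the paper's fiber-product construction is designed to avoid (its auxiliary choices live in linear systems and in explicit maps $\mathbf{P}^1\to\mathbf{P}^1$, where Bertini over infinite fields and Poonen's closed-point Bertini / Kedlaya-type pencils over finite fields supply rational data). To rescue your approach you would need to replace the abstract versal family by an explicitly $K$-rational equivariant smoothing -- e.g.\ a $G$-stable family over an open subset of an affine space, or a node-by-node smoothing over $\mathbf{P}^1_K$ as you suggest for finite fields -- and constructing such a family is essentially the whole difficulty, not a footnote; as it stands the argument only proves the theorem over fields where smooth varieties with a rational point have Zariski-dense rational points.
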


Here, by a \textit{curve} we mean a proper geometrically integral $K$-scheme of dimension 1. %In particular, we do not assume geometrically integral. 
This result was previously known in the special case when $K$ is algebraically closed, due to Madden--Valentini \cite{MR705883}, and in the case when $K$ is finite, due to Rzedowski--Calder\'{o}n and Villa--Salvador \cite[Theorem 5]{MR1120718}. We also prove the following more general result, which allows the prescription of the automorphism group scheme.

\begin{theorem}\label{thm:first corollary to main theorem for curves}
    Let $K$ be a field and let $G$ be a finite \'{e}tale group scheme over $K$. There exists a smooth curve $C$ over $K$ such that $\sAut_K(C)\cong G$. Moreover, we may choose $C$ to have arbitrarily large genus.
\end{theorem}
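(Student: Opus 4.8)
The plan is to deduce this from a \emph{rigidity}-type construction via Galois descent for covers.

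Write $H = G(\overline{K})$ for the finite group of geometric points of $G$. The natural action of $\mathrm{Gal}(\overline{K}/K)$ on $H$ has finite image, hence factors through a homomorphism $\rho\colon \Gamma \to \Aut(H)$, where $\Gamma = \mathrm{Gal}(L/K)$ for a finite Galois extension $L/K$ (a splitting field of $G$); thus $G$ is exactly the $K$-form of the constant group scheme $\underline{H}$ determined by $\rho$. I claim it is enough to produce a smooth curve $X$ over $K$, a reduced effective divisor $B\subseteq X$ defined over $K$, and a finite morphism $p\colon P\to X$, with $P$ a smooth curve over $K$ of genus $\geq 2$, such that $p$ is branched exactly along $B$, restricts over $X\setminus B$ to a \emph{geometrically connected} $G$-torsor (so $P$ is geometrically integral), and has the property that $\Aut_{\overline{K}}(P_{\overline{K}})$ equals the group $H$ of deck transformations of $p_{\overline{K}}$. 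Granting this, the $G$-torsor structure identifies $G$ with the automorphism group scheme of $P$ over $X$, a closed subgroup scheme of the finite \'etale group scheme $\sAut_K(P)$; the rigidity hypothesis says this inclusion is an isomorphism over $\overline{K}$, hence an isomorphism, so $\sAut_K(P)\cong G$. Enlarging $B$ will make $g(P)$ arbitrarily large.

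To build this data, take $X=\mathbb{P}^1_K$ and let $B$ be a disjoint union of closed points with residue fields contained in $L$, so that $B_{\overline{K}}$ is a disjoint union of $\Gamma$-orbits on which $\mathrm{Gal}(\overline{K}/K)$ acts through $\Gamma$, together with a suitable number of $K$-rational points, all chosen in general position and with pairwise distinct inertia data. Giving a $G$-cover of $(\mathbb{P}^1_K,B)$ amounts to giving a geometrically connected $H$-cover of $(\mathbb{P}^1_{\overline{K}},B_{\overline{K}})$ equipped with a descent datum that is compatible, via $\rho$, with the deck action; concretely (in the tame case) this is a surjection $\pi_1(\mathbb{P}^1_{\overline{K}}\setminus B_{\overline{K}})\twoheadrightarrow H$ which is equivariant for the $\mathrm{Gal}(\overline{K}/K)$-action on the source and the $\rho$-action on the target. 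One produces such a surjection by sending the inertia generator over the point indexed by $\gamma$ in a given $\Gamma$-orbit to $\rho(\gamma)$ applied to a fixed element of $H$, and sending inertia generators over the $K$-rational points to $\Gamma$-fixed elements, using enough orbits and enough $K$-points that the images generate $H$ and that the resulting tuple of local monodromies is rigid in the sense of Madden--Valentini. Letting $P$ be the normalization of $\mathbb{P}^1_K$ in the function field of the cover, $p\colon P\to \mathbb{P}^1_K$ has the desired properties, and $g(P)\to\infty$ as $\deg B\to\infty$.

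I expect the main obstacle to be arranging the monodromy to be equivariant \emph{and} rigid simultaneously. The Galois action on $\pi_1(\mathbb{P}^1_{\overline{K}}\setminus B_{\overline{K}})$ does not merely permute inertia generators: it also twists them by the cyclotomic character and by conjugation, so either the local monodromy conjugacy classes must be chosen to be ``rational'' (stable under these twists), or the cover must first be defined over a controlled intermediate extension and then descended; and there is a secondary obstruction, living in $H^2(\Gamma, Z(H))$, to promoting a $\Gamma$-stable geometric cover to an actual $G$-cover over $K$, which one kills by further enlarging $B$. Simultaneously, the branch data must be chosen so that $(\mathbb{P}^1,B)$ has no automorphisms and so that every geometric automorphism of $P$ descends to $\mathbb{P}^1$, which is what forces $\Aut_{\overline{K}}(P_{\overline{K}})=H$. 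In positive characteristic the topological description of $\pi_1$ is unavailable, and one must instead appeal to the known structure of the tame (and full) \'etale fundamental group of an affine curve, or to explicit patching constructions, to realize covers with prescribed branch locus and monodromy; making the argument uniform in the characteristic is, I expect, the crux.
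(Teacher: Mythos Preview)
Your proposal is a sketch rather than a proof: you correctly identify the main obstacles (Galois equivariance of monodromy versus cyclotomic twisting, the $H^2(\Gamma,Z(H))$ descent obstruction, and the absence of a topological description of $\pi_1$ in positive characteristic), but you do not actually overcome any of them. In particular, the sentence ``using enough orbits and enough $K$-points that the images generate $H$ and that the resulting tuple of local monodromies is rigid'' conceals the entire difficulty. Even over $\overline{K}$, arranging that \emph{every} automorphism of the cover descends to $\mathbf{P}^1$ and that $\Aut(\mathbf{P}^1,B)=1$ requires a real argument; doing this while keeping the monodromy tuple $\mathrm{Gal}(\overline{K}/K)$-equivariant (not merely up to conjugacy, since you need a descent datum and not only a $K$-rational isomorphism class) is strictly harder; and in positive characteristic you have not indicated any mechanism for producing the cover at all, since appealing to ``the known structure of the tame fundamental group'' still leaves you with the problem of realizing a \emph{specified} surjection with \emph{specified} inertia, equivariantly. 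As written, the proposal would at best yield the theorem over fields of characteristic zero, and even there only after substantial further work that you have deferred.

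The paper takes an entirely different route that avoids $\pi_1$, monodromy, and descent obstructions altogether. Rather than building a $G$-Galois cover of $\mathbf{P}^1$ and then arguing rigidity, the paper first produces (by an equivariant Bertini argument in a projective representation of $G$) a regular curve $C$ with a \emph{free} $G$-action over any given base curve $X$; this step already gives $G\subset\sAut_K(C)$ with the correct Galois action built in, so no descent gymnastics are needed. The excess automorphisms of $C$ are then killed not by rigidity of branch data but by taking a fiber product $E=D\times_{\mathbf{P}^1}Y$, where $Y$ is one of Poonen's curves with $\sAut_K(Y)=1$ and $D$ is a base change of $C$ along a map $\varphi:\mathbf{P}^1\to\mathbf{P}^1$ chosen so that the resulting pencil is ``$\lambda$-incompressible''. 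The Castelnuovo--Severi inequality then forces every automorphism of $E$ to descend to $Y$, hence to be trivial outside of $G$. All of these ingredients (Bertini with a finite-field substitute, Riemann--Hurwitz, Castelnuovo--Severi, Poonen's explicit curves) are characteristic-free, which is exactly what your approach struggles to achieve.
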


We deduce Theorems \ref{thm:second corollary to main theorem for curves} and \ref{thm:first corollary to main theorem for curves} as a consequence of the following.

\begin{theorem}\label{thm:main theorem for curves}
    Let $K$ be a field, let $X$ be a regular curve over $K$, and let $G$ be a finite \'{e}tale group scheme over $K$. There exists a regular curve $C$ over $K$ and a finite morphism $C\to X$ such that 
    \[
        \sAut_K(C/X)=\sAut_K(C)\cong G.
    \]
    Moreover, we may choose $C$ to have arbitrarily large genus, and if $X$ is smooth, then we may choose $C$ to be smooth.
    %and so that the morphism $C\to D$ induces an isomorphism on fields of constants.
    %($G$ could probably be a group scheme over $D$, even. Is this interesting?)
\end{theorem}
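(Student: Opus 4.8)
\emph{Strategy.} I will build $C$ in two stages: first realize $G$ as the full relative automorphism group scheme $\sAut_K(Y/X)$ of \emph{some} finite cover $Y\to X$, and then enlarge the branch locus so that the total space acquires no further automorphisms. The first stage is a refinement of the weak inverse Galois problem for function fields (it prescribes the \'{e}tale group \emph{scheme}, not merely the abstract group), and the second stage produces both the large genus and the equality $\sAut_K(C)=\sAut_K(C/X)$.

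\emph{Step 1: realizing $G$.} The \'{e}tale group scheme $G$ is encoded by the finite group $\Gamma=G(\overline K)$ together with the continuous action $\rho\colon\mathrm{Gal}(\overline K/K)\to\mathrm{Aut}(\Gamma)$; fix a finite Galois extension $L/K$ splitting $\rho$. Choose a $\mathrm{Gal}(\overline K/K)$-stable finite subset $S\subset X$ and, invoking known realizations of finite groups as Galois groups of connected branched covers of curves over an algebraically closed field (the Riemann existence theorem when $\mathrm{char}\,K=0$, the Raynaud--Harbater theorem on Abhyankar's conjecture when $\mathrm{char}\,K>0$), produce a connected $\Gamma$-Galois cover of $X_{\overline K}$ branched along $S_{\overline K}$, arranged to be defined over $L$. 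Twisting its $\Gamma$-action by the cocycle attached to $\rho$ and descending the resulting branched cover together with its $G$-action along $L/K$ --- the rationality argument that is the substance of the weak inverse Galois statement --- yields a regular, geometrically connected curve $Y$ over $K$ with a finite, generically \'{e}tale morphism $Y\to X$ for which $\sAut_K(Y/X)=G$ (over the complement of $S$ this is a connected $G$-torsor, whose relative automorphisms are exactly the deck translations). One has $G\subseteq\sAut_K(Y)$, but possibly properly.

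\emph{Step 2: rigidifying.} I now replace $Y\to X$ by a cover $C\to X$ --- obtained by enlarging $S$ (and, if necessary, composing with an auxiliary cover), all equivariantly for $G$ and for $\mathrm{Gal}(\overline K/K)$ --- chosen so that: (i) the $G$-action persists with $C/G=X$, so $\sAut_K(C/X)=G$; (ii) $|S|$ is large, so $g(C)$ is as large as desired by Riemann--Hurwitz; (iii) the pair consisting of $S_{\overline K}$ and the tuple of local monodromy classes along it is in \emph{general position}, meaning that no nontrivial automorphism of $X_{\overline K}$ preserves $S_{\overline K}$ together with this tuple; and (iv) the branch data is rigid enough that $G$ is the \emph{unique} subgroup $H\leq\mathrm{Aut}_{\overline K}(C_{\overline K})$ for which the quotient $C_{\overline K}/H$, with its induced branch divisor, is isomorphic to $(X_{\overline K},S_{\overline K})$, whence $G$ is normal in $\mathrm{Aut}_{\overline K}(C_{\overline K})$. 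When $K$ is infinite, (iii) is secured by a dimension count over $X$; when $K$ is finite one instead supports $S$ at a single closed point of large degree, or works over a large finite subextension and descends.

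\emph{Conclusion, and the hard part.} Granting such a $C$: any $\alpha\in\mathrm{Aut}_{\overline K}(C_{\overline K})$ satisfies $\alpha G\alpha^{-1}=G$ by (iv), hence descends to an automorphism $\overline\alpha$ of $X_{\overline K}=C_{\overline K}/G$ that preserves $S_{\overline K}$ together with its monodromy tuple; by (iii), $\overline\alpha=\mathrm{id}$, so $\alpha\in\mathrm{Aut}_{\overline K}(C_{\overline K}/X_{\overline K})=\Gamma$. Thus $\sAut_{\overline K}(C_{\overline K})=\sAut_{\overline K}(C_{\overline K}/X_{\overline K})$, and since every step was carried out $\mathrm{Gal}(\overline K/K)$-equivariantly this descends to the asserted equality $\sAut_K(C)=\sAut_K(C/X)\cong G$. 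If $X$ is smooth then so is $C$, because a finite cover of a smooth curve whose total space is geometrically integral is automatically geometrically regular. In my view the main obstacle is Step 2: arranging the branch data to be simultaneously defined over $K$ with the correct $\rho$-twist, rigid in the senses (iii) and (iv), and realizable over \emph{every} field --- in particular over small finite fields such as $\mathbb F_2$ and $\mathbb F_3$, where the ``generic configuration'' arguments available over infinite fields collapse and must be replaced by an explicit construction, and in positive characteristic, where the wild part of the ramification has to be controlled as well.
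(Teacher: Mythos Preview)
Your proposal is a strategy, not a proof: the central Step~2 is left open. You list conditions (i)--(iv) that the branch configuration should satisfy but give no construction realizing them, and you explicitly flag this as ``the main obstacle.'' Condition (iv) in particular---that $G$ be the \emph{unique} subgroup of $\mathrm{Aut}_{\overline K}(C_{\overline K})$ whose quotient is $(X_{\overline K},S_{\overline K})$---is not something one can simply arrange by enlarging $S$; nothing prevents $\mathrm{Aut}_{\overline K}(C_{\overline K})$ from containing a conjugate copy of $G$ acting differently. Condition (iii) is also delicate when $g_X\le 1$, since $\mathrm{Aut}(X_{\overline K})$ is then positive-dimensional. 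Step~1 has its own gap: you invoke ``the rationality argument that is the substance of the weak inverse Galois statement'' to descend the cover, but that statement is what the theorem is meant to establish, so this is circular; and the assertion that the Riemann-existence or Raynaud--Harbater cover can be chosen defined over $L$ with the prescribed Galois twist is not justified. Finally, your smoothness claim is false: a geometrically integral finite cover of a smooth curve need not be geometrically regular over an imperfect field (e.g.\ $y^2=x^3+t$ over $\mathbf F_3(t)$ is regular and geometrically integral but acquires a cusp over $\overline K$), so ``geometrically integral'' does not imply ``smooth.''

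The paper's argument avoids all of these difficulties by taking a completely different route. Rather than building a $G$-Galois cover over $\overline K$ and descending, it produces a curve $D$ with a free $G$-action directly over $K$ by a Bertini-type complete-intersection argument inside a projective space carrying a linear $G$-action; this requires no Riemann existence, no Abhyankar, and no descent. To eliminate excess automorphisms it does not manipulate branch data on $X$ at all: instead it finds a $G$-invariant pencil $D\to\mathbf P^1$ with $\sAut_K(D/\mathbf P^1)=G$, base-changes along a self-map $\mathbf P^1\to\mathbf P^1$ to make the pencil $\lambda$-incompressible, and then takes a fiber product with Poonen's curve $Y$ satisfying $\sAut_K(Y)=1$. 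The Castelnuovo--Severi inequality forces every automorphism of the resulting curve to descend to $Y$, hence to lie in $G$. This mechanism is what replaces your unproved conditions (iii) and (iv), and it works uniformly over all fields with only a minor modification to the choice of pencils in the finite-field case.
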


Here, $\sAut_K(C)$ denotes the automorphism group scheme of $C$ over $K$, and $\sAut_K(C/X)$ denotes the sub group scheme of automorphisms over $X$ (see \S\ref{ssec:notation}). We remark that the distinction between a regular curve and a smooth curve is only relevant over imperfect fields of positive characteristic. While constructions with smooth curves would suffice for our proof of Theorems \ref{thm:second corollary to main theorem for curves} and \ref{thm:first corollary to main theorem for curves}, we work in the slightly larger generality of regular curves in order to obtain more comprehensive results in positive characteristic.
%Theorem \ref{thm:main theorem for curves} implies the following, which strengthens Theorem \ref{thm:second corollary to main theorem for curves} to allow the prescription of the automorphism group scheme.

%Question: what group schemes can be the automorphism group of a curve of genus 0 or 1? in the latter case, could also look at the subgroup fixing a $K$--point.

The problem of constructing curves with prescribed automorphism groups is closely related to some classical problems in field theory. The \textit{weak inverse Galois problem} for a field $L$ is the question of whether there exists for each finite group $G$ a finite extension $F/L$ such that $\Aut(F/L)\cong G$. The \textit{(strong) inverse Galois problem} for a field $L$ requests furthermore that the extension $F/L$ be Galois. A \textit{function field} over a base field $K$ is a finitely generated field extension $L/K$ of transcendence degree 1. The association $C\mapsto k(C)$ defines an equivalence of categories between the category of regular curves over $K$ and finite morphisms and the category of function fields over $K$ with field of constants equal to $K$. Translating Theorem \ref{thm:main theorem for curves} via this equivalence, we obtain the following result, which gives in particular a positive solution to the weak inverse Galois problem for function fields over an arbitrary field.

\begin{theorem}\label{thm:inverse Galois}
    Let $K$ be a field, let $L$ be a function field over $K$, and let $K'/K$ be the field of constants of $L$. Let $G$ be a finite group. There exist finite extensions $F/L$ of arbitrarily large genus and with field of constants equal to $K'$ such that
    \[
        \Aut(F/L)=\Aut(F/K')\cong G.
    \]
\end{theorem}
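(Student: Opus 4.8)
The plan is to deduce Theorem \ref{thm:inverse Galois} directly from Theorem \ref{thm:main theorem for curves} by passing through the equivalence of categories between regular curves over $K$ with finite morphisms and function fields over $K$ with field of constants equal to $K$. First I would observe that the function field $L$ over $K$ corresponds to a regular curve $X$ over $K'$: indeed, $L$ is a function field over $K'$ whose field of constants is $K'$, so there is a regular curve $X$ over $K'$ with $k(X) = L$. Then I would apply Theorem \ref{thm:main theorem for curves} with the base field taken to be $K'$, the regular curve taken to be $X$, and the finite \'etale group scheme taken to be the constant group scheme $G_{K'}$ attached to $G$ over $K'$. This produces a regular curve $C$ over $K'$ together with a finite morphism $C \to X$ such that $\sAut_{K'}(C/X) = \sAut_{K'}(C) \cong G_{K'}$, and with $C$ of arbitrarily large genus.

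Next I would translate this back to field theory. The finite morphism $C \to X$ corresponds to a finite field extension $F/L$, where $F = k(C)$. Since $C$ is a regular curve over $K'$ with field of constants $K'$ (this is part of what it means for $C$ to lie in the relevant category, or follows from $C$ being geometrically connected over $K'$), the field of constants of $F$ is $K'$. I would then need to match up the automorphism groups: $\Aut(F/L)$ is the group of $K'$-algebra automorphisms of $F$ fixing $L$, which corresponds under the equivalence to the group of $K'$-automorphisms of $C$ commuting with the map to $X$, i.e. the $K'$-points of $\sAut_{K'}(C/X)$. Since $G_{K'}$ is the constant group scheme, $\sAut_{K'}(C/X)(K') \cong G$. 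Similarly $\Aut(F/K')$, the group of $K'$-automorphisms of $F$, corresponds to $\sAut_{K'}(C)(K') \cong G$. The equality $\sAut_{K'}(C/X) = \sAut_{K'}(C)$ as group schemes gives the equality $\Aut(F/L) = \Aut(F/K')$ of subgroups of automorphisms of $F$. Finally, the genus of $F/K'$ is by definition the genus of $C$, so $F$ has arbitrarily large genus as desired.

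The one point requiring a little care is the precise relationship between $\Aut(F/L)$ and the $K'$-points of $\sAut_{K'}(C/X)$ — a priori one should distinguish automorphisms of $F$ as an abstract field fixing $L$ from $K'$-linear ones. But since $K'$ is the field of constants of $F$, any field automorphism of $F$ necessarily preserves $K'$ (it is the algebraic closure of the prime field inside $F$ intersected with $\ldots$, or more precisely the relative algebraic closure of the image of $K$), and since it fixes $L \supseteq K$ it fixes $K'$ as well; hence every element of $\Aut(F/L)$ is automatically a $K'$-automorphism, and likewise for $\Aut(F/K')$. I expect this bookkeeping — verifying that the abstract-field and $K'$-linear notions of automorphism coincide, and that the equivalence of categories is compatible with the group-scheme-theoretic and naive notions of automorphism group — to be the only mildly delicate part; it is a matter of unwinding definitions rather than a substantive obstacle, and the genuine content of the theorem is entirely contained in Theorem \ref{thm:main theorem for curves}.
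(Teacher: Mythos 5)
Your proposal is correct and is exactly the paper's intended argument: the paper proves Theorem \ref{thm:inverse Galois} precisely by remarking that it is the translation of Theorem \ref{thm:main theorem for curves}, applied over the constant field $K'$ with the constant group scheme $G_{K'}$, through the equivalence between regular curves over $K'$ and function fields with field of constants $K'$. (Your ``delicate point'' is even simpler than you suggest: since $K'\subseteq L$, any automorphism fixing $L$ pointwise already fixes $K'$ pointwise.)
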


This result was previously known in some special cases, which we survey below.

\begin{remark}
    Our method gives no a--priori control on the degree of the field extension $F/L$. Rather, we take as input a finite field extension $L'/L$ whose automorphism group contains $G$, and produce $F$ as a further extension $F/L'$. While we can control the degree of the extension $F/L'$, we do not have anything in particular to say about that of $L'/L$.
\end{remark}

% \begin{remark}
%     Extending the methods of this paper, the statement of Theorem \ref{thm:inverse Galois} can be improved slightly by replacing the instance of $\Aut_{K'}(L)$ in the concluding equality with $\Aut_K(L)$.
% \end{remark}

% \begin{corollary}\label{cor:corollary to main theorem for fields}
%     Let $K$ be a field and let $G$ be a finite group. There exist regular field extensions $F/K$ with transcendence degree 1 of arbitrarily large genus such that $\Aut_K(F)\cong G$.
% \end{corollary}

We recall a variant of these construction problems in the context of function fields. An extension $L/K$ is \textit{regular} if $K$ is algebraically closed in $L$. The \textit{regular weak inverse Galois problem} for a field $K$ is the problem of finding for each finite group $G$ a finite extension $F/K(T)$ such that $F$ is regular over $K$ and $\Aut(F/K(T))\cong G$. The \textit{regular inverse Galois} problem requests in addition that $F/K(T)$ be Galois. Applying Theorem \ref{thm:inverse Galois} with $L=K(T)$, we obtain a positive solution of the regular weak inverse Galois problem for arbitrary fields $K$.

The strategy of the proof of Theorem \ref{thm:main theorem for curves} is to first select a regular curve $D$ equipped with a free $G$--action and a finite $G$--invariant morphism $D\to X$. The curve $D$ is obtained via a Bertini argument as a $G$--equivariant complete intersection in a projective space over $X$. As the $G$--action on $D$ is faithful, we have an inclusion $G\subset\sAut_K(D)$. The second step of the construction is to refine this curve $D$ so as to remove potential excess automorphisms. We show that there exists a $G$--invariant finite morphism $D\to\mathbf{P}^1$ such that the subgroup scheme $\sAut_K(D/\mathbf{P}^1)\subset\sAut_K(D)$ is equal to $G$. We then take a carefully chosen sequence of fiber products of curves to obtain a regular curve $C$ with $G\cong\sAut_K(C)$ and a $G$--equivariant finite morphism $C\to D$. This last step is the most involved part of the construction, and is inspired by techniques of Madan--Rosen \cite{MR1088443}, Madden--Valentini \cite{MR705883}, and Stichtenoth \cite{MR753434}. We also use Poonen's construction \cite{MR1748288} of trigonal curves over arbitrary fields with trivial automorphism group.
%Stichtenoth \cite[Satz 1, Satz 4]{MR753434}

We also prove a result which reduces the strong inverse Galois problem for function fields over a field $K$ to the case of $K(T)$. We state our result first in a geometric context. We say that a finite morphism $C\to D$ of regular curves over a field $K$ is \textit{Galois} if the corresponding extension $k(C)/k(D)$ of function fields is Galois. A finite morphism $C\to D$ of regular curves over $K$ is \textit{geometrically Galois} if its base change to a separable closure of $K$ is Galois. The \textit{Galois group scheme} of a geometrically Galois morphism $C\to D$ is the group scheme $\sAut_K(C/D)$.

\begin{theorem}\label{thm:reduction of inverse Galois problem, curves version}
    Let $K$ be an infinite field, let $G$ be a finite \'{e}tale group scheme over $K$, and suppose that there exists a regular curve $C$ and a geometrically Galois morphism $f:C\to\mathbf{P}^1$ with Galois group scheme $G$. If $Y$ is a regular curve over $K$, then there exists a regular curve $E$ over $K$ and a geometrically Galois morphism $E\to Y$ with Galois group scheme $G$. Furthermore, if $C$ and $Y$ are smooth, we may take $E$ to be smooth.
\end{theorem}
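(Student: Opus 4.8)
The plan is to realize $E$ as the normalization of a fibre product $C\times_{\mathbf{P}^1,\phi}Y$ for a carefully chosen finite morphism $\phi\colon Y\to\mathbf{P}^1$, produced by a Bertini--type argument. Write $L=K(T)$ for the function field of $\mathbf{P}^1$. Since $f$ is geometrically Galois it is generically \'etale, so $k(C)/L$ is separable and hence $k(C)\otimes_L k(Y)$ is reduced for any finite $\phi$. Set $\Gamma=G(K^{\mathrm{sep}})$, so that $C_{K^{\mathrm{sep}}}\to\mathbf{P}^1_{K^{\mathrm{sep}}}$ is a $\Gamma$--Galois cover and the $\operatorname{Gal}(K^{\mathrm{sep}}/K)$--module $\Gamma$ records the group scheme $G$. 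Then $k(C)\otimes_L k(Y)$ is a field, equivalently $C\times_{\mathbf{P}^1,\phi}Y$ is geometrically integral, exactly when $\phi$, base changed to $\overline{K}$, does not factor through any of the finitely many proper intermediate subcovers $Z_H\to\mathbf{P}^1$ of $C_{\overline{K}}\to\mathbf{P}^1_{\overline{K}}$, indexed by the maximal proper subgroups $H\subset\Gamma$. Granting such a $\phi$, I would take $E=(C\times_{\mathbf{P}^1,\phi}Y)^{\mathrm{norm}}$: this is a regular curve finite over $Y$ with $k(E)=k(C)\otimes_L k(Y)$, and over $K^{\mathrm{sep}}$ the classical Galois theory of compositums shows that $E_{K^{\mathrm{sep}}}\to Y_{K^{\mathrm{sep}}}$ is Galois with group $\Gamma$, via the restriction isomorphism $\Aut(E_{K^{\mathrm{sep}}}/Y_{K^{\mathrm{sep}}})\xrightarrow{\sim}\Aut(C_{K^{\mathrm{sep}}}/\mathbf{P}^1_{K^{\mathrm{sep}}})$. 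This isomorphism is $\operatorname{Gal}(K^{\mathrm{sep}}/K)$--equivariant---the Galois action on each side being conjugation by the semilinear operators $1\otimes(-)$, which preserve the relevant subfields---so $\sAut_K(E/Y)\cong\sAut_K(C/\mathbf{P}^1)\cong G$ and $E\to Y$ is geometrically Galois with Galois group scheme $G$.

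To produce $\phi$: since $Y$ is a regular, proper, geometrically integral curve it is projective, so I would embed $Y\hookrightarrow\mathbf{P}^N$ by a very ample line bundle of sufficiently large degree $d$, and consider the degree--$d$ morphisms $\phi_\Lambda\colon Y\to\mathbf{P}^1$ obtained by linear projection from codimension--$2$ centres disjoint from $Y$; these are parametrized by a nonempty open $U$ of a Grassmannian over $K$, with $\dim U$ of order $2d$. If $\phi_\Lambda$ factors through some $Z_H\to\mathbf{P}^1$ then it arises from a morphism $Y\to Z_H$ of degree $\le d/2$, and such morphisms---which factor through the smooth normalization $\widetilde{Z}_H$---form a family of dimension $\le d+1$ when $\widetilde{Z}_H\cong\mathbf{P}^1$ and of dimension $\le 1$ otherwise. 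Hence for $d$ large the irreducible image of $U$ in $\operatorname{Hom}_d(Y,\mathbf{P}^1)$ is not contained in the finite union of these loci, so the general $\phi_\Lambda$ has the required non--factorization property. The good locus in $U$ is open, and it is defined over $K$ because the set $\{Z_H\}$ is permuted by $\operatorname{Gal}(K^{\mathrm{sep}}/K)$; since $U$ is rational and $K$ is infinite, it has a $K$--point, and I take $\phi=\phi_\Lambda$ for such a $\Lambda$. If $C$ and $Y$ are smooth, I would also impose that $\phi$ be unramified over the finite branch locus of $f$: for $Y$ smooth a general projection is unramified over any fixed closed point, so this is another nonempty $K$--rational open condition on $\Lambda$. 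With $\phi$ so chosen, over every closed point of $\mathbf{P}^1$ at least one of $\phi,f$ is \'etale, which forces $C\times_{\mathbf{P}^1,\phi}Y$ to be smooth over $K$ (\'etale over $Y$ near the points above the branch locus of $f$, \'etale over $C$ near the points above the branch locus of $\phi$), hence normal, hence equal to $E$; so $E$ is smooth.

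I expect the main obstacle to be the geometric integrality of the fibre product, i.e.\ choosing a morphism $Y\to\mathbf{P}^1$ that does not factor through any proper subcover of $C\to\mathbf{P}^1$. This is the step that genuinely uses the hypothesis that $K$ is infinite---to extract a rational point from a dense open of the parameter space---together with the freedom to take $\deg\phi$, equivalently the genus of $E$, large.
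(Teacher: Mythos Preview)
Your argument is correct and takes a genuinely different route from the paper's. Both proofs realize $E$ as (the regular model of) a fibre product over $\mathbf{P}^1$ and must ensure geometric integrality of that product, but they achieve this differently. The paper does not choose the pencil on $Y$ carefully; instead it first replaces $f$ by a base change $f'\colon D=C\times_{\mathbf{P}^1,\varphi}\mathbf{P}^1\to\mathbf{P}^1$ along a suitable $\varphi\colon\mathbf{P}^1\to\mathbf{P}^1$ so that $f'$ is geometrically Galois and $\lambda$-incompressible with $\lambda>g_Y$ (Proposition~\ref{prop:incompressible lemma}), and then invokes Lemma~\ref{lem:galois + incompressible} to see that any such cover pulls back integrally along every map from a curve of genus $<\lambda$. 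Since this reuses machinery already built for Theorem~\ref{thm:finite cover of curve}, the argument is very short, and the disjoint-ramification hypothesis makes the fibre product regular without any normalization. You instead keep $f$ unchanged and choose $\phi\colon Y\to\mathbf{P}^1$ generically, via a dimension count in a Grassmannian of linear projections: the locus of $\phi$'s factoring through a fixed proper subcover has dimension $O(d)$ against $\dim U\sim 2d$, so a general $\phi$ avoids them all, and openness of geometric integrality in flat proper families plus rationality of $U$ over the infinite field $K$ yields a $K$-point. This is self-contained and bypasses the incompressibility apparatus entirely; the trade-off is a separate Bertini-type argument and, in the merely-regular case, a normalization step (which you could also avoid by imposing disjoint ramification there, as you do in the smooth case). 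One small wording point: ``$k(C)\otimes_L k(Y)$ is a field'' is equivalent to integrality, not geometric integrality; the latter needs the analogous statement over $\overline{K}(T)$, which is exactly what your non-factorization criterion over $\overline{K}$ verifies, so this is only a phrasing issue and not a gap.
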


Rephrased in terms of function fields, we have the following result, which in particular reduces the strong inverse Galois problem for all function fields $L$ over a field $K$ to the case of $L=K(T)$.

\begin{theorem}\label{thm:reduction of inverse Galois problem}
    Let $K$ be an arbitrary field, let $G$ be a finite group, and suppose that there exists a regular Galois extension $M/K(T)$ with Galois group $G$. If $L$ is a regular function field over $K$, then there exist regular Galois extensions $F/L$ with Galois group $G$ of arbitrarily large genus.
    %Moreover, if $M$ and $L$ are regular, we may choose $F$ to be regular.
\end{theorem}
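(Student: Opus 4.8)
The plan is to transport the statement across the equivalence between regular curves over $K$ (with finite morphisms) and function fields over $K$ with field of constants $K$, and then to invoke Theorem \ref{thm:reduction of inverse Galois problem, curves version}. Write $Y$ for the regular curve with $k(Y)=L$; that $L/K$ is regular says exactly that $Y$ is geometrically integral. The given regular Galois extension $M/K(T)$ with group $G$ corresponds to a finite morphism $f\colon C\to\mathbf{P}^1_K$ from a regular, geometrically integral curve $C$, and $f$ is Galois in the sense of the paper with $\Aut_K(C/\mathbf{P}^1)=\Aut(M/K(T))=G$. Under this dictionary a regular Galois extension $F/L$ with group $G$ is the same datum as a Galois morphism $E\to Y$ of regular curves with $\Aut_K(E/Y)\cong G$ (such an $E$ is automatically geometrically integral, being a curve in the sense of the paper), so it suffices to produce such an $E$, of large genus. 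The infinite case will be essentially a translation; the real work is for finite $K$.

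To apply Theorem \ref{thm:reduction of inverse Galois problem, curves version} I must first observe that $f$ is \emph{geometrically} Galois with Galois group scheme the constant group scheme $\underline{G}$. The point is that $M\cap\overline{K}(T)=K(T)$: any finite subextension of $\overline{K}(T)/K(T)$ is of the form $K'(T)$ with $K'/K$ finite, and $K'(T)\subseteq M$ would force $K'\subseteq M\cap\overline{K}=K$ by regularity of $M/K$. As $M/K(T)$ is Galois, $M$ and $\overline{K}(T)$ are therefore linearly disjoint over $K(T)$, so $M\overline{K}(T)$ is a field, Galois over $\overline{K}(T)$ with group $G$; and, $M/K$ being separable, $C$ is geometrically reduced, so $C_{\overline{K}}$ is integral with function field $M\overline{K}(T)$ and $C_{\overline{K}}\to\mathbf{P}^1_{\overline{K}}$ is Galois with automorphism group $G$. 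Since the restriction map $\Aut(M/K(T))\to\Aut(M\overline{K}(T)/\overline{K}(T))$ is an isomorphism, every such automorphism is already defined over $K$; hence $\sAut_K(C/\mathbf{P}^1)=\underline{G}$, a finite \'{e}tale $K$-group scheme, and $f$ is geometrically Galois with this Galois group scheme.

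If $K$ is infinite, Theorem \ref{thm:reduction of inverse Galois problem, curves version} applied to $f$ and $Y$ produces a regular curve $E$ and a geometrically Galois morphism $E\to Y$ with Galois group scheme $\underline{G}$. Then $F:=k(E)$ satisfies $[F:L]=\lvert G\rvert$ (the degree of $E\to Y$ equals $\lvert\underline{G}(K^{\mathrm{sep}})\rvert$) and is separable over $L$ (its base change to $K^{\mathrm{sep}}$ is Galois, hence separable), with $\Aut(F/L)=\Aut_K(E/Y)=\underline{G}(K)=G$; so $F/L$ is Galois with group $G$, and $F/K$ is regular because $E$ is geometrically integral. The arbitrarily-large-genus assertion is obtained by the standard device of building extra ramification into the cover: it is explicit in the finite-field case below, and in the infinite case follows by running the construction of Theorem \ref{thm:reduction of inverse Galois problem, curves version} on a $G$-cover $C'\to\mathbf{P}^1$ of prescribed large genus in place of $C$ — such $C'$ exist because $K(T)$, and $\overline{K}(T)$, are Hilbertian (pull $C$ back along a suitably ramified finite self-map of $\mathbf{P}^1$) — whose genus the construction propagates to $E$. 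This settles the theorem for infinite $K$.

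For finite $K$, where Theorem \ref{thm:reduction of inverse Galois problem, curves version} is not available, I would argue directly; this is the crux. Here $Y$ is smooth, $K$ being perfect. Let $N_1,\dots,N_r$ be the minimal subextensions of $M/K(T)$ strictly larger than $K(T)$, put $d_i=[N_i:K(T)]\ge 2$, and let $\Gamma_i\to\mathbf{P}^1_K$ be the corresponding degree-$d_i$ covers, which are regular and geometrically integral by the argument of the second paragraph applied to $N_i/K$. The idea is to choose a finite separable morphism $\phi\colon Y\to\mathbf{P}^1_K$ of degree prime to $p\,d_1\cdots d_r$. Such $\phi$ exist because, by the Weil bounds, $Y$ has a closed point $P$ of every sufficiently large degree, and then Riemann--Roch provides a function with a single pole at $P$, of some large order prime to $p\,d_1\cdots d_r$; the degree of $\phi$ is the product of this order with $\deg P$, hence prime to $p\,d_1\cdots d_r$ and in particular to $p$, so $\phi$ is separable. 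Since this degree is prime to every $d_i$, $\phi$ cannot factor through any $\Gamma_i\to\mathbf{P}^1_K$, and neither can $\phi_{\overline{K}}$ factor through any $(\Gamma_i)_{\overline{K}}\to\mathbf{P}^1_{\overline{K}}$ (the degrees are unchanged, and the $(\Gamma_i)_{\overline{K}}$ remain integral since $N_i/K$ is regular). Consequently $k(C_{\overline{K}})$ and $k(Y_{\overline{K}})$ meet in $\overline{K}(T)$ inside their compositum: their intersection is a subextension of the Galois extension $k(C_{\overline{K}})/\overline{K}(T)$, and a nontrivial one would contain some $N_i\overline{K}(T)$ and force $Y_{\overline{K}}\to\mathbf{P}^1_{\overline{K}}$ to factor through $(\Gamma_i)_{\overline{K}}$. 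Hence the normalization $E$ of $C\times_{\mathbf{P}^1_K,\phi}Y$ is a geometrically integral regular curve, $E\to Y$ is Galois with group $G$ (its function field is the compositum $M\cdot L$, Galois over $L$ with group $G$), and $F:=k(E)$ is regular over $K$ with $\Aut(F/L)\cong G$; and taking the pole order large makes $\phi$, hence $E\to Y$, highly ramified over the fixed branch locus of $C\to\mathbf{P}^1$, so $g(E)$ is unbounded. The delicate step is exactly this construction of $\phi$ over a finite field, for which the existence of closed points of prescribed large degree on curves over finite fields is the essential input.
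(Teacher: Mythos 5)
For infinite $K$, your argument is the paper's: translate through the curves/function-fields dictionary and invoke Theorem \ref{thm:reduction of inverse Galois problem, curves version}, and your verification that the given regular Galois extension $M/K(T)$ yields a geometrically Galois $f:C\to\mathbf{P}^1$ with constant Galois group scheme is correct. Two remarks. First, the arbitrarily-large-genus assertion is not part of the \emph{statement} of Theorem \ref{thm:reduction of inverse Galois problem, curves version} (it is only established inside its proof, where $g_E\geq g_D\geq\lambda$), so some supplement is indeed needed; your device of first replacing $C$ by a large-genus geometrically Galois cover $C'\to\mathbf{P}^1$ is fine in substance, but the clean justification is not Hilbertianity: pull back along a self-map of $\mathbf{P}^1$ with a totally ramified fiber away from the branch locus and use Lemma \ref{lem:fiber product lemma} and Proposition \ref{prop:key galois proposition} (connectedness and preservation of the Galois group scheme), exactly as in Proposition \ref{prop:incompressible lemma}; and the ``propagation'' of genus to $E$ uses that in the construction $E$ dominates $C'$ by a finite morphism, which is not visible from the statement alone. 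Second, for finite $K$ the paper does not reprove the result at all: it cites Rzedowski--Calder\'{o}n--Villa--Salvador and \'{A}lvarez--Garc\'{i}a--Villa--Salvador, so your direct finite-field argument goes beyond the paper.

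That finite-field argument, however, has a genuine gap in the genus claim. The compositum construction itself is essentially sound (one small repair: you need $\deg P$, not just the pole order $n$, to be coprime to $p\,d_1\cdots d_r$ for $\deg\phi=n\deg P$ to be coprime; choose $P$ of large prime degree), and it does produce a regular $G$-Galois extension $F/L$. But the assertion that ``taking the pole order large makes $\phi$, hence $E\to Y$, highly ramified over the fixed branch locus of $C\to\mathbf{P}^1$, so $g(E)$ is unbounded'' is wrong on both counts: your construction controls $\phi$ only over the single point under $P$, so you have no control over how $\phi$ meets the branch locus of $f$; and by Abhyankar's lemma high ramification of $\phi$ over a branch point of $f$ \emph{decreases} the ramification of $E\to Y$ there (tame indices become $e/\gcd(e,e_\phi)$), rather than increasing it. Since $\deg(E/Y)=|G|$ is fixed and $Y$ is fixed, nothing in your construction prevents $g_E$ from staying bounded as $n\to\infty$ (for instance, if $f$ has only two branch points and $\phi$ happens to be ramified over them with indices divisible by those of $f$, then $E\to Y$ is \'{e}tale). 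The fix is the same device as in the infinite case: first replace $C\to\mathbf{P}^1$ by a pullback of arbitrarily large genus with the same geometric Galois group (again Lemma \ref{lem:fiber product lemma} and Proposition \ref{prop:key galois proposition}), and then use that $E$ admits a finite morphism onto $C$, so $g_E\geq g_C$; with that repair your finite-field argument gives a self-contained proof of a case the paper only cites.
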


Our contribution is the infinite field case; over a finite ground field the result is due to Rzedowski--Calder\'{o}n and Villa--Salvador \cite{MR1120718} and \'{A}lvarez--Garc\'{i}a and Villa--Salvador \cite{MR2417184}. We think it is likely that, over a finite field, the stronger statement of Theorem \ref{thm:reduction of inverse Galois problem, curves version} (allowing the prescription of automorphism group schemes, rather than merely automorphism groups) should follow from the methods of loc. cit., but we have not checked this. \'{A}lvarez--Garc\'{i}a and Villa--Salvador \cite{MR2661451} also obtained some partial results in the direction of Theorem \ref{thm:reduction of inverse Galois problem} in the case of an infinite ground field.

We briefly survey some of the history of the weak inverse Galois problem for various fields and the related problem of constructing curves with prescribed automorphism groups. The weak inverse Galois problem for $\mathbf{Q}$ is known due to work of Wulf--Dieter \cite{MR719416}, E. Fried and Koll\'{a}r \cite{MR512465}, and M. Fried \cite{MR580989}. The weak inverse Galois problem for function fields over a base field $K$ was solved in the affirmative by Greenberg \cite{MR0379835} in the case when $K=\mathbf{C}$. This was extended to the case when $K$ is algebraically closed by Madden--Valentini \cite{MR705883}, and to the case when $K$ is finite by Rzedowski--Calder\'{o}n and Villa--Salvador \cite[Theorem 5]{MR1120718}. To the best of our knowledge, beyond these cases the weak inverse Galois problem for function fields over $K$, as well as the regular weak inverse Galois problem for $K$ and the statement of Theorem \ref{thm:second corollary to main theorem for curves}, were previously unknown. We remark that in fact Greenberg \cite{MR0379835} solved the \textit{strong} inverse Galois problem for function fields over $\mathbf{C}$. This was extended to function fields $L$ over an algebraically closed field $K$ by Stichtenoth \cite[Satz 1, Satz 4]{MR753434} under the additional assumption that the genus of $L$ is $\geq 2$, and shown in general by Madan--Rosen \cite{MR1088443}. In another direction, Poonen \cite{MR1748288} has given an explicit construction over an arbitrary field of a curve whose full group of automorphisms is trivial. This is the special case of Theorem \ref{thm:second corollary to main theorem for curves} when $G=1$ (but without the conclusion that $C$ may be taken to have arbitrarily large genus). Poonen's result is a key input in our constructions.

% \begin{theorem}\label{thm:reduction of inverse Galois problem}
%     Let $Z$ be a curve over $K$ and let $G$ be a finite group. If there exists a Galois $G$--cover of $\mathbf{P}^1_K$, then there exists a curve $E$ over $K$ and a Galois $G$--cover $E\to Z$. 
% \end{theorem}

% \begin{theorem}
%     Let $G$ be a finite group. Let $f:C\to\mathbf{P}^1$ be a finite morphism such that there exists a faithful action of $G$ on $C$ over $\mathbf{P}^1$. There exists a curve $D$ over $K$ and a finite morphism $g:D\to\mathbf{P}^1$ of degree $3\deg(f)$ such that $G\cong\Aut_K(D/\mathbf{P}^1)=\Aut_K(D)$.

%     (IS THIS INTERESTING??)
% \end{theorem}

% Some of the results of this paper are generalized in joint work with Max Lieblich \cite{BraggLieblich}. Specifically, it is shown in loc. cit. that if $K$ is any field, then there exists a point of the moduli stack of curves whose residual gerbe is any given Deligne--Mumford gerbe over $K$. Applying this to a split gerbe yields Theorem \ref{thm:first corollary to main theorem for curves}. The results of loc. cit. furthermore imply that in Theorem \ref{thm:first corollary to main theorem for curves} the curve $C$ can be chosen to have minimal field of definition equal to $K$. Due to the elementary nature of the main results of this paper, and their possible independent interest, we have chosen to write up the results of this paper separately.

\subsection{Organization of this paper}

In \S\ref{sec:automorphisms of curves}, we collect some results on the behavior of the automorphism group of a curve under certain finite morphisms and fiber products.
%In \S\ref{sec:incompressible morphisms} we define the notion of an \textit{incompressible morphism} of curves, and give a construction of morphisms with this property. 
In \S\ref{sec:group actions on curves} we construct curves which admit free actions of any given finite \'{e}tale group scheme, and also show that any curve admits pencils with a prescribed automorphism group. Finally, in \S\ref{sec:proofs of results} we give the proofs of the results stated in \S\ref{sec:intro}.

%References for automorphisms of complete intersections and curves:
%https://www.math.stonybrook.edu/robert.lazarsfeld/Reprints/Ciliberto.Lazarsfeld.Uniqueness.Lin.Series.pdf
%https://dl.acm.org/doi/pdf/10.1145/190347.190352 (On the Isomorphisrns of Smooth Algebraic Curves ,Hong Du)

\subsection{Notations}\label{ssec:notation}

%\subsection{Regular and smooth curves}

By a \textit{curve} over a field $K$ we mean a proper geometrically integral $K$-scheme of dimension 1. A curve over $K$ is \textit{regular} if its local ring at any point is a regular local ring. 
%Taking function fields induces an equivalence between the categories of regular curves over $K$ and of regular function fields $L/K$.
%This equivalence restricts to an equivalence between the subcategories of regular geometrically integral curves and the subcategory of regular field extensions $L/K$. 
A curve $C$ over $K$ is \textit{smooth} if the morphism $C\to\Spec K$ is smooth. Over any field, a smooth curve is regular. The converse holds if $K$ is perfect, but may fail in general. However, if $C$ is a regular curve over a field $K$, we may always obtain a smooth model of $C$ over an extension of $K$. Indeed, if $\overline{K}$ is an algebraic closure of $K$, then $\overline{K}$ is perfect, so the normalization $\widetilde{C}_{\overline{K}}$ of the base change $C_{\overline{K}}=C\otimes_K\overline{K}$ is a smooth curve over $\overline{K}$. We also remark that if $C$ is a regular curve over $K$ and $L/K$ is a separable field extension, then the base change $C_L$ is again regular.

% We will prove certain results for regular curves by passing to the normalization over an algebraic closure. For this, we make the following observation. Let $f:C\to D$ be a finite morphism of regular curves over $K$, and let $\widetilde{f}_{\overline{K}}:\widetilde{C}_{\overline{K}}\to\widetilde{C}_{\overline{K}}$ be the morphism of smooth models over $\overline{K}$ induced by applying the universal property of the normalization to the base change $f_{\overline{K}}:C_{\overline{K}}\to D_{\overline{K}}$. The normalization map is birational, so we have
%     \[
%         \deg(C/D)=\deg(\widetilde{C}_{\overline{K}}/\widetilde{D}_{\overline{K}}).
%     \]
If $C$ is a regular curve over a field $K$, then we write $g_C$ for the genus (arithmetic or geometric) of the smooth model $\widetilde{C}_{\overline{K}}$.
%Thus, by definition, the genus $g_C$ does not change upon passing to smooth models over an algebraic closure of the ground field.

A finite morphism $f:C\to D$ of regular curves over $K$ is \textit{Galois} if the corresponding extension $k(C)/k(D)$ of function fields is Galois. A \textit{Galois closure} of $C\to D$ is a finite morphism $C'\to C$ from a regular curve $C'$ over $K$ such that the composition $C'\to C\to D$ is Galois. A \textit{minimal Galois closure} of $C'\to C$ is a Galois closure such that $C'\to C$ has minimal degree.

The \textit{branch locus} of a finite morphism $f:C\to D$ of curves over $K$ is the locus of points in $C$ at which $f$ is not smooth. The \textit{ramification locus} of $f$ is the image in $D$ of the branch locus.

%Let $f:C\to D$ be a morphism of regular curves over $K$. The \textit{ramification locus} $R_f\subset D$ of $f$ is the reduced closed subscheme consisting of those points $P\in D$ such that the fiber $f^{-1}(P)$ is not smooth over $P$.
%A point $P\in D$ is \textit{totally ramified} under $f$ if the reduction of the fiber $f^{-1}(P)$ is a single point which $f$ maps isomorphically to $P$. The morphism $f$ is \textit{unramified} over a point $P\in D$ if $P$ is not in $R_f$.

If $\pi:X\to S$ is a morphism of schemes, we write $\Aut_S(X)$ for the group of automorphisms $\alpha$ of $X$ such that $\pi\circ\alpha=\pi$. We may write $\Aut(X)=\Aut_S(X)$ if we think the base $S$ is clear from context. Given a morphism $f:X\to Y$, we write $\Aut_S(X/Y)$ for the subgroup of $\Aut_S(X)$ consisting of those automorphisms $\alpha$ of $X$ such that $\pi\circ\alpha=\pi$ and $f\circ\alpha=f$.

If $X$ is a projective $K$--scheme, then we write $\sAut_K(X)$ for the group scheme of $K$--automorphisms of $X$. This represents the functor on the category of $K$--schemes which associates to a $K$--scheme $T$ the group $\Aut_T(X_T)$. Given a morphism $f:X\to Y$ of projective $K$--schemes, we write $\sAut_K(X/Y)\subset\sAut_K(X)$ for the sub group scheme of $K$--automorphisms of $X$ over $Y$. This represents the functor on the category of $K$--schemes which associates to a $K$--scheme $T$ the subgroup $\Aut_T(X_T/Y_T)\subset\Aut_T(X_T)$.
%consisting of automorphisms of $X_T$ which commute with the projection $X_T\to T$ and with the map $f_T:X_T\to Y_T$.
% If $X$ is a projective $K$--scheme, then we write $\sAut_K(X)$ for the group scheme of automorphisms of $X$ over $K$. This represents the functor on the category of $K$--schemes which associates to a $K$--scheme $S$ the group $\Aut_S(X_S)$ of automorphisms of $X_S=X\times S$ which commute with the projection $X_S\to S$. Given a morphism $f:X\to Y$ of $K$--schemes, we write $\sAut_K(X/Y)\subset\sAut_K(X)$ for the group scheme of automorphisms of $X$ over $Y$. This represents the functor on the category of $K$--schemes which associates to a $K$--scheme $S$ the subgroup of $\Aut_S(X_S)$ consisting of automorphisms of $X_S$ which commute with the projection $X_S\to S$ and with the map $f_S:X_S\to Y_S$.

%\subsection{Acknowledgements}
\subsection{Acknowledgments}

The author thanks Bjorn Poonen for helpful correspondence. This work was supported by NSF grant \#1840190.

% \begin{lemma}\label{lem:smooth implies regular}
%     If $C$ is a regular curve over $K$ with smooth model $\widetilde{C}_{\overline{K}}$, then we have
%     \[
%         g_{C}=g_{\widetilde{C}_{\overline{K}}}
%     \]
%     If $f:C\to D$ is a finite morphism of regular curves over $K$, and $\widetilde{f}_{\overline{K}}:\widetilde{C}_{\overline{K}}\to\widetilde{C}_{\overline{K}}$ is the morphism of smooth models induced by applying the universal property of the normalization to the base change $f_{\overline{K}}:C_{\overline{K}}\to D_{\overline{K}}$, then
%     \[
%         \deg(C/D)=\deg(\widetilde{C}_{\overline{K}}/\widetilde{D}_{\overline{K}})
%     \]
% \end{lemma}

\section{Automorphism groups and finite morphisms of curves}\label{sec:automorphisms of curves}

In this section we will identify some conditions under which automorphisms of curves descend under finite morphisms. Let $K$ be a field. Following Madden--Valentini \cite{MR705883}, we consider the following condition on a finite morphism of curves over $K$.

% \begin{theorem}
%     If $C$ is a regular curve over $K$, then the automorphism group scheme $\sAut_K(C)$ is smooth over $K$. If moreover $C$ has genus $g_C\geq 2$, then $\sAut_K(C)$ is \'{e}tale over $K$. If $f:C\to D$ is a finite separable morphism of regular curves over $K$, then $\sAut_K(C/D)$ is smooth over $K$.
% \end{theorem}

\begin{definition}\label{def:condition}
    A finite morphism $f:C\to D$ of regular curves over $K$ \textit{satisfies condition} ($\ast$) if given any regular curve $V$ over $K$ and a factorization $C\to V\to D$ of $f$ such that $V\to D$ is not an isomorphism, we have
    \[
        g_V\geq\deg(V/D)^2+2(g_D-1)\deg(V/D)+2
    \]
    where $g_D$ is the genus of $D$ and $g_V$ is the genus of $V$.
    %We say that $f$ satisfies $(\overline{\ast})$ if for every algebraically closed field $L/K$ the base change $f_L:C_L\to D_L$ satisfies $(\ast)$.
\end{definition}

Before explaining some consequences of this condition, we record the following result, known as the inequality of Castelnuovo--Severi. This result is sometimes stated only for smooth curves, but in fact with our conventions holds equally for regular curves.

\begin{theorem}[Castelnuovo--Severi]
    Let $f_1:C\to D_1$ and $f_2:C\to D_2$ be finite morphisms of regular curves over $K$ of degrees $d_1$ and $d_2$. If the induced morphism $f_1\times f_2:C\to D_1\times D_2$ is birational onto its image, then we have
    \[
        g_C\leq (d_1-1)(d_2-1)+d_1g_{D_1}+d_2g_{D_2}.
    \]
\end{theorem}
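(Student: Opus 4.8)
The plan is to reduce to the classical Castelnuovo--Severi inequality for smooth curves over an algebraically closed field, and then to prove that case by intersection theory on the surface $D_1\times D_2$. First I would pass to an algebraic closure $\overline{K}$ and replace the curves by the smooth models $C'$, $D_1'$, $D_2'$ obtained by normalizing $C_{\overline{K}}$, $(D_1)_{\overline{K}}$, $(D_2)_{\overline{K}}$; these satisfy $g_{C'}=g_C$ and $g_{D_i'}=g_{D_i}$ by our definition of the genus. Since a dominant morphism of integral curves induces a morphism of normalizations of the same degree, and degree is preserved under flat base change, $f_1$ and $f_2$ induce finite morphisms $C'\to D_i'$ of degrees $d_1$ and $d_2$. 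The key point of the reduction is that $C'\to D_1'\times D_2'$ is still birational onto its image: it factors through $(D_1\times D_2)_{\overline{K}}$, base change along $\overline{K}/K$ and composition with the birational normalization maps leave the relevant function-field extensions trivial, and in any factorization $C'\to Z\to W$ of integral curves with $C'\to W$ birational and $Z\to W$ finite the map $C'\to Z$ is again birational. So it suffices to prove the inequality when $K$ is algebraically closed and $C,D_1,D_2$ are smooth.

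In that case, I would set $S=D_1\times D_2$, a smooth projective surface with projections $p_1,p_2$, and let $\Gamma\subset S$ be the image of $\varphi=f_1\times f_2\colon C\to S$, which is birational onto $\Gamma$ by hypothesis. As $C$ is smooth, hence normal, $\varphi$ exhibits $C$ as the normalization of $\Gamma$, so $g_C\le p_a(\Gamma)$, and the adjunction formula on $S$ gives $2g_C-2\le 2p_a(\Gamma)-2=\Gamma\cdot(\Gamma+K_S)$. Writing $F_1=\{x\}\times D_2$ and $F_2=D_1\times\{y\}$ for fibers of $p_1$ and $p_2$, one has $F_1^2=F_2^2=0$, $F_1\cdot F_2=1$, $\Gamma\cdot F_1=d_1$, and $\Gamma\cdot F_2=d_2$; since $K_S\equiv (2g_{D_1}-2)F_1+(2g_{D_2}-2)F_2$ numerically, this yields $\Gamma\cdot K_S=d_1(2g_{D_1}-2)+d_2(2g_{D_2}-2)$. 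For the self-intersection I would note that $\Delta=\Gamma-d_2F_1-d_1F_2$ is numerically orthogonal to both $F_1$ and $F_2$, so, since $F_1+F_2$ has positive self-intersection, the Hodge index theorem on $S$ forces $\Delta^2\le 0$ and hence $\Gamma^2=\Delta^2+2d_1d_2\le 2d_1d_2$. Substituting these bounds into the adjunction inequality and dividing by $2$ gives
\[
    g_C-1\ \le\ d_1d_2-d_1-d_2+d_1g_{D_1}+d_2g_{D_2}\ =\ (d_1-1)(d_2-1)-1+d_1g_{D_1}+d_2g_{D_2},
\]
which is exactly the asserted bound.

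The step I expect to require the most care is the reduction: over an imperfect field one cannot simply work on $D_1\times_K D_2$ (which need not be smooth), so one must base change to $\overline{K}$, pass to normalizations, and check that the hypothesis of being birational onto the image is preserved at each stage. The surface-theoretic input — the adjunction formula and the Hodge index theorem for a smooth projective surface — is entirely standard, and one could alternatively cite the smooth-curve form of Castelnuovo--Severi from a standard reference and supply only the reduction argument above.
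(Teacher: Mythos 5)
Your proposal is correct. The reduction step is exactly the paper's: since $g$ and the degrees are defined via (and unchanged by) passing to the normalizations over $\overline{K}$, the statement for regular curves follows from the smooth case over an algebraically closed field, and your careful check that ``birational onto the image'' survives base change and normalization (the function-field argument $k(W)\subseteq k(Z)\subseteq k(C')=k(W)$) is the only point the paper leaves implicit. Where you genuinely diverge is that the paper then simply cites Kani \cite{MR758693} (with Stichtenoth \cite{MR733931} for an elementary function-field proof) for the smooth case, whereas you prove it: adjunction on $S=D_1\times D_2$ gives $2g_C-2\le\Gamma\cdot(\Gamma+K_S)$ via $g_C\le p_a(\Gamma)$ (using that $\varphi$ is finite, so $C$ is the normalization of $\Gamma$), the fiber-class computation gives $\Gamma\cdot K_S=d_1(2g_{D_1}-2)+d_2(2g_{D_2}-2)$, and the Hodge index theorem applied to $\Delta=\Gamma-d_2F_1-d_1F_2$ (orthogonal to $F_1,F_2$, with $(F_1+F_2)^2>0$) gives $\Gamma^2\le 2d_1d_2$; all of these steps check out and assemble to the stated bound. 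This is the standard intersection-theoretic proof, essentially the content of the cited references, so your argument buys self-containedness at the cost of invoking adjunction and Hodge index, while the paper opts for brevity by outsourcing the smooth case; either route is acceptable here.
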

\begin{proof}
    When $C,D_1,$ and $D_2$ are smooth over $K$, this is explained by Kani \cite[Proposition, pg. 26]{MR758693}. The genera of curves and the degrees of finite morphisms are unchanged upon passing to the normalization over an algebraic closure, so the regular case follows from the smooth case. We refer also to Stichtenoth \cite[Satz 1]{MR733931} for an elementary proof in the language of function fields.
    %See \cite[Exercise VIII.C-1]{MR770932}. For a proof and historical discussion, we refer also to Kani \cite{MR758693} and the associated references.
    
    %Indeed, let $\widetilde{C}_{\overline{K}}\to C_{\overline{K}}$ denote the normalization of $C_{\overline{K}}$, and similarly define $\widetilde{D}_{i,\overline{K}}$ for $i=1,2$. Then $g_C$ is equal to the genus of $\widetilde{C}_{\overline{K}}$, and similarly for the $g_{D_i}$. The morphisms $f_{1,\overline{K}}$ and $f_{2,\overline{K}}$ induce maps $\widetilde{f}_{i,\overline{K}}:\widetilde{C}_{\overline{K}}\to D_{i,\overline{K}}$. The normalization maps are birational, so $\deg(f_i)=\deg(\widetilde{f}_i)$. We therefore obtain the desired inequality from the smooth case.
\end{proof}

We apply the inequality of Castelnuovo--Severi to deduce the following consequence of condition $(\ast)$ (compare to Madden--Valentini \cite[Lemma 1]{MR705883}).
 
%which is essentially \cite[Lemma 2]{MR1120718}. 

\begin{proposition}\label{prop:CSI consequence}
    %Suppose that $K$ is a perfect field. 
    Let $f:C\to D$ be a finite morphism of regular curves over $K$ which satisfies condition $(\ast)$.
    % $f_1$ satisfies the following condition: given any curve $E$ and factorization $C_1\to E\to D_1$ of $f_1$ such that $E\to D_1$ is not an isomorphism, we have
    % \[
    %     g_E\geq d^2+2(g-1)d+2
    % \]
    % where $g_E$ is the genus of $E$ and $d=\deg(E/D_1)$. 
    If $\alpha\in\Aut_K(C)$ is an automorphism, then there exists a unique automorphism $\beta\in\Aut_K(D)$ such that the diagram
    \[
        \begin{tikzcd}
            C\arrow{r}{\alpha}[swap]{\sim}\arrow{d}[swap]{f}&C\arrow{d}{f}\\
            D\arrow{r}{\beta}[swap]{\sim}&D
        \end{tikzcd}
    \]
    commutes.
\end{proposition}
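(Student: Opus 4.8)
The plan is to produce $\beta$ by showing that $\alpha$ permutes the fibers of $f$ in a way compatible with the structure morphism, which amounts to finding a common factorization. Consider the morphism $(f, f\circ\alpha^{-1})\colon C\to D\times D$. Let $V$ be the normalization of the image (a regular curve over $K$, after passing to the regular locus, since $C$ is regular and we may take the normalization of the scheme-theoretic image in $D\times D$); then $C\to V$ is finite and $(f,f\circ\alpha^{-1})$ factors as $C\to V\to D\times D$ with $C\to V$ birational onto — in fact dominant onto — $V$, and $V\to D\times D$ birational onto its image. Composing with the two projections gives finite morphisms $p_1\colon V\to D$ and $p_2\colon V\to D$ through which $f$ and $f\circ\alpha^{-1}$ factor respectively, and $p_1\times p_2\colon V\to D\times D$ is birational onto its image. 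The goal is to show $V\to D$ (say via $p_1$) is an isomorphism; then $f$ factors through an isomorphism $V\xrightarrow{\sim} D$, and chasing the diagram produces $\beta$ with $\beta\circ f = f\circ\alpha^{-1}$ up to relabeling, hence $f\circ\alpha = \beta^{-1}\circ f$ as desired.

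To force $V\to D$ to be an isomorphism, apply the inequality of Castelnuovo--Severi to $p_1\times p_2\colon V\to D\times D$: with $d_1 = \deg(p_1)$, $d_2 = \deg(p_2)$, and noting $d_1 = d_2 =: d$ since both are quotients of the fixed morphism $f$ of degree, say, $n$, with $C\to V$ of degree $n/d$, we get
\[
    g_V \leq (d-1)^2 + 2 d\, g_D.
\]
On the other hand, since $C\to V\to D$ is a factorization of $f$, condition $(\ast)$ says that \emph{if} $V\to D$ is not an isomorphism then
\[
    g_V \geq d^2 + 2(g_D-1)d + 2 = d^2 + 2 d\, g_D - 2d + 2.
\]
Comparing the two bounds: $d^2 + 2 d\, g_D - 2d + 2 \leq (d-1)^2 + 2 d\, g_D = d^2 - 2d + 1 + 2 d\, g_D$, i.e. $2 \leq 1$, a contradiction. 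Hence $V\to D$ must be an isomorphism. (One should double-check the edge case $d=1$, where $C\to V$ has degree $n$ and $V\to D$ is birational, hence already an isomorphism of regular curves; and note that if $\alpha = \mathrm{id}$ then $\beta = \mathrm{id}$ trivially, and more generally the construction is symmetric in the two projections.)

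Granting that $p_1\colon V\xrightarrow{\sim} D$, define $\beta$ as follows. We have $f = p_1\circ q$ and $f\circ\alpha^{-1} = p_2\circ q$ where $q\colon C\to V$; since $p_1$ is an isomorphism, $q = p_1^{-1}\circ f$, so $f\circ\alpha^{-1} = p_2\circ p_1^{-1}\circ f$. Set $\beta := p_2\circ p_1^{-1}$, an endomorphism of $D$; it is finite of degree $d_2/d_1 = 1$, hence an automorphism of the regular curve $D$ (a degree-one finite morphism of regular curves is an isomorphism). Then $\beta\circ f = f\circ\alpha^{-1}$, equivalently $f\circ\alpha = \beta^{-1}\circ f$; replacing $\beta$ by $\beta^{-1}$ gives the stated commutativity $\beta\circ f = f\circ\alpha$. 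Uniqueness of $\beta$ is immediate: $f$ is finite, hence dominant (indeed surjective), so faithfully flat is not needed — an epimorphism in the category of reduced schemes — and any two $\beta,\beta'$ with $\beta\circ f = \beta'\circ f = f\circ\alpha$ agree on the dense image of $f$, hence agree. The main obstacle is the bookkeeping in the first step: constructing $V$ cleanly as a regular curve with the required factorization and birationality-onto-image property, and checking that $\deg p_1 = \deg p_2$ so that the Castelnuovo--Severi estimate collides with condition $(\ast)$ by exactly the margin $2 \leq 1$; everything after that is formal.
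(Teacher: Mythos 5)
Your argument is correct and follows essentially the same route as the paper: normalize the image of $C$ in $D\times D$ under $(f, f\circ\alpha^{\pm 1})$, apply Castelnuovo--Severi to the two projections of $V$, and note that the resulting bound $g_V\leq(d-1)^2+2dg_D$ contradicts condition $(\ast)$ unless $V\to D$ is an isomorphism, after which $\beta$ is read off from the graph. The only differences (using $\alpha^{-1}$ in place of $\alpha$ and writing $\beta=p_2\circ p_1^{-1}$ rather than invoking the graph directly) are cosmetic.
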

\begin{proof}
    Let $V$ be the normalization of the image of the map $f\times (f\circ\alpha)$. For $i=1,2$ let $\pi_i:D\times D\to D$ denote the projection onto the $i$th factor. Consider the diagram
    \[
        \begin{tikzcd}
            C\arrow{r}\arrow{dr}&V\arrow{d}{p_i}\arrow{r}&D\times D\arrow{dl}{\pi_i}\\
            &D.&
        \end{tikzcd}
    \]
    Applying the inequality of Castelnuovo--Severi to the maps $p_i:V\to D$ we obtain
    \[
        g_V\leq (\deg(V/D)-1)^2+2\deg(V/D)g_D.
    \]
    But this violates the inequality in condition $(\ast)$ applied to the factorization $C\to V\xrightarrow{p_1}D$ of $f$. Therefore $p_1$ is an isomorphism. It follows that $V\to D\times D$ is a closed immersion, and $V$ is the graph of the desired automorphism $\beta$ of $D$.
\end{proof}

\begin{corollary}\label{cor:SES of automorphism groups, just sets}
    If $f:C\to D$ is a finite morphism of regular curves over $K$ which satisfies $(\ast)$, then there is a natural exact sequence
    \[
        1\to\Aut_K(C/D)\to\Aut_K(C)\to\Aut_K(D).
    \]
\end{corollary}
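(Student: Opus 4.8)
The plan is to build the exact sequence from Proposition~\ref{prop:CSI consequence}. Given the finite morphism $f\colon C\to D$ satisfying $(\ast)$, define a map $\Aut_K(C)\to\Aut_K(D)$ by sending $\alpha$ to the unique $\beta$ provided by Proposition~\ref{prop:CSI consequence} making the square commute. The first step is to check that this assignment is a group homomorphism: if $\alpha_1,\alpha_2$ induce $\beta_1,\beta_2$ respectively, then $\beta_1\circ\beta_2$ fits into a commuting square with $\alpha_1\circ\alpha_2$, so by the uniqueness clause of Proposition~\ref{prop:CSI consequence} it must be the automorphism of $D$ associated to $\alpha_1\circ\alpha_2$. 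Similarly the identity of $C$ induces the identity of $D$ by uniqueness. Naturality is immediate from the construction, since the map records a commutative square over $D$.

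The second step is to identify the kernel. By definition, $\alpha\in\Aut_K(C)$ lies in the kernel precisely when the associated $\beta$ is $\mathrm{id}_D$, i.e.\ when $f\circ\alpha=\mathrm{id}_D\circ f=f$; but this is exactly the condition that $\alpha\in\Aut_K(C/D)$. Conversely any $\alpha\in\Aut_K(C/D)$ satisfies $f\circ\alpha=f$, so the square commutes with $\beta=\mathrm{id}_D$, and by uniqueness that is the induced automorphism. Hence $\Aut_K(C/D)$ is exactly the kernel, and we obtain the exact sequence
\[
    1\to\Aut_K(C/D)\to\Aut_K(C)\to\Aut_K(D).
\]

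I do not expect any real obstacle here: the content is entirely contained in Proposition~\ref{prop:CSI consequence}, and the only thing to verify is that the map it produces is a homomorphism with the expected kernel. The one point deserving a word of care is that one must invoke the \emph{uniqueness} part of Proposition~\ref{prop:CSI consequence} at each step (to get multiplicativity and to pin down the kernel), rather than merely the existence of a compatible $\beta$; this is what makes the argument go through cleanly. Note that we do not claim surjectivity onto $\Aut_K(D)$, which is why the sequence is only left-exact.
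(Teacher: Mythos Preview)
Your proof is correct and follows exactly the same approach as the paper: define the map $\Aut_K(C)\to\Aut_K(D)$ via Proposition~\ref{prop:CSI consequence} and observe that its kernel is $\Aut_K(C/D)$. In fact you supply more detail than the paper does, explicitly checking multiplicativity via the uniqueness clause, which the paper leaves implicit.
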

\begin{proof}
    We define the right hand map by sending an automorphism $\alpha\in\Aut_K(C)$ to the automorphism $\beta$ produced in Proposition \ref{prop:CSI consequence}. It is immediate that the kernel of this map is the subgroup $\Aut_K(C/D)$.
\end{proof}

The following result shows that, under some mild assumptions, the exact sequence of Corollary \ref{cor:SES of automorphism groups, just sets} extends to an exact sequence of group schemes.

\begin{corollary}\label{cor:SES of automorphism groups}
    Let $f:C\to D$ be a finite morphism of regular curves over $K$ which satisfies $(\ast)$. Suppose that either $\sAut_K(C)$ is smooth or that $D$ is smooth and $g_D\geq 2$. There is a natural exact sequence
    \[
        1\to\sAut_K(C/D)\to\sAut_K(C)\to\sAut_K(D)
    \]
    of $K$-group schemes.
\end{corollary}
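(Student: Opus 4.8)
The strategy is to upgrade the exact sequence of sets from Corollary \ref{cor:SES of automorphism groups, just sets} to an exact sequence of group schemes by checking it on $T$-points for every $K$-scheme $T$, or equivalently, by a base-change argument combined with a representability/smoothness input. First I would observe that the map $\sAut_K(C/D)\to\sAut_K(C)$ is a closed immersion by definition of the sub group scheme, so the only real content is producing a homomorphism $\sAut_K(C)\to\sAut_K(D)$ of group schemes whose kernel is exactly $\sAut_K(C/D)$. Under either hypothesis, the plan is to reduce to the case where the source group scheme is smooth — in the first case this is assumed, and in the second case ($D$ smooth, $g_D\geq 2$) one knows $\sAut_K(D)$ is étale (indeed finite étale, the classical Hurwitz/Schmid bound and the rigidity of automorphisms of curves of genus $\geq 2$), and I would want to leverage this to control $\sAut_K(C)$; alternatively, in that case the formation of the homomorphism is forced on the level of reduced subschemes and one argues separately that nothing is lost.

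The heart of the argument: for a $K$-scheme $T$, an element of $\sAut_K(C)(T) = \Aut_T(C_T)$ is an automorphism $\alpha_T$ of $C_T$ over $T$. I want to produce a canonical $\beta_T \in \Aut_T(D_T)$ making the square commute. The subtlety is that Proposition \ref{prop:CSI consequence} is a statement about automorphisms of $C$ itself over the base field, not about automorphisms of $C_T$ over $T$; the Castelnuovo--Severi inequality and condition $(\ast)$ are geometric statements about curves, not about families. So the mechanism I would use is: the locus in $\sAut_K(C)$ (or in the Weil restriction / appropriate Hom-scheme) where a given automorphism descends along $f$ is a closed (and open) condition, and it contains all the geometric points by Proposition \ref{prop:CSI consequence} applied over each algebraically closed field; hence if $\sAut_K(C)$ is reduced — which holds when it is smooth — this locus is all of $\sAut_K(C)$, and one gets the descended automorphism functorially by spreading out. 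Concretely, I would consider the graph construction from the proof of Proposition \ref{prop:CSI consequence} in families: given $\alpha \in \sAut_K(C)(T)$, form the map $f_T \times (f_T\circ\alpha)\colon C_T \to D_T\times_T D_T$, and argue that its scheme-theoretic image is, fppf-locally on $T$, the graph of an automorphism of $D_T$ — this uses that it is so on geometric fibers plus flatness/properness to conclude in the family. This produces $\beta_T$, and the assignment $\alpha\mapsto\beta$ is visibly a group homomorphism and a morphism of schemes, with kernel $\sAut_K(C/D)$ by construction.

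For the case $D$ smooth and $g_D\geq 2$ without assuming $\sAut_K(C)$ smooth, I would instead argue as follows: since $g_D \geq 2$ and $D$ is smooth, $\sAut_K(D)$ is finite étale, and moreover any two automorphisms of $D_T$ over a connected $T$ that agree at one geometric point agree everywhere (rigidity). Then the set-theoretic map of Corollary \ref{cor:SES of automorphism groups, just sets}, applied fiberwise over $T$, automatically glues to a morphism $\sAut_K(C)(T)\to\sAut_K(D)(T) = \sAut_K(D)(\pi_0(T))$ because the target has no infinitesimal or positive-dimensional ambiguity; functoriality in $T$ is immediate from uniqueness in Proposition \ref{prop:CSI consequence}. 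I expect the main obstacle to be the first case: making rigorous the claim that ``descent of the automorphism along $f$'' is represented by a closed-and-open subscheme of $\sAut_K(C)$, which requires knowing that the scheme-theoretic image of $f_T\times(f_T\circ\alpha)$ behaves well in families (it need not commute with base change in general), so one likely wants to first establish that $p_1\colon V\to D$ being an isomorphism is an open condition on the base and then that it is also closed because it holds on all geometric fibers by the genus estimate — the smoothness (hence reducedness) of $\sAut_K(C)$ is then exactly what converts ``holds on all geometric points'' into ``holds scheme-theoretically.''
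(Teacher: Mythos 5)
Your overall strategy---extend the set-level map of Corollary \ref{cor:SES of automorphism groups, just sets} to group schemes by checking a factorization condition on points, using reducedness of $\sAut_K(C)$ in the first case and rigidity/\'{e}taleness in the second---is the same as the paper's. But the central step, actually producing the morphism of schemes $\sAut_K(C)\to\sAut_K(D)$, is where your proposal has a genuine gap, and you flag it yourself: your construction goes through the scheme-theoretic image of $f_T\times(f_T\circ\alpha)$ in $D_T\times_T D_T$, and this need not commute with base change or vary well in families, so both the claim that the ``descent locus'' is a closed subscheme of $\sAut_K(C)$ and the claim that the descended automorphism exists ``functorially by spreading out'' are left unproved. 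The paper avoids the relative graph construction entirely: postcomposition with $f$ defines a morphism of schemes $\sAut_K(C)\to\sHom^o_K(C,D)$ (formal from the functorial description of the Hom scheme), while precomposition with $f$ embeds $\sAut_K(D)$ as a closed subscheme of $\sHom^o_K(C,D)$. The descent locus is then literally the preimage of a closed subscheme, hence closed with no further argument, and once the composite is known to factor through $\sAut_K(D)$, the factorization through a monomorphism is unique, is the desired homomorphism, and has kernel $\sAut_K(C/D)$ by construction. You mention the ``appropriate Hom-scheme'' parenthetically, but this is exactly the device that dissolves the obstacle you identify, and your argument as written does not use it.

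Two further points where your sketch is thinner than the paper's proof. First, in the reduced case the paper passes to a separable closure and checks the factorization only on $K$-points, which are dense because $\sAut_K(C)$ is geometrically reduced; your plan to apply Proposition \ref{prop:CSI consequence} ``over each algebraically closed field'' quietly requires condition $(\ast)$ and regularity to survive base change along possibly inseparable extensions, which is not automatic over imperfect fields and is not addressed. Second, in the case $D$ smooth with $g_D\geq 2$, your claim that the fiberwise assignment $\alpha_t\mapsto\beta_t$ ``automatically glues'' to a morphism because $\sAut_K(D)$ is finite \'{e}tale is not automatic: a fiberwise-defined family of points of an \'{e}tale scheme need not come from a $T$-point without an algebraicity input. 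The paper gets this for free because under these hypotheses the whole Hom scheme $\sHom^o_K(C,D)$, not merely $\sAut_K(D)$, is \'{e}tale over $K$, so the already-constructed scheme morphism $\sAut_K(C)\to\sHom^o_K(C,D)$ factors through the open-and-closed subscheme $\sAut_K(D)$ as soon as it does so on points, with no reducedness of $\sAut_K(C)$ needed. In short: right skeleton, but the missing idea is the use of $\sHom^o_K(C,D)$ with the pre/post-composition maps, which is what makes both the closedness and the gluing formal.
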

\begin{proof}
    We will extend the map $\Aut_K(C)\to\Aut_K(D)$ produced in Corollary \ref{cor:SES of automorphism groups, just sets} to a map of group schemes. Let $\sHom^o_K(C,D)$ be the Hom scheme parametrizing finite morphisms $C\to D$, and consider the closed immersion $\sAut_K(D)\subset\sHom^o_K(C,D)$ of schemes given by precomposition with $f$. We claim that the map $\sAut_K(C)\to\sHom^o_K(C,D)$ given by postcomposition with $f$ factors through the closed subscheme $\sAut_K(D)$. To verify this we may assume $K$ is separably closed. If $\sAut_K(C)$ is geometrically reduced, it will further suffice to verify that this is true at the level of $K$--points, which follows from Proposition \ref{prop:CSI consequence}. If $D$ is smooth and $g_D\geq 2$, then $\sHom^o_K(C,D)$ and $\sAut_K(D)$ are \'{e}tale over $K$, and we draw the same conclusion. Thus, in either case we obtain the desired map $\sAut_K(C)\to\sAut_K(D)$. It follows from the construction that the kernel of this map is the subgroup $\sAut_K(C/D)\subset\sAut_K(C)$.
    % We will construct the map $\sAut_K(C)\to\sAut_K(D)$. At the level of $K$--points, we have a map which sends an automorphism $\alpha$ of $C$ to the automorphism $\beta$ of $D$ produced by Proposition \ref{prop:CSI consequence}. To extend this map to a morphism of group schemes, let $\sHom^o_K(C,D)$ be the Hom scheme parametrizing finite morphisms $C\to D$, and consider the closed immersion $\sAut_K(D)\subset\sHom^o_K(C,D)$ of schemes given by precomposition with $f$. We claim that the map $\sAut_K(C)\to\sHom^o_K(C,D)$ given by postcomposition with $f$ factors through the closed subscheme $\sAut_K(D)$. To verify this we may assume $K$ is separably closed. If $\sAut_K(C)$ is geometrically reduced, it will further suffice to verify that this is true at the level of $K$--points, which follows from Proposition \ref{prop:CSI consequence}. If $D$ is smooth and $g_D\geq 2$, then $\sHom^o_K(C,D)$ and $\sAut_K(D)$ are \'{e}tale over $K$, and we draw the same conclusion. Thus, in either case we obtain the desired map $\sAut_K(C)\to\sAut_K(D)$. It follows from the construction that the kernel of the map $\sAut_K(C)\to\sAut_K(D)$ is the subgroup $\sAut_K(C/D)\subset\sAut_K(C)$.
\end{proof}

\begin{remark}
    The assumption in Corollary \ref{cor:SES of automorphism groups} that $\sAut_K(C)$ is smooth is automatic if either $K$ has characteristic 0 or $C$ is smooth over $K$.
\end{remark}

We next consider fiber products of finite morphisms of curves. We record the following lemma.

\begin{lemma}\label{lem:smoothness of fiber product lemma}
    Let $f:C\to D$ and $g:E\to D$ be finite separable morphisms of regular curves over $K$ and set $F:=C\times_DE$.
    \begin{enumerate}
        \item\label{item:smooth lemma 1} $F$ is geometrically reduced, and is regular if and only if the ramification loci of $f$ and $g$ are disjoint (as subschemes of $D$).
        \item\label{item:smooth lemma 2} If $F$ is regular, then $F$ is geometrically integral (and hence is a regular curve) if and only if it is geometrically connected.
        \item\label{item:smooth lemma 3} If $C,D,$ and $E$ are smooth, then $F$ is smooth if and only if it is regular. 
    %and if in addition $C,D,$ and $E$ are smooth, then $F$ is smooth as well.
    \end{enumerate}
\end{lemma}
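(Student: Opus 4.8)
The plan is to analyze the fiber product $F = C\times_D E$ étale-locally on $D$, using that both $f$ and $g$ are finite and separable. Since regularity, geometric reducedness, and (given regularity) smoothness can all be checked after faithfully flat base change, and since these questions are local on the source, I would work in complete local rings. Let $y\in D$ be a closed point with complete local ring $\widehat{\mathcal{O}}_{D,y}$; the fibers of $F$ over $y$ correspond to pairs of points $x\in C$, $z\in E$ above $y$, and the completed local ring of $F$ at such a point is $\widehat{\mathcal{O}}_{C,x}\otimes_{\widehat{\mathcal{O}}_{D,y}}\widehat{\mathcal{O}}_{E,z}$.

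For part \eqref{item:smooth lemma 1}: separability of $f$ means $\widehat{\mathcal{O}}_{C,x}/\widehat{\mathcal{O}}_{D,y}$ is generically étale, i.e. the extension of fraction fields is separable, and similarly for $g$. First I would check geometric reducedness: the total ring of fractions of $F$ is a product of fields, each a compositum $k(C)_x\otimes_{k(D)_y}k(E)_z$ of separable extensions of the complete (hence excellent) field $k(D)_y$, so it is a finite product of separable field extensions of $k(D)_y$ — hence geometrically reduced; and $F$, being finite flat over the regular $1$-dimensional $D$, is Cohen--Macaulay, so it has no embedded points and reducedness of the total fraction ring propagates to $F$ itself. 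For regularity: if $y$ lies in the ramification locus of neither $f$ nor $g$, then both $C$ and $E$ are étale over $D$ near $y$, so $F$ is étale over $D$ there and hence regular. Conversely, if $y$ is in the ramification locus of both — say $x\in C$ and $z\in E$ both ramify over $y$ — I would exhibit a singular point of $F$: choosing a uniformizer $t$ at $y$, we have $t = u\, s^e$ in $\widehat{\mathcal{O}}_{C,x}$ and $t = v\, w^{e'}$ in $\widehat{\mathcal{O}}_{E,z}$ with $e, e'\geq 2$ (in the tame case, and a similar but more careful analysis using that the extensions are separable in the wild case), and the tensor product $\widehat{\mathcal{O}}_{C,x}\otimes_{\widehat{\mathcal{O}}_{D,y}}\widehat{\mathcal{O}}_{E,z}$ is then visibly not a regular local ring (e.g. its maximal ideal needs at least two generators while it has dimension one). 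Since the ramification loci are closed subschemes of $D$ supported on finite sets of points, "disjoint" is exactly the condition that no point of $D$ ramifies under both maps, which matches.

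For part \eqref{item:smooth lemma 2}: a regular curve in the sense of this paper must be proper, geometrically integral, of dimension $1$; $F$ is automatically proper over $K$ (finite over $D$ which is proper) and of pure dimension $1$ (finite flat over $D$). Assuming $F$ regular, it is in particular reduced and geometrically reduced by part \eqref{item:smooth lemma 1}, and a normal (regular implies normal) curve that is geometrically connected and geometrically reduced is geometrically integral — geometric connectedness plus geometric normality (which follows from regularity plus geometric reducedness, as regularity of a curve is equivalent to normality and this passes to the geometric fiber once we know geometric reducedness) forces geometric irreducibility. So geometric integrality is equivalent to geometric connectedness. Conversely geometric integrality trivially implies geometric connectedness.

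For part \eqref{item:smooth lemma 3}: when $C$, $D$, $E$ are smooth over $K$, one direction (smooth implies regular) holds over any field. For the converse, suppose $F$ is regular; I would like to conclude $F$ is smooth over $K$. Here I would use that $F\to D$ is finite flat and, by part \eqref{item:smooth lemma 1}, $F$ is regular exactly when it is étale over $D$ (the ramification loci being disjoint forces $F\to D$ to be unramified, hence étale since it is already flat); then $F\to D$ étale and $D\to\operatorname{Spec}K$ smooth give $F\to\operatorname{Spec}K$ smooth by composition. (One subtlety: I should double-check that disjointness of ramification loci really does make $F\to D$ everywhere étale, including at points where exactly one of $f,g$ ramifies — at such a point one factor is étale over $\widehat{\mathcal{O}}_{D,y}$ so the tensor product is étale over the other factor's local ring, which need not be regular over $D$, but is regular in itself only if... — actually this needs care, and is exactly where I expect the main obstacle.)

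The main obstacle is the precise ramification analysis in part \eqref{item:smooth lemma 1}, specifically the case of wild ramification over an imperfect residue field, where one cannot simply write $t$ as a unit times a power of a uniformizer. There one must argue more carefully with the structure of the complete local rings and the separability hypothesis, perhaps reducing to the statement that for finite separable extensions $A/R$, $B/R$ of complete DVRs with the same residue characteristic, $A\otimes_R B$ is regular if and only if at least one of $A/R$, $B/R$ is unramified. I would isolate this as the key local lemma and prove it by a direct computation with the different, using that $\Omega_{A/R}$ is torsion (by separability) and supported exactly on the ramification, so that $\Omega_{(A\otimes_R B)/R}$ and hence the singularities of $\operatorname{Spec}(A\otimes_R B)$ are controlled by whether the ramification of the two factors overlaps.
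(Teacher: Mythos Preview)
There are two genuine gaps. In part \eqref{item:smooth lemma 3}, the assertion that $F$ regular forces $F\to D$ to be \'etale is false: if $f$ ramifies over some $y\in D$ but $g$ does not, then $F\to D$ is still ramified over $y$, yet $F$ is regular there because $F\to C$ (the base change of the unramified $g$) is \'etale and $C$ is regular. You sensed trouble but did not locate the fix. The correct argument is immediate once seen: disjointness of the ramification loci means that at every point of $F$ at least one of the projections $F\to C$, $F\to E$ is \'etale, and since $C$ and $E$ are smooth, $F$ is smooth. The paper argues slightly differently: it base-changes to $\overline{K}$, notes that smoothness of $C,D,E$ keeps them regular over $\overline{K}$ and that disjointness of ramification persists, and re-applies part \eqref{item:smooth lemma 1} to conclude that $F_{\overline{K}}$ is regular. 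In part \eqref{item:smooth lemma 2}, your parenthetical that regularity together with geometric reducedness yields geometric normality is false over imperfect fields: the affine curve $y^2=x^p+a$ over $\mathbf{F}_p(a)$ with $p$ odd is regular and geometrically reduced but acquires a cusp over the algebraic closure. The paper avoids this by passing only to the \emph{separable} closure $K^s$, where regularity is genuinely preserved; since $\overline{K}/K^s$ is purely inseparable, the map on spectra is a universal homeomorphism and irreducibility over $K^s$ transports to $\overline{K}$.

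By contrast, your declared ``main obstacle'' --- wild ramification in part \eqref{item:smooth lemma 1} --- is not one: in any complete DVR with uniformizer $s$, the image of a uniformizer $t$ of $R$ is automatically a unit times $s^e$, tame or not, so the cotangent computation (the relation $us^e=vw^{e'}$ lies in $\mathfrak{n}^2$ once $e,e'\geq 2$, leaving $s$ and $w$ independent in $\mathfrak{n}/\mathfrak{n}^2$) goes through. Your local-ring approach to \eqref{item:smooth lemma 1} is otherwise sound and is a reasonable alternative to the paper's, which instead embeds $F\subset C\times E$ and identifies the singular locus directly as the intersection of $F$ with the product of the two branch loci.
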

\begin{proof}
    Considering $F$ as a closed subscheme of $C\times E$, the singular locus of $F$ is the intersection of $F$ with the product of the branch locus of $f$ and the branch locus of $g$. Thus $F$ is regular if and only if this intersection is empty, which is equivalent to the ramification loci of $f$ and $g$ being disjoint. Let $\overline{K}$ be an algebraic closure of $K$. By assumption, $C,D,$ and $E$ are geometrically integral, hence generically smooth over $K$. Furthermore, $f$ and $g$ are separable, hence generically \'{e}tale. Applying the above description of the singular locus after base change to $\overline{K}$, we see that $F_{\overline{K}}$ is generically smooth, hence generically reduced. As $F_{\overline{K}}\to C_{\overline{K}}$ is flat, we conclude that $F_{\overline{K}}$ is reduced. Therefore $F$ is geometrically reduced, which completes the verification of~\eqref{item:smooth lemma 1}.
    
    Suppose that $F$ is regular and is geometrically connected. Then $F$ is integral. The base change of $F$ to a separable closure of $K$ remains regular and connected, hence is also integral. But an integral and geometrically reduced scheme over a separably closed field is automatically geometrically integral, so $F$ is geometrically integral, which proves~\eqref{item:smooth lemma 2}.

    Finally we show~\eqref{item:smooth lemma 3}. If $F$ is regular, then by~\eqref{item:smooth lemma 1} the ramification loci of $f$ and $g$ are disjoint. This condition persists after base change to $\overline{K}$. Furthermore, as $C,D,$ and $E$ are smooth, they remain regular after base change to $\overline{K}$. Thus $F_{\overline{K}}$ is regular, so $F$ is smooth.
    %Let $\overline{K}$ be an algebraic closure of $K$. By assumption, $C,D,$ and $E$ are geometrically reduced, so $F_{\overline{K}}$ is generically smooth, and is moreover flat over $C_{\overline{K}}$, and therefore is reduced. If $C,D,$ and $E$ are smooth, then they remain regular after base change to $\overline{K}$. Thus if the ramification loci of $f$ and $g$ are disjoint, $F$ is geometrically regular, hence smooth. 
\end{proof}

Let $C,D,E,$ and $F$ be regular curves over $K$. Consider a commutative diagram
\begin{equation}\label{eq:a cartesian square}
    \begin{tikzcd}
        F\arrow{d}[swap]{g'}\arrow{r}{f'}&E\arrow{d}{g}\\
        C\arrow{r}{f}&D
    \end{tikzcd}
\end{equation}
over $K$, where $f,g,f',$ and $g'$ are finite separable morphisms. We consider the following condition.

\begin{definition}\label{def:condition star star}
    We say that the diagram~\eqref{eq:a cartesian square} \textit{satisfies condition} $(\ast\ast')$ if (1) it is Cartesian, and (2) there exists a minimal Galois closure $C'\to C$ of $C\to D$ such that the fiber product $C'\times_DE$ is connected.

    We say that~\eqref{eq:a cartesian square} \textit{satisfies condition} $(\ast\ast)$ if its base change to a separable closure of $K$ satisfies condition $(\ast\ast')$.
\end{definition}

The following two results give some consequences of condition $(\ast\ast)$.

\begin{proposition}\label{prop:key galois proposition}
    If the square~\eqref{eq:a cartesian square} satisfies condition $(\ast\ast)$, then pullback along $g$ induces an isomorphism 
    \[
        \sAut_K(C/D)\iso\sAut_K(F/E).
    \]
\end{proposition}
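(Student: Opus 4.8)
The plan is to exhibit the pullback map as a homomorphism of group schemes, reduce the assertion that it is an isomorphism to a statement about automorphisms of function-field extensions over a separably closed field, and then settle that statement by Galois theory, with condition $(\ast\ast')$ supplying a Galois closure over which the relevant fiber product is well behaved. Concretely, for a $K$-scheme $T$, an automorphism of $C_T$ over $D_T$ base-changes along $E_T\to D_T$ to an automorphism of $F_T=C_T\times_{D_T}E_T$ over $E_T$; this uses that the square~\eqref{eq:a cartesian square} is Cartesian, which follows from $(\ast\ast)$ because being Cartesian can be checked after faithfully flat base change. This gives the pullback map $\varphi\colon\sAut_K(C/D)\to\sAut_K(F/E)$. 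Since being an isomorphism of schemes is likewise insensitive to the faithfully flat base change $\Spec K^{\mathrm{sep}}\to\Spec K$, I may and do assume $K=K^{\mathrm{sep}}$ and that the square satisfies $(\ast\ast')$.

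Next I would note that $\sAut_K(C/D)$ and $\sAut_K(F/E)$ are finite étale over $K$: the tangent space to $\sAut_K(C/D)$ at the identity is the space of global sections of the relative tangent sheaf of $C/D$, which vanishes because $\Omega_{C/D}$ is torsion (as $f$ is finite) while $\mathcal{O}_C$ is torsion-free, so $\sAut_K(C/D)$ is unramified, hence finite étale, over $K$; the same holds for $\sAut_K(F/E)$ since $F\to E$, being a base change of $f$, is finite. Over $K=K^{\mathrm{sep}}$ these group schemes are constant, equal to $\Aut(k(C)/k(D))$ and $\Aut(k(F)/k(E))$ respectively, where $k(F)=k(C)\otimes_{k(D)}k(E)$ is a field because $F$ is integral; so it remains only to show that $\varphi$ is bijective on these groups. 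Injectivity is formal: $g$ is finite and faithfully flat, hence so is the projection $F\to C$, which is therefore an epimorphism, so an automorphism of $C$ over $D$ pulling back to $\mathrm{id}_F$ must be $\mathrm{id}_C$.

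For surjectivity I would invoke $(\ast\ast')$ to pick a minimal Galois closure $\hat C\to C$ of $C\to D$ with $\hat C\times_D E$ connected, and set $H:=\Aut(k(\hat C)/k(D))$ and $H_C:=\Aut(k(\hat C)/k(C))\le H$, so $k(C)=k(\hat C)^{H_C}$ and $\Aut(k(C)/k(D))=N_H(H_C)/H_C$. The key claim is that $\hat C\times_D E$ is a regular curve. Granting this, $\hat C\times_D E\to E$ is Galois with group $H$: it is the base change of the Galois morphism $\hat C\to D$, so $H$ acts over $E$ with quotient $E$, and $k(\hat C\times_D E)=k(\hat C)\otimes_{k(D)}k(E)$ has degree $|H|$ over $k(E)$. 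Since forming $H_C$-invariants commutes with the flat base change along $k(D)\hookrightarrow k(E)$, we get $k(F)=k(C)\otimes_{k(D)}k(E)=k(\hat C\times_D E)^{H_C}$, so $k(F)$ is the intermediate field of the Galois extension $k(\hat C\times_D E)/k(E)$ fixed by $H_C$; hence $\Aut(k(F)/k(E))=N_H(H_C)/H_C$, and one checks that under these identifications $\varphi$ is the identity of $N_H(H_C)/H_C$, which completes the proof modulo the key claim.

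The hard part is this key claim. By Lemma~\ref{lem:smoothness of fiber product lemma}\eqref{item:smooth lemma 1}, applied to the separable morphisms $\hat C\to D$ (separable because Galois) and $g$, the scheme $\hat C\times_D E$ is geometrically reduced and is regular precisely when the ramification loci of $\hat C\to D$ and of $g$ are disjoint in $D$. The crux is that $\hat C\to D$ and $C\to D$ have the same ramification locus in $D$: the Galois closure of an extension of function fields is unramified at exactly the places where the original extension is, since both the Galois closure and any sub-extension of an unramified extension are unramified. As $F=C\times_D E$ is a regular curve, Lemma~\ref{lem:smoothness of fiber product lemma}\eqref{item:smooth lemma 1} gives that the ramification loci of $C\to D$ and $g$ are disjoint, hence so are those of $\hat C\to D$ and $g$, and therefore $\hat C\times_D E$ is regular; being regular and connected it is integral, and being integral and geometrically reduced over the separably closed field $K$ it is geometrically integral — a regular curve, as claimed. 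I expect this step, namely recognizing that the connectedness hypothesis in $(\ast\ast')$ is exactly what upgrades regularity of $\hat C\times_D E$ to integrality (so that the Galois correspondence of the previous paragraph is valid), to be the main obstacle; without integrality, $\Aut(k(F)/k(E))$ can be strictly larger than $\Aut(k(C)/k(D))$.
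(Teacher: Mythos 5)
Your argument is correct and follows essentially the same route as the paper's: reduce to a separably closed base field, pass to the minimal Galois closure supplied by $(\ast\ast')$, prove that $\hat C\times_D E$ is a regular curve via the disjoint-ramification argument, deduce Galoisness over $E$ by a degree count, and identify both relative automorphism groups with $N_H(H_C)/H_C$. The only additions are presentational (function-field language and the tangent-space verification that the automorphism group schemes are \'{e}tale, which the paper leaves implicit); just note that $\Omega_{C/D}$ is torsion because $f$ is finite \emph{and separable}, not merely finite.
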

\begin{proof}
    % (READ THIS AND FIX REFERENCES)
    % This result is a consequence of some elementary Galois theory. In keeping with the notation in this paper, we maintain our geometric language. Consider the Cartesian square
    % \[
    %     \begin{tikzcd}
    %         Z\arrow{r}{f'}\arrow{d}[swap]{g'}&E\arrow{d}{g}\\
    %         C\arrow{r}{f}&D
    %     \end{tikzcd}
    % \]
    To verify that the pullback map is an isomorphism, we may assume that $K$ is separably closed, in which case we need only prove that the map $\Aut(C/D)\to\Aut(F/E)$ is a bijection.
    % Consider first the case when $C\to D$ is Galois. We then have 
    % \[
    %  \deg(C/D)=|\Aut(C/D)|\leq |\Aut(F/E)|\leq\deg(F/E).
    % \]
    % As the square is Cartesian, we have $\deg(C/D)=\deg(F/E)$, so the above inequalities are both equalities. This in particular gives the desired claim, and also shows that $F\to E$ is Galois. 
    Let $C'\to C$ be a minimal Galois closure of $C\to D$. Set $F'=C'\times_CF$, and consider the diagram
    \[
        \begin{tikzcd}
            F'\arrow{d}\arrow{r}&F\arrow{r}{f'}\arrow{d}{g'}&E\arrow{d}{g}\\
            C'\arrow{r}&C\arrow{r}{f}&D
        \end{tikzcd}
    \]
    which has Cartesian squares. We claim that $F'$ is a regular curve. As $F$ is regular, Lemma \ref{lem:smoothness of fiber product lemma}~\eqref{item:smooth lemma 1} implies that the ramification loci of $f$ and $g$ are disjoint. The morphism $C'\to C$ is unramified over the branch locus of $C\to D$, so the ramification loci of $C'\to D$ and $E\to D$ are disjoint. Furthermore, by assumption, the right square satisfies condition $(\ast\ast)$, so $F'$ is connected. As $K$ is separably closed, $F'$ is therefore geometrically connected. It follows from Lemma \ref{lem:smoothness of fiber product lemma} that $F'$ is a regular curve.
    
    Consider the pullback map $\Aut(C'/D)\to\Aut(F'/E)$. As $g$ is faithfully flat, this map is injective. The morphism $C'\to D$ is Galois, so we have
    \[
        \deg(C'/D)=|\Aut(C'/D)|\leq |\Aut(F'/E)|\leq\deg(F'/E).
    \]
    On the other hand, the outer rectangle is Cartesian, so we have $\deg(C'/D)=\deg(F'/E)$, and therefore these inequalities are all equalities. It follows that the pullback map $\Aut(C'/D)\to\Aut(F'/E)$ is an isomorphism and $F'\to E$ is Galois. Similarly, the pullback map $\Aut(C'/C)\to\Aut(F'/F)$ is an isomorphism.
    
    %https://math.stackexchange.com/questions/1796472/ramification-of-prime-in-normal-closure
    By a standard result from Galois theory, we may describe the group $\Aut(C/D)$ in terms of the Galois group $\Aut(C'/D)$ and its subgroup $\Aut(C'/C)$: namely, $\Aut(C/D)$ is canonically identified with the quotient $N_{\Aut(C'/D)}(\Aut(C'/C))/\Aut(C'/C)$. We have a similar description for the group $\Aut(F'/E)$. We have shown that the pullback map $\Aut(C'/D)\to\Aut(F'/E)$ is an isomorphism, and that it restricts to an isomorphism $\Aut(C'/C)\to\Aut(F'/F)$. It follows that pullback along $g$ induces an isomorphism between $\Aut(C/D)$ and $\Aut(F/E)$.
    % Now consider the general case. Let $F\to C$ be a morphism of smooth curves such that $F\to D$ is Galois. Set $W=F\times_CZ$, and consider the diagram
    % \[
    %     \begin{tikzcd}
    %         W\arrow{d}\arrow{r}&Z\arrow{r}\arrow{d}&E\arrow{d}{g}\\
    %         F\arrow{r}{h}&C\arrow{r}{f}&D
    %     \end{tikzcd}
    % \]
    % The morphism $F\to C$ is unramified over the ramification locus of $C\to D$ (GENERAL GALOIS FACT!). We may therefore apply Lemma \ref{lem:compositum omnibus lemma} to the outer rectangle to deduce that $W$ is a smooth proper geometrically integral curve.
    % %https://math.stackexchange.com/questions/1796472/ramification-of-prime-in-normal-closure
    % By a standard result from Galois theory, we may describe the group $\Aut(C/D)$ in terms of the group $\Gal(F/D)$ and its subgroup $\Gal(F/C)$: namely, $\Aut(C/D)$ is canonically identified with the quotient $N_{\Gal(F/D)}(\Gal(F/C))/\Gal(F/C)$. By the previous case, $W\to E$ is also Galois, and so we have a similar description for the group $\Aut(W/E)$. Furthermore, the previous case shows that the pullback map $\Gal(F/D)\to\Gal(W/E)$ is an isomorphism, and moreover restricts to an isomorphism $\Gal(F/C)\to\Gal(W/Z)$. Using our descriptions of $\Aut(C/D)$ and $\Aut(Z/E)$, we conclude that pullback induces an isomorphism between $\Aut(C/D)$ and $\Aut(Z/E)$.
\end{proof}

\begin{lemma}\label{lem:Galois fact}
    Assume that the square~\eqref{eq:a cartesian square} satisfies condition $(\ast\ast)$. Suppose given a regular curve $V$ over $K$ and a factorization $F\to V\to E$ of $f'$. There exists a regular curve $W$ over $K$ and a commutative diagram
    \[
        \begin{tikzcd}
            F\arrow{r}\arrow[bend left=25]{rr}{f'}\arrow{d}[swap]{g'}&V\arrow{d}\arrow{r}&E\arrow{d}{g}\\
            C\arrow{r}\arrow[bend right=25]{rr}[swap]{f}&W\arrow{r}&D
        \end{tikzcd}
    \]
    in which both squares are Cartesian.
\end{lemma}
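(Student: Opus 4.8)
The plan is to construct $W$ as the "image" of $C$ in an appropriate intermediate curve, using the Galois-theoretic correspondence established in the proof of Proposition \ref{prop:key galois proposition}. First I would reduce to the case where $K$ is separably closed: both the conclusion (existence of $W$ with two Cartesian squares) and the hypothesis $(\ast\ast)$ are insensitive to base change along a separable closure, and a curve over $K$ with the required factorization property can be descended by a standard Galois-descent argument once it is shown to be canonical. So assume $K$ separably closed, and translate everything into function fields: writing $k(D)\subseteq k(C)$, $k(D)\subseteq k(E)$, the Cartesian square means $k(F)=k(C)\otimes_{k(D)}k(E)$ is a field (which it is, since $F$ is a regular curve), and we are given an intermediate field $k(E)\subseteq k(V)\subseteq k(F)$. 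I want to produce $k(D)\subseteq k(W)\subseteq k(C)$ with $k(V)=k(C)\otimes_{k(D)}k(W)$ and $k(F)=k(C)\otimes_{k(W)}k(V)$, i.e. $k(V)=k(W)\cdot k(C)$ inside $k(F)$.

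The key step is to pass to a minimal Galois closure $C'\to C$ of $C\to D$ as in Proposition \ref{prop:key galois proposition}, and form $F'=C'\times_C F$, which by the argument there is a regular curve with $F'\to E$ Galois and $\Aut(C'/D)\iso\Aut(F'/E)$, $\Aut(C'/C)\iso\Aut(F'/F)$ via pullback along $g$. Under the Galois correspondence for $k(F')/k(E)$, the intermediate field $k(V)$ corresponds to a subgroup $H\subseteq\Aut(F'/E)$ containing $\Aut(F'/F)$ — wait, I should instead take $H$ to be the subgroup fixing $k(V)$, which is contained in $\Aut(F'/E)$ but need not contain $\Aut(F'/F)$ since $V$ sits over $E$, not over $F$; rather $k(V)\subseteq k(F)$ forces $\Aut(F'/F)\subseteq H=\Gal(F'/V)$. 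Transporting $H$ through the isomorphism $\Aut(F'/E)\iso\Aut(C'/D)$ gives a subgroup $H'\subseteq\Aut(C'/D)$ with $\Aut(C'/C)\subseteq H'$; let $k(W)=(k(C'))^{H'}$ be the corresponding intermediate field of $k(C')/k(D)$, so $k(D)\subseteq k(W)\subseteq k(C)$, and let $W$ be the corresponding regular curve with its finite maps $C\to W\to D$. Both squares $F\to V\to E$ / $C\to W\to D$ are then Cartesian: this is because pullback along $g$ (equivalently, along $F'\to C'$) matches the subgroup lattices compatibly, so the compositum $k(W)\cdot k(C)$ inside $k(C')$ pulls back to $k(V)\cdot k(F)=k(V)$ inside $k(F')$ — more precisely, one checks $k(C)\otimes_{k(W)}k(V)\to k(F')$ lands in and surjects onto $k(F)$ by comparing the fixed fields of the relevant subgroups, and then $k(C)\otimes_{k(D)}k(W)\cong k(V)$ follows from the outer square being Cartesian together with the tower relation.

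The main obstacle I expect is verifying that the two squares are genuinely Cartesian — i.e., that the fiber products of function fields are \emph{fields} and agree with the prescribed intermediate fields, rather than merely that there is a commutative diagram of curves. This amounts to a linear-disjointness bookkeeping inside $k(C')$: one must show $k(W)$ and $k(C)$ are linearly disjoint over $k(D)$ (so the lower square is Cartesian) and $k(V)$ and $k(C)$ are linearly disjoint over $k(W)$ (so the upper square is Cartesian). The first is automatic from $k(F)=k(C)\otimes_{k(D)}k(E)$ being a field and $k(W)$ being sandwiched appropriately once one translates into subgroup indices; the second follows from $F'\to E$ being Galois (so $F'$ contains enough automorphisms) combined with the degree count $[k(V):k(W)\cap k(V)\text{-analog}]$ — concretely, compare $[k(F):k(V)]$ with $[k(C):k(W)]$ using that both equal the index $[H:\Aut(F'/F)]=[H':\Aut(C'/C)]$. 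Finally, to return to a general field $K$: the curve $W$ produced over $K^{\mathrm{sep}}$ is canonical (it is characterized by $k(W)=k(C)\cap k(V)$ inside $k(F')$, say), hence $\Gal(K^{\mathrm{sep}}/K)$-stable, and so descends to a regular curve over $K$ fitting into the required diagram.
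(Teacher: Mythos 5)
Your proposal follows essentially the same route as the paper's proof: reduce to separably closed $K$, pass to a minimal Galois closure $C'\to C$ with $F'=C'\times_DE$, transport the subgroup $\Aut(F'/V)\subset\Aut(F'/E)$ through the pullback isomorphism of Proposition \ref{prop:key galois proposition} to a subgroup of $\Aut(C'/D)$ containing $\Aut(C'/C)$, define $W$ as the corresponding quotient, and verify the squares are Cartesian by matching degrees/indices. This is correct and matches the paper's argument in all essentials.
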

\begin{proof}
    Such a diagram is unique up to a unique isomorphism, so by Galois descent we may assume that $K$ is separably closed. Let $C'\to C$ be a minimal Galois closure of $C\to D$ and set $F'=C'\times_DE$. As in the proof of Proposition \ref{prop:key galois proposition}, $F'$ is also a regular curve over $K$ and $F'\to E$ is Galois. Under the Galois correspondence, the given curve $V$ corresponds to the subgroup $\Gamma_V:=\Aut(F'/V)\subset\Aut(F'/D)$. By Proposition \ref{prop:key galois proposition}, $\Gamma_V$ is the pullback of a subgroup say $\Gamma_W$ of $\Aut(C'/D)$. We set $W=C'/\Gamma_W$. We obtain a commutative diagram
    \[
        \begin{tikzcd}
            F'\arrow{d}\arrow{r}&F\arrow{d}\arrow{r}&V\arrow{d}\arrow{r}&E\arrow{d}\\
            C'\arrow{r}&C\arrow{r}&W\arrow{r}&D.
        \end{tikzcd}
    \]
    By the Galois correspondence, the degrees of $C'\to W$ and $F'\to V$ are equal. This implies that the squares are all Cartesian, and completes the proof.
\end{proof}
% \begin{remark}
%     In the statements of Proposition \ref{prop:cube theorem with ramification condition} and Proposition \ref{prop:key galois proposition}, it suffices to assume only that $g$ factors as a composition of finite morphisms of curves, each of which is totally ramified at some point.
% \end{remark}

%\subsection{Incompressible morphisms of curves}\label{sec:incompressible morphisms}

The following result gives some situations under which condition $(\ast\ast)$ holds.

\begin{lemma}\label{lem:fiber product lemma}
    Let $f:C\to D$ and $g:E\to D$ be finite separable morphisms of regular curves. Suppose that the ramification loci of $f$ and $g$ are disjoint and that at least one of the following conditions hold.
    \begin{enumerate}
        \item There exists a closed point of $D$ which is totally ramified under $g$.
        \item $g$ has prime degree and $\deg(C/D)<\deg(E/D)$.
    \end{enumerate}
    The fiber product $F:=C\times_DE$ is regular and geometrically integral and the resulting square~\eqref{eq:a cartesian square} satisfies condition $(\ast\ast)$. Furthermore, if $C,D,$ and $E$ are smooth, then $F$ is smooth.
\end{lemma}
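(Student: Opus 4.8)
The regularity of $F$, and its smoothness when $C$, $D$, and $E$ are smooth, I would read off immediately from Lemma~\ref{lem:smoothness of fiber product lemma}, using that the ramification loci of $f$ and $g$ are disjoint. For the two remaining assertions — that $F$ is geometrically integral, and that the square~\eqref{eq:a cartesian square} satisfies $(\ast\ast)$ — the plan is to first reduce to the case that $K$ is separably closed. This reduction is harmless: degrees of finite morphisms are unaffected by base change, the ramification loci base change compatibly, so disjointness persists, and hypothesis (1) is preserved because a closed point $y \in D$ that is totally ramified under $g$ has trivial residue field extension, so the fibre of $g$ over $y$ remains local of the same length after any base change; hence every point of $D_{K^{\mathrm{sep}}}$ lying over such a $y$ is again totally ramified. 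Once we are over $K^{\mathrm{sep}}$, it will suffice to show that $F$ is connected: since $F$ is regular, a connected $F$ is integral, and being also geometrically reduced (Lemma~\ref{lem:smoothness of fiber product lemma}) over a separably closed field it is then geometrically integral, whence $F$ is geometrically integral over the original $K$ as well. So the problem reduces to proving, over $K^{\mathrm{sep}}$, that $F$ is connected and that $(\ast\ast')$ holds.

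Assume then that $K = K^{\mathrm{sep}}$. I would fix a minimal Galois closure $C' \to C$ of $C \to D$ and prove that $F' := C' \times_D E$ is connected; since $C' \to C$ is finite surjective, so is $F' \to F$, so this simultaneously yields the connectedness of $F$ and establishes $(\ast\ast')$. As in the proof of Proposition~\ref{prop:key galois proposition}, the ramification locus of the Galois cover $C' \to D$ is contained in that of $C \to D$, hence is disjoint from the ramification locus of $g$, so $F'$ is regular by Lemma~\ref{lem:smoothness of fiber product lemma}; moreover $F' \to E$ is finite flat, so every component of $F'$ dominates $E$, and the connected components of $F'$ therefore correspond to the field factors of the finite \'{e}tale $k(E)$-algebra $k(C') \otimes_{k(D)} k(E)$. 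Since $k(C')/k(D)$ is Galois, this algebra is a field — equivalently, $F'$ is connected — exactly when $k(C') \cap k(E) = k(D)$, the intersection being taken inside a common overfield. So suppose $k(C') \cap k(E) \supsetneq k(D)$, and let $Z \to D$ be the nontrivial subcover of $C' \to D$ with function field $k(C') \cap k(E)$; then $g$ factors as $E \to Z \to D$.

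Ruling out such a $Z$ is, I expect, the main point, and it is here that hypothesis (1) or (2) enters. Under (1), let $y \in D$ be totally ramified under $g$, with unique preimage $\tilde y \in E$, and let $z$ be the image of $\tilde y$ in $Z$. Then
\[
    \deg(E/D) = e(\tilde y/y) = e(\tilde y/z) \cdot e(z/y) \leq \deg(E/Z) \cdot \deg(Z/D) = \deg(E/D),
\]
so $e(z/y) = \deg(Z/D)$, which is $> 1$ since $Z \to D$ is nontrivial; hence $y$ lies in the ramification locus of $Z \to D$, and so in that of $C \to D$, while $y$ also lies in the ramification locus of $g$ — contradicting disjointness. (If $\deg(E/D) = 1$ then $g$ is an isomorphism and the lemma is trivial, so we may assume $\deg(E/D) \geq 2$.) Under (2), $\deg(Z/D)$ is a divisor of the prime $\deg(E/D)$ that is $> 1$, hence equals $\deg(E/D)$, so $Z = E$ and $k(E) \subseteq k(C')$; then $\deg(E/D)$ divides $|\Aut(C'/D)|$, which in turn divides $\bigl(\deg(C/D)\bigr)!$ because $\Aut(C'/D)$ acts faithfully and transitively on the set of $\deg(C/D)$ embeddings $k(C) \hookrightarrow k(C')$ over $k(D)$; this is impossible since $\deg(E/D)$ is a prime exceeding $\deg(C/D)$.

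Apart from this case analysis, the only mildly technical points are the stability of ``$y$ is totally ramified under $g$'' under base change to $K^{\mathrm{sep}}$ and the comparison of the ramification loci of $C \to D$ and of its Galois closure $C' \to D$; both are standard, the latter being already used in the proof of Proposition~\ref{prop:key galois proposition}.
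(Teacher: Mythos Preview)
Your proof is correct and follows the same overall architecture as the paper's: reduce to $K$ separably closed, pass to the minimal Galois closure $C'\to C$, and show that $F'=C'\times_D E$ is connected, handling condition~(2) via the bound $\deg(C'/D)\mid(\deg(C/D))!$ and the primality of $\deg(E/D)$. The only substantive difference is in case~(1): the paper observes directly that a totally ramified point of $g$ yields a closed point of $C'$ whose preimage in $F'$ is a single point, forcing $F'$ to be connected, whereas you translate to the field-theoretic criterion $k(C')\cap k(E)=k(D)$ and derive a contradiction by showing the hypothetical intermediate curve $Z$ would have to ramify over $y$, violating the disjointness of ramification loci; both arguments are valid, the paper's being somewhat shorter.
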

\begin{proof}
    By Lemma \ref{lem:smoothness of fiber product lemma}, the assumption on ramification implies that $F$ is regular, and is smooth if $C,D,$ and $E$ are smooth. To verify the remaining claims, we may assume that $K$ is separably closed. Let $C'\to C$ be a minimal Galois closure of $C\to D$ and set $F':=C'\times_DE$. We have a commutative diagram
    \[
        \begin{tikzcd}
            F'\arrow{r}\arrow{d}&F\arrow{r}\arrow{d}&E\arrow{d}{g}\\
            C'\arrow{r}&C\arrow{r}{f}&D
        \end{tikzcd}
    \]
    with Cartesian squares. We will show that if either of the given conditions hold then $F'$ is connected. This will imply that $F$ is connected, hence a regular geometrically integral curve by Lemma \ref{lem:smoothness of fiber product lemma}, and that the square~\eqref{eq:a cartesian square} satisfies condition $(\ast\ast)$.
    %The morphism $C'\to C$ is unramified over the branch locus of $C\to D$, so $F'$ is regular. To complete the proof, it will suffice to show that if either of the given conditions hold then $F'$ is integral. Indeed, it will then follow that $F$ is integral and that the square satisfies condition $(\ast\ast)$.

    Suppose that condition (1) holds. This implies that there exists a closed point of $C'$ whose preimage in $F'$ consists of a single closed point, and so $F'$ is connected. Suppose that condition (2) holds. We have that $\deg(C'/D)\leq \deg(C/D)!$, so the degrees of $C'\to D$ and $E\to D$ are coprime, and therefore $F'$ is connected.
\end{proof}

% \begin{lemma}\label{lem:regular vs smooth fiber product}
%     Let $f:C\to D$ and $g:E\to D$ be finite separable morphisms of regular curves, and assume that the fiber product $F=C\times_DE$ is regular. If $C$ and $E$ are smooth, then $F$ is also smooth.
% \end{lemma}
% \begin{proof}
%     ???

%     (FOLD INTO PREVIOUS LEMMA)
% \end{proof}

Finally, we consider the following condition on a finite morphism of curves.

\begin{definition}
    Let $\lambda$ be an integer. A finite morphism $f:C\to D$ of regular curves over $K$ is $\lambda$-\textit{incompressible} if for every regular curve $V$ over $K$ and factorization $C\to V\to D$ of $f$ such that $V\to D$ is not an isomorphism, we have $g_V\geq\lambda$.
    %We say that $f$ is \textit{geometrically} $\lambda$-\textit{incompressible} if for every field extension $L/K$ the base change $f_L$ is $\lambda$-incompressible.
\end{definition}

Following Stichtenoth \cite[Lemma 2]{MR753434}, we show that by taking a suitable fiber product any pencil on a regular curve can be refined to a pencil which is $\lambda$--incompressible for arbitrarily large $\lambda$.

\begin{proposition}\label{prop:incompressible lemma}
    Let $C$ be a regular curve over $K$ and let $f:C\to\mathbf{P}^1$ be a finite separable morphism. If $K$ is finite, assume furthermore that $f$ is unramified over at least two $K$-points of $\mathbf{P}^1$. For any $\lambda\in\mathbf{Z}$, there exists a finite separable morphism $\varphi:\mathbf{P}^1\to\mathbf{P}^1$ 
    such that the fiber product $D:=C\times_{\mathbf{P}^1,\varphi}\mathbf{P}^1$ is regular and geometrically integral, the resulting commutative square
    \[
        \begin{tikzcd}
            D\arrow{r}{f'}\arrow{d}[swap]{\varphi'}&\mathbf{P}^1\arrow{d}{\varphi}\\
            C\arrow{r}{f}&\mathbf{P}^1
        \end{tikzcd}
    \]
    satisfies condition $(\ast\ast)$, and the morphism $f':D\to\mathbf{P}^1$ is $\lambda$-incompressible.
    % Let $f_0:C_0\to\mathbf{P}^1$ be a finite separable morphism of curves. For any $\lambda\in\mathbf{Z}$, there exists a morphism $\varphi:\mathbf{P}^1\to\mathbf{P}^1$ such that, for any finite \'{e}tale cover $C\to C_0$, the fiber product $C'=C\times_{\mathbf{P}^1,\varphi}\mathbf{P}^1$ is a curve and the morphism $f':C'\to\mathbf{P}^1$ is geometrically $\lambda$-incompressible.
    %TWEAK THIS??? DON'T QUITE NEED THE FOR ANY ETALE COVER PART?
\end{proposition}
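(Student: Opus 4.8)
The plan is to take $\varphi$ to be (a conjugate of) an $n$-th power map for a suitably large $n$, to read off regularity, geometric integrality and condition $(\ast\ast)$ from Lemma~\ref{lem:fiber product lemma}, and to establish $\lambda$-incompressibility by combining the Riemann--Hurwitz formula with the Castelnuovo--Severi inequality.

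First I would fix $\varphi$. When $K$ is infinite there are automatically two distinct $K$-points of $\mathbf{P}^1$ lying outside the (finite) ramification locus of $f$, and when $K$ is finite this existence is exactly our hypothesis; call such points $P_1,P_2$. Choose $\tau\in\Aut_K(\mathbf{P}^1)$ with $\tau(P_1)=0$ and $\tau(P_2)=\infty$, pick an integer $n\geq\max(2,\lambda+1)$ coprime to the characteristic of $K$, and set $\varphi=\tau^{-1}\circ m_n\circ\tau$, where $m_n\colon[x:y]\mapsto[x^n:y^n]$. Then $\varphi$ is finite and separable of degree $n$, its ramification locus is $\{P_1,P_2\}$ and hence disjoint from that of $f$, and $P_1$ is totally ramified under $\varphi$. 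Lemma~\ref{lem:fiber product lemma}, applied to $f$ and $\varphi$ via its condition $(1)$, then shows directly that $D=C\times_{\mathbf{P}^1,\varphi}\mathbf{P}^1$ is a regular geometrically integral curve, that the resulting square satisfies $(\ast\ast)$, and that $D$ is smooth whenever $C$ is. So it remains only to verify that $f'$ is $\lambda$-incompressible.

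Suppose then that $D\to V\xrightarrow{h}\mathbf{P}^1$ is a factorization of $f'$ with $h$ not an isomorphism. Since genera of regular curves and degrees of finite morphisms are unchanged under base change to $\overline K$ followed by normalization, I may assume $K$ is algebraically closed. Put $m=\deg f=\deg f'$ and $d=\deg h$; as $\mathbf{P}^1$ is normal, a finite morphism of degree $1$ onto it is an isomorphism, so $d\geq 2$ and $\deg(D\to V)=m/d$. Because $f$ is étale over $\{P_1,P_2\}$, the base change $\varphi'\colon D\to C$ of $\varphi$ has, over each of the $m$ points of $C$ lying above $P_i$, the same ramification behaviour as $\varphi$ above $P_i$; since the ramification divisor of $\varphi$ has degree $2n-2$, Riemann--Hurwitz for $\varphi'$ yields $g_D=ng_C+(m-1)(n-1)$. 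On the other hand, writing $s$ for the coordinate on the second factor $\mathbf{P}^1$, we have $k(D)=k(C)(s)$ and $k(\mathbf{P}^1)=K(s)\subseteq k(V)$, so $k(C)\cdot k(V)=k(D)$ and the morphism $D\to C\times V$ determined by $\varphi'$ and $D\to V$ is birational onto its image. The Castelnuovo--Severi inequality then gives $g_D\leq(n-1)(\tfrac{m}{d}-1)+ng_C+\tfrac{m}{d}\,g_V$. Substituting the value of $g_D$ and simplifying, the terms involving $g_C$ cancel and one is left with $g_V\geq(n-1)(d-1)\geq n-1\geq\lambda$, as required.

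I expect the only delicate point to be the use of the Castelnuovo--Severi inequality. As soon as $d>1$ the curves $C$ and $V$ fail to be linearly disjoint over $\mathbf{P}^1$, so one cannot apply the inequality to the pair $C\to\mathbf{P}^1$, $V\to\mathbf{P}^1$; instead one must apply it to the two maps $\varphi'\colon D\to C$ and $D\to V$, whose product is birational onto its image for the trivial reason that $V$ already contains the base $\mathbf{P}^1$ of $\varphi$. The hypothesis on $f$ in the finite-field case enters only to ensure enough room in $\mathbf{P}^1$ to place the two-point ramification locus of $\varphi$ away from that of $f$; everything else is formal together with the two short Riemann--Hurwitz and Castelnuovo--Severi computations.
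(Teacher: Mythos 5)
Your choice of $\varphi$ (a conjugated $n$-th power map, $n\geq\lambda+1$ prime to the characteristic, branched over two rational points where $f$ is unramified) and your appeal to Lemma \ref{lem:fiber product lemma} for regularity, geometric integrality and condition $(\ast\ast)$ are exactly what the paper does. The incompressibility step, however, is a genuinely different and correct argument. The paper uses Lemma \ref{lem:Galois fact} to descend a factorization $D\to V\to\mathbf{P}^1$ to a factorization $C\to W\to\mathbf{P}^1$ with Cartesian squares, and then applies Riemann--Hurwitz to $V\to W$, exploiting the (at least four) points of $W$ over the two branch points of $\varphi$, each totally ramified of index $\deg\varphi$, to get $g_V\geq\deg\varphi-1\geq\lambda$ from the crude bound $g_W\geq 0$. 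You bypass the Galois-theoretic descent entirely: you compute $g_D=ng_C+(m-1)(n-1)$ by Riemann--Hurwitz for the base change $\varphi'\colon D\to C$ (legitimate, since $f$ is \'etale over the two branch points of $\varphi$, so $\varphi'$ is totally and tamely ramified of index $n$ over the $2m$ points of $C$ above them and unramified elsewhere, and this persists after passing to smooth models over $\overline K$), and then apply Castelnuovo--Severi to the pair $\varphi'\colon D\to C$ and $D\to V$, whose product is birational onto its image because $k(D)$ is the compositum of $k(C)$ with the function field of the upper $\mathbf{P}^1$, which is contained in $k(V)$; the algebra indeed yields $g_V\geq(n-1)(d-1)\geq n-1\geq\lambda$. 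Your remark that Castelnuovo--Severi must be applied to the two projections from $D$, rather than to $C$ and $V$ over the base $\mathbf{P}^1$, is the right precaution. In short, your route trades the paper's use of Lemma \ref{lem:Galois fact} (and hence of condition $(\ast\ast)$ in this step) for an exact genus computation of $D$ plus a second application of Castelnuovo--Severi, which is arguably more self-contained, while the paper's route reuses machinery it needs elsewhere and avoids computing $g_D$ at all; both reductions to an algebraically closed base field are at the same level of detail.
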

\begin{proof}
For an integer $m\geq 1$, we consider the morphism
    \[
        \varphi_m:\mathbf{P}^1\to\mathbf{P}^1
    \]
given by $[x:y]\mapsto [x^m:y^m]$. Let $m$ be a positive integer such that $m\geq\lambda+1$ and which is coprime to the exponential characteristic of $K$, and let $\varphi$ be the composition of $\varphi_m$ with an appropriate automorphism of $\mathbf{P}^1$ so that $\varphi$ and $f$ have disjoint ramification loci. We claim that $\varphi$ has the desired properties. It follows from Lemma \ref{lem:fiber product lemma} that $D$ is regular and geometrically integral and that the square satisfies condition $(\ast\ast)$. It remains to verify that $f'$ is $\lambda$-incompressible. Suppose given a regular curve $V$ and a factorization $D\to V\to\mathbf{P}^1$ of $f'$ such that $V\to\mathbf{P}^1$ is not an isomorphism. We will show that $g_V\geq\lambda$. To check this, we may assume by passing to smooth models over an algebraic closure of $K$ that $C,D,$ and $V$ are smooth and that $K$ is algebraically closed. Let $C\to W\to\mathbf{P}^1$ be the factorization of $f$ produced in Lemma \ref{lem:Galois fact}, so that we have a diagram %Let $\widetilde{C},\widetilde{D},\widetilde{V},$ and $\widetilde{W}$ be the respective normalizations of $C,D,V,$ and $W$.
\[
    \begin{tikzcd}
        D\arrow{d}\arrow{r}&V\arrow{d}{\psi}\arrow{r}&\mathbf{P}^1\arrow{d}{\varphi}\\
        C\arrow{r}&W\arrow{r}&\mathbf{P}^1
    \end{tikzcd}
\]
of smooth curves with Cartesian squares. We will bound the genus of $V$ using the Riemann--Hurwitz formula for $\psi$:
\[
    2g_{V}-2=m(2g_{W}-2)+\sum_{P\in V}(e_P-1).
\]
Here, $e_P$ is the ramification index of $\psi$ at $P$. The morphism $\varphi$ is totally ramified over two points, say $Q_0,Q_1$. We assumed that the morphisms $f$ and $\varphi$ had disjoint ramification loci, so $W\to\mathbf{P}^1$ is unramified over $Q_0$ and $Q_1$. As $W\to\mathbf{P}^1$ is not an isomorphism, there are (at least) four distinct points of $W$ which lie over $Q_0$ or $Q_1$. Moreover, $\psi$ is totally ramified over each of these points, with ramification index $m$. We obtain
\begin{align*}
    g_{V}&=mg_{W}-(m-1)+\frac{1}{2}\sum_{P\in V}(e_P-1)\\
    &\geq mg_{W}-(m-1)+\frac{1}{2}(4(m-1))\\
    &\geq m-1
\end{align*}
where the last step uses the trivial bound $g_{W}\geq 0$. We assumed that $m\geq\lambda+1$, and hence conclude that $g_{V}\geq\lambda$.
\end{proof}

The following lemma gives a consequence of the $\lambda$--incompressible condition.

\begin{lemma}\label{lem:galois + incompressible}
    Let $f:C\to D$ and $g:E\to D$ be finite separable morphisms of regular curves. Assume that $f$ and $g$ have disjoint ramification loci, and that the base change of $f$ to a separable closure of $K$ is Galois and $\lambda$--incompressible for some integer $\lambda>g_E$. The fiber product $F:=C\times_DE$ is regular and geometrically integral, and the resulting square~\eqref{eq:a cartesian square} satisfies condition $(\ast\ast)$.
\end{lemma}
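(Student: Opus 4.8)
The plan is to reduce immediately to the case where $K$ is separably closed, since condition $(\ast\ast)$ is defined by base change to such a closure and the regularity and geometric integrality of $F$ can be checked there as well (using that $K \to K^{\mathrm{sep}}$ is separable, so regularity is preserved). Once $K$ is separably closed, the disjointness of the ramification loci together with Lemma~\ref{lem:smoothness of fiber product lemma}~\eqref{item:smooth lemma 1} gives that $F$ is regular, and by part~\eqref{item:smooth lemma 2} it remains only to verify that $F$ is connected; granting that, $F$ is a regular curve and $(\ast\ast')$ holds provided we also check the Galois-closure condition. So the entire content is a connectedness statement, and in fact the same statement for the fiber product $F' := C' \times_D E$, where $C' \to C$ is a minimal Galois closure of $C \to D$: if $F'$ is connected then so is its quotient-image $F$, and $(\ast\ast')$ is exactly the assertion that $C' \times_D E$ is connected.

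The key step is therefore to show $F' = C' \times_D E$ is connected, and here is where the $\lambda$-incompressibility hypothesis enters. Suppose $F'$ were disconnected. Since $C' \to D$ is Galois (over the separably closed $K$), the Galois group $\Gamma = \Aut(C'/D)$ acts transitively on the connected components of $F'$, so each component $F'_0$ maps to $D$ with degree $\deg(C'/D)/n$, where $n$ is the number of components; equivalently, if $V_0$ denotes the normalization of the image of $F'_0 \to E$... more usefully, I would run the argument through the curve $W$ produced by Lemma~\ref{lem:Galois fact}. Applying that lemma to a factorization coming from a component of $F'$ (or directly to a factorization of $f'$ through the normalization $V$ of the image of a component of $F = C\times_D E$ in $E$), we get a regular curve $W$ with $C \to W \to D$, Cartesian squares, and $F \cong C \times_D$ (image curve)... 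The cleaner route: a nontrivial decomposition of $F$ forces, via the Cartesian square, a nontrivial intermediate curve $V$ strictly between $E$ and... no — it produces an intermediate curve $W$ with $C \to W \to D$ and $W \to D$ \emph{not} an isomorphism, together with an identification making $E \to D$ factor as $E \to \overline{W} \to D$ where $\overline{W}$ is the pushout/base-changed copy of $W$. Then $g_W \leq g_E$ (or the relevant intermediate curve has genus bounded by $g_E$, since it is dominated by $E$), contradicting $\lambda$-incompressibility of $f$ with $\lambda > g_E$, which demands $g_W \geq \lambda > g_E$.

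Concretely, the steps in order are: (1) reduce to $K$ separably closed; (2) invoke Lemma~\ref{lem:smoothness of fiber product lemma} to get regularity of $F$ (and of $F'$, using disjointness of the ramification loci of $C' \to D$ and $E \to D$, which follows since $C' \to C$ is unramified over the branch locus of $C \to D$) and to reduce geometric integrality to connectedness; (3) assume $F$ (equivalently $F'$) is disconnected and extract from a connected component, via Lemma~\ref{lem:Galois fact}, an intermediate regular curve $W$ with $C \to W \to D$, $W \to D$ not an isomorphism, and $W$ dominated by a curve of genus $\leq g_E$ (so $g_W \leq g_E < \lambda$); (4) this contradicts the $\lambda$-incompressibility of $f$, so $F$ is connected, hence geometrically connected over the separably closed $K$, hence (with regularity) a regular geometrically integral curve; (5) conclude that $(\ast\ast')$ holds over $K^{\mathrm{sep}}$, i.e. $(\ast\ast)$ holds, noting that the minimal Galois closure $C'$ we chose has $C' \times_D E = F'$ connected. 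The main obstacle is step~(3): pinning down precisely which intermediate curve the Galois-theoretic decomposition of $F'$ produces and verifying that its genus is bounded by $g_E$ — this requires combining the Galois correspondence for $C' \to D$ with the compatibility of Lemma~\ref{lem:Galois fact}, and being careful that $W \to D$ is genuinely not an isomorphism (which is where disconnectedness of $F'$, i.e. the index $n > 1$, is used).
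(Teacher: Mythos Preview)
Your reduction steps are right and match the paper: pass to $K$ separably closed, note that since $f$ is already Galois there the minimal Galois closure is $C$ itself (so $F'=F$ and you can drop the primes), and use Lemma~\ref{lem:smoothness of fiber product lemma} to reduce everything to showing $F$ is connected.

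The gap is in step~(3). Your plan to invoke Lemma~\ref{lem:Galois fact} is circular: that lemma \emph{assumes} the square satisfies condition $(\ast\ast)$, which is exactly what you are proving here. Nor can it be applied to a connected component, since its hypotheses require $F$ to be a regular curve together with a factorization $F\to V\to E$ of $f'$, neither of which is in hand when $F$ is disconnected. So Lemma~\ref{lem:Galois fact} is simply not available at this point, and your description of where the intermediate curve $W$ comes from --- and, crucially, why its genus is bounded by $g_E$ --- remains unjustified.

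The paper fills this gap directly with the Galois action, without appealing to Lemma~\ref{lem:Galois fact}. Take a connected component $W\subset F$ and let $\Gamma\subset\Aut(C/D)$ be its stabilizer under the pulled-back action. The key claim is that $W/\Gamma\cong E$: if $P,Q\in W$ lie over the same point of $E$, transitivity of $\Aut(C/D)$ on fibers of $C\to D$ supplies some $\alpha$ with $(g^*\alpha)(P)=Q$, and since $g^*\alpha$ then preserves $W$ we have $\alpha\in\Gamma$. Thus $E\cong W/\Gamma$ dominates $C/\Gamma$, giving $g_{C/\Gamma}\le g_E<\lambda$; now $\lambda$-incompressibility of $f$ forces $C/\Gamma\to D$ to be an isomorphism, i.e.\ $\Gamma=\Aut(C/D)$. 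A degree count
\[
|\Gamma|\le|\Aut(W/E)|\le\deg(W/E)\le\deg(C/D)=|\Gamma|
\]
then yields $\deg(W/E)=\deg(F/E)$, so $W=F$ and $F$ is connected. This is the explicit content your step~(3) needs.
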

\begin{proof}    
     It will suffice to assume that $K$ is separably closed and show that $F$ is connected. Indeed, Lemma \ref{lem:smoothness of fiber product lemma} will then imply that $F$ is regular and geometrically integral, and as the identity $C\to C$ is up to isomorphism the unique minimal Galois closure of $C\to D$, this will show that the square~\eqref{eq:a cartesian square} satisfies condition $(\ast\ast)$.
     
     Consider the injective map
    \[
        g^*:\Aut(C/D)\hookrightarrow\Aut(F/E)
    \]
    given by pulling back along $g$. Let $W\subset F$ be a connected component and let $\Gamma\subset\Aut(C/D)$ be the subgroup consisting of those automorphisms $\alpha$ such that $g^*\alpha$ maps $W$ to itself. Via the injection $g^*$, we regard $\Gamma$ also as a subgroup of $\Aut(F/E)$. We obtain a diagram
    \[
        \begin{tikzcd}
            W\arrow{d}\arrow{r}&W/\Gamma\arrow{d}\arrow{r}&E\arrow{d}{g}\\
            C\arrow{r}&C/\Gamma\arrow{r}&D.
        \end{tikzcd}
    \]
    We claim that the map $W/\Gamma\to E$ is an isomorphism. To see this, consider two points $P,Q\in W$ which map to the same point in $E$. The automorphism group of $f$ acts transitively on fibers, so there exists an automorphism $\alpha\in\Aut(C/D)$ such that $g^*\alpha$ maps $P$ to $Q$. It follows that $g^*\alpha$ preserves $W$, and so we have $\alpha\in\Gamma$. We conclude that $P$ and $Q$ are equal in the quotient $W/\Gamma$, and therefore that $W/\Gamma\cong E$.

    From the above diagram we see that there is a finite morphism $E\cong W/\Gamma\to C/\Gamma$, so we have $g_E\geq g_{C/\Gamma}$. By assumption, $f$ is $\lambda$--incompressible for some integer $\lambda>g_E$. It must therefore be the case that the map $C/\Gamma\to D$ is an isomorphism. We conclude that $\Gamma=\Aut(C/D)$, and so $|\Aut(W/E)|\geq |\Gamma|=|\Aut(C/D)|$. On the other hand, as $f$ is Galois, we have that 
    \[
        |\Aut(W/E)|\leq \deg(W/E)\leq\deg(C/D)=|\Aut(C/D)|.
    \]
    It follows that $\deg(W/E)=\deg(C/D)$. Therefore $W=F$ and so $F$ is connected.
    %Furthermore, we have shown that $F\to E$ is Galois and the pullback map $g^*$ is an isomorphism.
\end{proof}

Finally, we record the following observation for future use.

\begin{lemma}\label{lem:aut of pencil finite field case}
    Suppose that $f:C\to D$ is a finite separable morphism whose degree is prime and such that over some closed point of $D$ there are two points at which $f$ ramifies to different degrees. If $C'\to C$ is an unramified geometrically Galois morphism, then the natural map $\sAut_K(C'/C)\to\sAut_K(C'/D)$ is an isomorphism.
\end{lemma}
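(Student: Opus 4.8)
The strategy is to reduce to the case of a separably closed base field and a statement about finite automorphism \emph{groups}, and then to carry out a normalizer computation inside the Galois group of a Galois closure; the role of the hypothesis on the ramification of $f$ is to prevent $C'\to D$ from being Galois. Write $\pi\colon C'\to C$ for the given morphism, so that $C'\to D$ is $f\circ\pi$. The natural map $\sAut_K(C'/C)\to\sAut_K(C'/D)$ is the inclusion of a closed subgroup scheme, since an automorphism of $C'$ commuting with $\pi$ also commutes with $f\circ\pi$. Since $C'\to D$ is separable, $\Omega_{C'/D}$ is a torsion sheaf on the integral curve $C'$ and so admits no non-zero map to $\mathcal{O}_{C'}$; hence $\sAut_K(C'/D)$ has trivial tangent space at the identity and is finite \'{e}tale over $K$, and therefore so is its closed subscheme $\sAut_K(C'/C)$ (equivalently, this is the Galois group scheme of the geometrically Galois morphism $\pi$). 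Two finite \'{e}tale $K$--schemes, one a closed subscheme of the other, agree exactly when they have the same points valued in a separable closure $K^{\mathrm{sep}}$, so it suffices to show that $\Aut(C'_{K^{\mathrm{sep}}}/C_{K^{\mathrm{sep}}})=\Aut(C'_{K^{\mathrm{sep}}}/D_{K^{\mathrm{sep}}})$. We may thus assume $K=K^{\mathrm{sep}}$, so that $\pi$ is Galois; moreover the hypothesis on $f$ survives this base change, since if $x_1,x_2$ lie over a closed point $P$ of $D$ with ramification degrees $e_1\neq e_2$ then for any closed point $Q$ of $D_{K^{\mathrm{sep}}}$ over $P$ the fibre of $C_{K^{\mathrm{sep}}}$ over $Q$ contains a point over $x_1$ and a point over $x_2$, with ramification degrees $e_1$ and $e_2$ over $Q$ (because $C_{K^{\mathrm{sep}}}\to C$ and $D_{K^{\mathrm{sep}}}\to D$ are unramified).

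So assume $K$ is separably closed, and let $\widetilde{C}\to C'$ be a Galois closure of $C'\to D$. Put $\Gamma=\Aut(\widetilde{C}/D)$, $H_C=\Aut(\widetilde{C}/C)$ and $H_{C'}=\Aut(\widetilde{C}/C')$, so that $H_{C'}\subseteq H_C\subseteq\Gamma$. Since $\pi$ is Galois, $H_{C'}$ is normal in $H_C$, and the Galois correspondence identifies $\Aut(C'/C)$ with $H_C/H_{C'}$ and $\Aut(C'/D)$ with $N_\Gamma(H_{C'})/H_{C'}$. Now $[\Gamma:H_C]=\deg(C/D)$ is prime by hypothesis, and $H_C\subseteq N_\Gamma(H_{C'})\subseteq\Gamma$, so either $N_\Gamma(H_{C'})=H_C$ --- in which case $\Aut(C'/D)=\Aut(C'/C)$ and we are done --- or $N_\Gamma(H_{C'})=\Gamma$, i.e.\ $H_{C'}$ is normal in $\Gamma$, which forces $\widetilde{C}=C'$ and means that $C'\to D$ is itself Galois. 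It remains to exclude the latter alternative.

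Suppose $C'\to D$ were Galois. Then $\Aut(C'/D)$ acts transitively on the fibre over each closed point of $D$, so all points of $C'$ lying over a fixed closed point of $D$ have a common ramification degree. On the other hand $\pi$ is unramified and surjective, so any point of $C'$ over a closed point $Q$ of $D$ has the same ramification degree over $Q$ as its image in $C$, and every point of $C$ over $Q$ arises in this way. Taking $Q$ as in the first paragraph, the ramification degrees over $Q$ of the points of $C'$ would include both $e_1$ and $e_2$, contradicting $e_1\neq e_2$. Hence $C'\to D$ is not Galois, which completes the proof.

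\textbf{The main obstacle.} The essential point is the last paragraph: once the reductions are in place, the whole statement comes down to showing that $C'\to D$ is not Galois, and the ramification hypothesis on $f$ is precisely what delivers this. That hypothesis is genuinely necessary --- for a non-Galois degree-$p$ subcover $C\to D$ of a Galois cover $C'\to D$ one has $\sAut_K(C'/C)\subsetneq\sAut_K(C'/D)$ --- so any proof must use it, and the combination ``unramified $\pi$'' plus ``mixed ramification degrees in $f$'' is exactly what is needed. The surrounding steps (passing to $K^{\mathrm{sep}}$, the \'{e}taleness of the automorphism group schemes, the translation into a normalizer computation) are routine, though one must take a little care with ramification under possibly inseparable residue extensions and with the existence of the Galois closure as a curve over $K^{\mathrm{sep}}$.
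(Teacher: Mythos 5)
Your proposal is correct and takes essentially the same route as the paper: reduce to a separably closed base field, use primality of $\deg(C/D)$ to force a dichotomy, and rule out the bad alternative by showing $C'\to D$ cannot be Galois, because the unramifiedness of $C'\to C$ propagates the two distinct ramification degrees of $f$ up to $C'$, contradicting the transitivity of a Galois automorphism group on fibers. The only cosmetic difference is that the paper carries out the group-theoretic step via the quotient curve $V=C'/\Aut_K(C'/D)$ in the factorization $C'\to C\to V\to D$, rather than through a normalizer computation inside a Galois closure.
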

\begin{proof}
    To verify this, we may assume that $K$ is separably closed and that $C'\to C$ is an unramified Galois morphism, in which case it will suffice to show that the map $\Aut(C'/C)\to\Aut(C'/D)$ is an isomorphism. For this, we first observe that a map satisfying the given assumptions on ramification cannot be Galois, as the relative automorphism group of a Galois morphism acts transitively on the fibers. Hence, $C\to D$ is not Galois, and as we assumed $C'\to C$ to be unramified, neither is $C'\to D$. Set $V=C'/\Aut(C'/D)$. Then $C'\to D$ factors as $C'\to C\to V\to D$. We assumed that $C\to D$ had prime degree, so either $C=V$ or $V=D$. But $C'\to D$ is not Galois, so we cannot have $V=D$. Therefore $C=V$, and so $\Aut(C'/C)=\Aut(C'/D)$.
\end{proof}

\section{Group actions on curves}\label{sec:group actions on curves}

In this section we will prove some results concerning the existence of curves and pencils on curves equipped with certain group actions. Let $K$ be a field. The first result shows that there exist curves over $K$ whose automorphism group is large enough to include any finite group.

\begin{theorem}\label{thm:curve with a free action}
    Let $X$ be a regular curve over $K$. Let $G$ be a finite \'{e}tale group scheme over $K$. There exists a regular curve $C$ over $K$ equipped with a free $G$-action and a finite separable morphism $C\to X$ which is $G$--invariant. Furthermore, if $X$ is smooth, then we may take $C$ to be smooth.
    %Let $G$ be a finite \'{e}tale group scheme. There exists a smooth curve $C$ over $K$ which admits a free action of $G$.
    %Let $G$ be a finite \'{e}tale group scheme over $K$. There exists a curve $C$ over $K$ which admits a free action of $G$.
\end{theorem}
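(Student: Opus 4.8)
The plan is to realize $G$ as acting on a projective space so that the action is free away from a closed subscheme of codimension $\ge 2$, to form the quotient, and to pull back to it a general complete intersection curve lying over $X$.

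\emph{Step 1: a free action on a projective space.} I would begin with a faithful $K$-linear representation of $G$, for instance the regular representation $V=\mathcal{O}(G)$, a finite-dimensional $K$-vector space on which $G$ acts through its coaction. This is faithful, and after replacing $V$ by $V\oplus V$ one checks over $\overline{K}$ --- where $G_{\overline{K}}$ is the constant group scheme attached to a finite group $\Gamma$ acting on $\overline{K}[\Gamma]\oplus\overline{K}[\Gamma]$ by translation --- that each nontrivial $\gamma$ has every eigenspace of codimension $\ge|\Gamma|\ge 2$. Hence the induced action of $G$ on $\mathbf{P}(V)$ is faithful and restricts to a \emph{free} action on a $G$-stable open $U\subseteq\mathbf{P}(V)$ whose complement $B=\bigcup_{\gamma\ne 1}\mathbf{P}(V)^\gamma$ has codimension $\ge 2$. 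Since $\mathcal{O}_{\mathbf{P}(V)}(1)$ is $G$-linearized, the invariant sections of its large powers are base-point-free --- produce them by applying a norm construction to sections that separate the points of a $G$-orbit --- so the quotient $\pi\colon\mathbf{P}(V)\to Q:=\mathbf{P}(V)/G$ exists as a projective scheme with $\pi$ finite, $\pi(B)\subseteq Q$ closed of codimension $\ge 2$, and $\pi$ restricting to a finite \'etale $G$-torsor $U\to U_Q:=\pi(U)$ with $U_Q$ smooth over $K$.

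\emph{Step 2: a complete intersection curve over $X$.} Next I would choose an ample line bundle on $Q$ and a very ample one on $X$ and form a very ample line bundle $\mathcal{M}$ on $Q\times_K X$ (of dimension $N+1$, where $N=\dim\mathbf{P}(V)$) as their external product. Let $C_Q\subseteq Q\times_K X$ be the intersection of $N$ general members of the complete linear system $|\mathcal{M}|$, after replacing $\mathcal{M}$ by a large power if necessary. By a Bertini argument --- using Poonen's Bertini theorem over finite ground fields --- I would arrange simultaneously that: (i) $C_Q$ lies in $U_Q\times X$, which is possible because its complement in $Q\times X$ has codimension $\ge 2$; (ii) $C_Q$ is a regular, geometrically integral curve, smooth over $K$ when $X$ is (for the latter one uses that $U_Q\times X$ lies in the $K$-smooth locus of $Q\times X$ when $X$ is smooth, since $U_Q$ is smooth over $K$); (iii) the projection $q\colon C_Q\to X$ is finite and \emph{separable} --- separability is checked on the generic fibre $Q_{k(X)}$, where a general complete intersection of very ample divisors is a reduced (indeed $k(X)$-smooth, as it lies over $(U_Q)_{k(X)}$) finite scheme, so $q$ is generically \'etale; and (iv) the pullback to $C_Q$ of the connected finite \'etale $G$-torsor $U\times X\to U_Q\times X$ is again connected, and stays connected after base change to $\overline{K}$, by Bertini irreducibility applied to this \'etale cover. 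Finally I would set $C:=\pi^{-1}(C_Q)=C_Q\times_{Q\times X}(\mathbf{P}(V)\times X)$. By construction $C\to C_Q$ is the restriction of the $G$-torsor $U\times X\to U_Q\times X$, hence a finite \'etale $G$-torsor with connected --- so geometrically connected --- total space; being \'etale over the regular (resp.\ $K$-smooth) curve $C_Q$ it is a regular (resp.\ smooth) curve, and regular plus geometrically connected yields geometrically integral. The action of $G$ on $C$ inherited from the first factor of $\mathbf{P}(V)\times X$ is free since $C\to C_Q$ is a $G$-torsor, and the composite $C\to C_Q\xrightarrow{q}X$ is $G$-invariant (it factors through $C_Q=C/G$), finite, and separable ($C\to C_Q$ is \'etale and $q$ is separable). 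This is the desired curve, and it is smooth when $X$ is.

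\emph{Main obstacle.} I expect the crux to be the Bertini argument of Step 2 over an \emph{arbitrary} field. Over finite fields classical Bertini is unavailable and must be replaced by Poonen's Bertini theorem, together with a refinement guaranteeing that the general hypersurface section also keeps the \'etale $G$-cover connected; and over imperfect fields one must ensure that the linear systems in play are not composite with Frobenius, in order to obtain regularity and smoothness of $C_Q$ and separability of $q$ --- this is precisely what forces the use of a \emph{very ample} bundle, whose sections separate tangent vectors. Assembling conditions (i)--(iv) all at once, uniformly over every field, is the technical heart of the argument; everything else (the representation-theoretic Step 1 and the torsor bookkeeping of Step 3) is formal.
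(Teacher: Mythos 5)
Your construction is essentially the paper's: a faithful representation of $G$ whose projectivized fixed locus has codimension at least $2$, the quotient $\mathbf{P}(V)\to\mathbf{P}(V)/G$, a Bertini argument (classical over infinite fields, Poonen's theorem over finite fields) applied to a complete intersection inside $(\mathbf{P}(V)/G)\times X$ avoiding the image of the fixed locus, and then pullback of the resulting \'etale $G$-cover. Two of the points you single out as the technical heart are, however, handled differently (and more cheaply) in the paper, and one of them is a genuine soft spot in your write-up. First, for geometric connectedness of the total space $C$ you invoke a ``Bertini irreducibility for the \'etale cover'' statement, and you correctly note that over a finite field this would require a refinement beyond Poonen's smoothness theorem (something in the spirit of Charles--Poonen irreducibility for preimages under a finite morphism), which you do not supply. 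The paper avoids this entirely: since the quotient map is finite, the preimages in $X\times\mathbf{P}(V)$ of the ample divisors cutting out $C_U/G$ are again ample, so $C$ itself is a complete intersection of ample divisors in the geometrically connected scheme $X\times\mathbf{P}(V)$ and is therefore geometrically connected over any field, with no irreducibility form of Bertini needed; geometric integrality then follows because $C\to C/G$ is \'etale and $C/G$ is geometrically integral. Second, your argument for separability of $C_Q\to X$ (``a general complete intersection has \'etale generic fibre'') is a generality argument that is only available over an infinite field; over a finite field ``general'' members do not exist and smoothness of $C_Q$ alone does not rule out a map composite with Frobenius. The paper fixes this by using Poonen's theorem to impose the extra local condition that the intersection of $C_U/G$ with $\{P\}\times(\mathbf{P}(V)/G)$, for a fixed closed point $P\in X$, is \'etale of dimension $0$; a finite map of curves that is unramified over one closed point is separable. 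With these two adjustments your outline matches the paper's proof; your Step 1 (using $V\oplus V$ for the regular representation rather than a large tensor power) is a harmless variant.
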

\begin{proof}
    Let $V$ be a finite dimensional $K$--vector space equipped with a faithful $G$--action. Let $Z\subset\mathbf{P}(V)$ be the closed locus of geometric points $x\in\mathbf{P}(V)$ such that $g\cdot x=x$ for some nontrivial $g\in G$. We assume that $Z$ has codimension at least 2 in $\mathbf{P}(V)$. For instance, a suitably large tensor power of the regular representation will have this property. Let $\pi:\mathbf{P}(V)\to\mathbf{P}(V)/G$ be the quotient map. Choose a projective embedding $X\times (\mathbf{P}(V)/G)\subset\mathbf{P}^N$. If $U\subset\mathbf{P}^N$ is a complete intersection of codimension $n$, we write $C_U=(\id_X\times\pi)^{-1}((X\times(\mathbf{P}(V)/G))\cap U)$. The $G$--action on $X\times\mathbf{P}(V)$ restricts to an action on $C_U$, and we have $C_U/G=(X\times(\mathbf{P}(V)/G))\cap U$. We therefore obtain a diagram
    \[
        \begin{tikzcd}
            C_U\arrow[hook]{d}\arrow{r}{\pi_{C_U}}&C_U/G\arrow[hook]{d}\arrow[hook]{r}&U\arrow[hook]{d}\\
            X\times\mathbf{P}(V)\arrow{r}{\id_X\times\pi}&X\times (\mathbf{P}(V)/G)\arrow[hook]{r}&\mathbf{P}^N
        \end{tikzcd}
    \]
    with Cartesian squares. We will show that $C_U\to X$ has the desired properties for a suitable $U$.
    
    Suppose first that $K$ is infinite. Let $U\subset\mathbf{P}^N$ be a general linear subspace of codimension $n$. We have assumed that $Z$ has codimension at least 2 in $\mathbf{P}(V)$, so the product $X\times\pi(Z)$ has codimension at least $2$ in $X\times(\mathbf{P}(V)/G)$. Therefore $U$ does not intersect $X\times\pi(Z)$, and so the intersection $C_U/G=(X\times(\mathbf{P}(V)/G))\cap U$ is contained in the regular locus of $X\times(\mathbf{P}(V)/G)$. By Bertini's theorem (see eg. \cite{MR1612610}), $C_U/G$ is regular and geometrically integral and the map $C_U/G\to X$ is finite and separable. As $\pi$ is \'{e}tale away from $Z$, the map $C_U\to C_U/G$ is \'{e}tale and the $G$--action on $C_U$ is free. It follows that $C_U$ is regular and that the map $C_U\to X$ is finite and separable. As $C_U$ is a complete intersection of ample divisors in a smooth geometrically connected scheme, $C_U$ is geometrically connected. Combining this with the fact that $C_U/G$ is geometrically integral and that $C_U\to C_U/G$ is \'{e}tale, we conclude that $C_U$ is geometrically integral. Finally, if $X$ is smooth, then by Bertini's theorem $C_U/G$ and hence $C_U$ will also be smooth. This completes the proof in the case when $K$ is infinite.
    
    Suppose that $K$ is finite. Then regularity and smoothness are equivalent. Applying Poonen's finite field Bertini theorem \cite[Theorem 1.1]{MR2144974} we find a complete intersection $U\subset\mathbf{P}^N$ of codimension $n$ (now possibly of high degree) such that $U$ does not intersect $X\times\pi(Z)$, so that the intersection $C_U/G=(X\times(\mathbf{P}(V)/G))\cap U$ is smooth, and so that the intersection of $C_U/G$ with the subscheme $\left\{P\right\}\times(\mathbf{P}(V)/G)$ is smooth of dimension 0, where $P\in X$ is a fixed closed point (note that this intersection must be nonempty for dimension reasons). Then the map $C_U/G\to X$ is finite and separable, as is the map $C_U\to X$. As before, $C_U$ is geometrically connected, and the quotient map $C_U\to C_U/G$ is finite \'{e}tale, so $C_U$ is also smooth, and therefore geometrically integral.
\end{proof}

% \begin{theorem}\label{thm:curves with a free action over a curve}
%     Let $Z$ be a smooth curve over $K$. Let $G$ be a finite \'{e}tale group scheme over $K$. There exists a smooth curve $C$ over $K$ which admits a free $G$-action and a finite separable morphism $f:C\to Z$ which is $G$--invariant.
%     %Let $G$ be a finite \'{e}tale group scheme over $K$. There exists a curve $C$ over $K$ which admits a free action of $G$.
% \end{theorem}
% \begin{proof}
%     By Theorem \ref{thm:curve with a free action}, we may find a curve $C$ equipped with a free $G$--action. Choose a $G$--invariant finite separable morphism $C\to\mathbf{P}^1$, and a pencil $Z\to\mathbf{P}^1$. Composing with an automorphism of $\mathbf{P}^1$ we may assume that 
% \end{proof}

The following result shows, on the other hand, that there also exist curves with trivial automorphism group.

\begin{theorem}[Poonen]\label{thm:curve with no auts}
    There exists a smooth curve $Y$ over $K$ such that $\sAut_K(Y)=1$. Furthermore, we may choose $Y$ so that there exists a finite separable morphism $Y\to\mathbf{P}^1$ which is totally ramified over $\infty$.
    %For any positive integer $g$, there exists a curve $C$ over $K$ of genus $\geq g$ such that $\sAut_K(C)=1$.
\end{theorem}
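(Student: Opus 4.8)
The assertion that there is a smooth curve $Y$ over $K$ with $\sAut_K(Y)=1$ is a theorem of Poonen \cite{MR1748288}, so the plan is to recall that construction and to extract from it the supplementary statement about the morphism to $\mathbf{P}^1$. Poonen produces $Y$ --- of any prescribed genus, and in particular we may take the genus to be $\geq 5$ --- as a trigonal curve, with an explicit affine model that may be taken of the form
\[
    y^3+a(x)\,y+b(x)=0
\]
(keeping a term $c(x)y^2$ in characteristic $3$), where $\deg a$ and $\deg b$ are chosen to control the genus and the remaining coefficients are chosen generically if $K$ is infinite and by the finite-field methods of loc.\ cit.\ if $K$ is finite. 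Let $\nu:Y\to\mathbf{P}^1$ be the resulting degree-$3$ morphism, given by the coordinate $x$. Since $\deg\nu=3$ is prime, in any factorization $Y\to V\to\mathbf{P}^1$ of $\nu$ either $Y\to V$ or $V\to\mathbf{P}^1$ is an isomorphism; because $g_Y\geq 5=3^2-2\cdot 3+2$, the morphism $\nu$ then satisfies condition $(\ast)$, so Corollary \ref{cor:SES of automorphism groups, just sets} yields an exact sequence $1\to\Aut_{\overline{K}}(Y_{\overline{K}}/\mathbf{P}^1)\to\Aut_{\overline{K}}(Y_{\overline{K}})\to\Aut_{\overline{K}}(\mathbf{P}^1)$. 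Poonen's choice of coefficients makes $\nu$ non-Galois (e.g.\ with monodromy group $S_3$), so the left-hand term is trivial, and makes the branch divisor of $\nu$ have trivial stabilizer in $\mathrm{PGL}_2$, so the right-hand map is trivial. As $Y$ is smooth of genus $\geq 2$ its automorphism scheme is \'{e}tale, and the preceding facts then force $\sAut_K(Y)=1$.

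To produce a finite separable morphism to $\mathbf{P}^1$ that is totally ramified over $\infty$, I would impose, in addition, the numerical conditions $\gcd(\deg b,3)=1$ and $3\deg a<2\deg b$. Reading off the Newton polygon of $y^3+a(x)y+b(x)$ at $x=\infty$, these conditions force the place $x=\infty$ of $\mathbf{P}^1$ to have a single point of $Y$ above it, with ramification index $3$; since total ramification over a rational point forces residue degree $1$, this point is $K$-rational and $\nu$ is totally ramified over $\infty=[1:0]$. The morphism $\nu$ is automatically separable when $K$ has characteristic different from $3$, and in characteristic $3$ separability is ensured by keeping $a(x)\neq 0$, so that the cubic in $y$ has nonzero $y$-derivative. (Alternatively, the required morphism can be produced from Poonen's theorem together with a $K$-rational point of $Y$ by a Riemann--Roch argument.)

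The one point that requires care is that imposing the numerical conditions which force total ramification at infinity must not disturb the conclusion $\sAut_K(Y)=1$. I do not expect this to be a genuine obstacle: those conditions only constrain the degrees and leading coefficients of $a$ and $b$, whereas Poonen's argument for the triviality of $\sAut_K(Y)$ exploits the freedom in the remaining coefficients, and it goes through over the affine space of pairs $(a,b)$ cut out by the degree constraints --- over an infinite field because the locus of ``bad'' pairs there is still a proper closed subvariety, and over a finite field by the same appeal to Poonen's finite-field Bertini theorem, now applied over that space. Accordingly, the real work is to unwind Poonen's construction in enough detail to verify this compatibility, rather than to produce a new argument.
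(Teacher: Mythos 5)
Your proposal takes the same route as the paper: both rest entirely on Poonen's explicit trigonal construction, and the paper's proof is simply the observation that Poonen's equations (valid over any field) already give a curve with trivial automorphism group scheme together with a degree-3 separable map to $\mathbf{P}^1$ totally ramified over $\infty$. The extra step you flag as the real work---imposing Newton-polygon conditions at infinity and re-verifying that they do not disturb $\sAut_K(Y)=1$---is unnecessary, since the totally ramified point over $\infty$ is already built into Poonen's Theorem~1 as stated, so no modification of his construction and no compatibility check are needed.
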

\begin{proof}
    Poonen \cite[Theorem 1]{MR1748288} gives explicit equations (valid over any field) of curves which have trivial automorphism group scheme, and which admit a morphism to $\mathbf{P}^1$ of degree 3 which is totally ramified over $\infty$.
    %We remark that the existence of curves with trivial automorphism group over sufficiently large fields was established by Katz--Sarnak \cite[Theorem 10.6.14, Remark 10.6.24]{MR1659828}. The case of finite fields is also treated using explicit methods by Rzedowski--Calder\'{o}n and Villa--Salvador \cite[Theorem 2]{MR1120718}.
    %(REMARK: could also do this directly, we basically did it below (now commented out)).
\end{proof}

Let $C$ be a regular curve over an infinite field $K$. We will show that $C$ admits a pencil whose automorphism group is any given finite subgroup of the automorphism group of $C$. Given a divisor $Z\subset C$, we write $\Aut_K(C;Z)$ for the subgroup of $\Aut_K(C)$ consisting of those automorphisms which map $Z$ to itself. Let $G\subset\Aut_K(C)$ be a finite subgroup. Set $D=C/G$ and let $\pi:C\to D$ be the quotient map. The following argument is inspired by a result of Maden--Rosen \cite[Proposition 2]{MR1088443}.

\begin{proposition}\label{prop:generic divisor automorphisms}
    Let $A\subset D$ be a divisor of degree $d$ over which $\pi$ is unramified, let $Z\subset D$ be a very ample divisor of degree $e$, and assume that $|G|(d-e)\geq 2g_C+3$. For a general divisor $B\in |Z|$, the divisor $A+B$ has the property that the subgroup $\Aut_K(C;\pi^{-1}(A+B))\subset\Aut_K(C)$ is equal to $G$.
\end{proposition}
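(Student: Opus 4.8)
The subgroup $\Aut_K(C;\pi^{-1}(A+B))$ always contains $G$, since $\pi^{-1}(A+B)$ is a $G$-invariant divisor on $C$ by construction; so the content is the reverse inclusion for general $B$. The plan is to reduce this to the statement that a stabilizing automorphism $\alpha$ satisfies $\pi\circ\alpha=\pi$: granting that, $\alpha$ lies in $\Aut_K(C/D)$, which equals $G$ because $C\to D=C/G$ is Galois with group $G$. I will use throughout that, since $Z$ is very ample and $K$ is infinite, for a general $B\in|Z|$ the divisor $B$ is reduced and its support misses both $\operatorname{supp}(A)$ and the (finite) branch locus of $\pi$, so that $\pi^{-1}(A+B)=\pi^{-1}(A)+\pi^{-1}(B)$ with the two summands reduced, disjoint, and of degrees $|G|d$ and $|G|e$.

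\emph{Step 1.} First I would show that any $\alpha$ with $\alpha_{*}\pi^{-1}(A+B)=\pi^{-1}(A+B)$ already preserves $\pi^{-1}(A)$ (and hence, by subtracting, $\pi^{-1}(B)$). If this fails, then $\beta:=\pi\circ\alpha^{-1}$ is a finite morphism $C\to D$ distinct from $\pi$ but with $\beta^{*}(A+B)=\pi^{*}(A+B)$; passing to the normalization $V$ of the image of $(\pi,\beta)\colon C\to D\times D$, the two projections $V\to D$ become distinct finite morphisms whose pullbacks of $A+B$ coincide, and the inequality of Castelnuovo--Severi bounds $g_{V}$, and hence $g_{C}$, from above. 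The role of the hypothesis $|G|(d-e)\ge 2g_{C}+3$ is precisely to make the degree $d+e$ of the divisor on which $\pi$ and $\beta$ are forced to agree too large to be consistent with this bound, producing a contradiction; this is the analogue for the quotient map $C\to C/G$ of the Castelnuovo--Severi argument behind Proposition~\ref{prop:CSI consequence} and of Madan--Rosen's result. Once Step 1 holds, every stabilizing $\alpha$ lies in $\Gamma:=\Aut_K(C;\pi^{-1}(A))$, and $\Gamma$ is a \emph{finite} group, independent of $B$, because $\pi^{-1}(A)$ is a reduced divisor of degree $|G|d\ge 2g_{C}+4$.

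\emph{Step 2.} Next I would show that for general $B$ the only elements of $\Gamma$ that also preserve $\pi^{-1}(B)$ lie in $G$. For a fixed $\alpha\in\Gamma\setminus G$, the set $\{B\in|Z|:\alpha_{*}\pi^{-1}(B)=\pi^{-1}(B)\}$ is closed in $|Z|$ and cannot be all of $|Z|$: otherwise $\alpha$ would preserve $\pi^{-1}(B)$ for every $B\in|Z|$, and since a very ample $Z$ separates points of $D$ this forces $\alpha$ to carry each fibre of $\pi$ onto itself, i.e. $\pi\circ\alpha=\pi$, whence $\alpha\in\Aut_K(C/D)=G$, contrary to $\alpha\notin G$. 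As $\Gamma$ is finite, the union over $\alpha\in\Gamma\setminus G$ of these proper closed subsets is again proper and closed, so a general $B$ avoids it. Combining the steps: for such $B$, a stabilizing $\alpha$ lies in $\Gamma$ by Step 1 and then in $G$ by Step 2, so $\Aut_K(C;\pi^{-1}(A+B))=G$.

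I expect Step 1 to be the main obstacle: isolating the ``large'' part $\pi^{-1}(A)$ from the ``small'' part $\pi^{-1}(B)$, and in particular extracting the contradiction from Castelnuovo--Severi in exactly the form that consumes the numerical hypothesis, is the delicate point, whereas Step 2 is a routine genericity argument once $\Gamma$ is known to be finite.
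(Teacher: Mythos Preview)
Your Step 1 is where the argument breaks, and the Castelnuovo--Severi route does not give what you need. Applying Castelnuovo--Severi to $(\pi,\beta)$ yields at best $g_C\le(|G|-1)^2+2|G|g_D$, a bound involving neither $d$ nor $e$; there is no way to play it off against the hypothesis $|G|(d-e)\ge 2g_C+3$. The condition $\beta^{*}(A+B)=\pi^{*}(A+B)$ does not say that $\pi$ and $\beta$ agree as maps on any point---only that the two preimage divisors coincide---so it contributes no extra constraint to a Castelnuovo--Severi estimate. In fact the bare statement of Step~1 (for every reduced $B$ disjoint from $A$ and the branch locus) is false in general: nothing prevents an automorphism from swapping some points of $\pi^{-1}(A)$ with points of $\pi^{-1}(B)$.

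The paper does not use Castelnuovo--Severi here at all. Instead it uses the Riemann--Hurwitz fixed-point bound: a nontrivial automorphism of $C$ has at most $2g_C+2$ fixed points, hence is determined by its values on any $2g_C+3$ points. The paper then takes $\Gamma$ to be the set of $\alpha\in\Aut_K(C)$ with $|\alpha(\pi^{-1}(A))\cap\pi^{-1}(A)|\ge |G|(d-e)$, a \emph{weaker} condition than preserving $\pi^{-1}(A)$. Any $\alpha$ stabilizing $\pi^{-1}(A+B)$ lies in this $\Gamma$ by straight counting: since $|\pi^{-1}(B)|=|G|e$, at most $|G|e$ of the $|G|d$ points of $\alpha(\pi^{-1}(A))$ can land in $\pi^{-1}(B)$, so at least $|G|(d-e)$ land back in $\pi^{-1}(A)$. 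Finiteness of $\Gamma$ follows because $|G|(d-e)\ge 2g_C+3$ and there are only finitely many ways to send $2g_C+3$ points of the finite set $\pi^{-1}(A)$ back into it, each determining at most one automorphism. This elementary pigeonhole is precisely where the numerical hypothesis is consumed.

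Your Step~2 is essentially correct and close to the paper's, with one adjustment: since $\alpha\in\Gamma$ need not preserve $\pi^{-1}(A)$, one works with the condition $\alpha(\pi^{-1}(B))\subset\pi^{-1}(A+B)$ rather than $\alpha(\pi^{-1}(B))=\pi^{-1}(B)$, and checks that this cuts out a proper linear subspace $V_\alpha\subset|Z|$ for each $\alpha\in\Gamma\setminus G$.
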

\begin{proof}
    Passing to the normalization of $C$ over an algebraic closure of $K$, we may assume that $K$ is algebraically closed and $C$ is smooth over $K$. It follows from the Riemann--Hurwitz formula that a nontrivial automorphism of $C$ fixes at most $2g_C+2$ closed points.
    %(if $g_C\geq 2$, this follows from applying the Riemann--Hurwitz formula to the quotient map $C\to C/\langle\alpha\rangle$). 
    Thus, an automorphism of $C$ is determined by its action on any $2g_C+3$ points. Consider the subgroup $\Gamma\subset\Aut_K(C)$ consisting of those automorphisms $\alpha$ of $C$ such that $|\alpha(\pi^{-1}(A))\cap\pi^{-1}(A)|\geq |G|(d-e)$. We assume that $|G|(d-e)\geq 2g_C+3$, so this set is finite. For each $\alpha\in\Gamma$, let $V_{\alpha}\subset |Z|$ be the linear subspace consisting of those divisors $B$ such that $\alpha(\pi^{-1}(B))\subset \pi^{-1}(A+B)$. We claim that if $\alpha\notin G$ then $V_{\alpha}$ is a proper subspace of $|Z|$. Indeed, if $\alpha\notin G$, then a general divisor $B\in |Z|$ has the property that $\alpha(\pi^{-1}(B))\neq\pi^{-1}(B)$. Furthermore, a general $B\in |Z|$ also has the property that $B$ is disjoint from the divisor $\pi(\alpha^{-1}(\pi^{-1}(A)))$, and therefore $\alpha(\pi^{-1}(B))\cap\pi^{-1}(A)=\emptyset$.
    %Add ref for `` a general divisor $Y\in |W|$ has the property that $\alpha(\pi^{-1}(Y))\cap\pi^{-1}(Y)=\emptyset$''???
    
    Let $V\subset |Z|$ be the union of the $V_{\alpha}$ as $\alpha$ ranges over the finitely many elements of $\Gamma\setminus G$. Let $B\in |Z|$ be a divisor which is not in $V$. We claim that $\Aut_K(C;\pi^{-1}(A+B))=G$. Let $\alpha$ be an automorphism of $C$ such that $\alpha\in\Aut_K(C;\pi^{-1}(A+B))$. Then $\alpha$ preserves $\pi^{-1}(A+B)$, so $|\alpha(\pi^{-1}(A))\cap\pi^{-1}(A)|\geq |G|(d-e)$, and hence we have $\alpha\in\Gamma$. We also have that $\alpha(\pi^{-1}(B))\subset \pi^{-1}(A+B)$, so $B\in V_{\alpha}$. By our choice of $B$, we conclude that $\alpha\in G$.
\end{proof}

% \begin{corollary}
%     Let $C$ be a regular curve. Let $s$ be a positive integer with $s\geq 2g_C+4$ and let $P_1,\dots,P_s$ be a set of $s$ distinct closed points of $C$. For all but finitely many closed points $Q\in C$, the set $Z'=Z\cup\left\{Q\right\}$ has the property that the subgroup $\Aut_K(C;\pi^{-1}(Z'))$ is trivial.
% \end{corollary}
% \begin{proof}
%     Apply Proposition \ref{prop:generic divisor automorphisms} with $G=1$.
% \end{proof}

We continue to assume that $C$ is a regular curve over an infinite field $K$. Let $G$ be a finite \'{e}tale group scheme over $K$ and suppose given a faithful action of $G$ on $C$, or equivalently an inclusion $G\subset\sAut_K(C)$. As before, we set $D=C/G$ and let $\pi:C\to D$ denote the quotient map.

\begin{theorem}\label{thm:automorphisms of pencils}
    There exists a finite separable morphism $f:C\to\mathbf{P}^1$ such that $G=\sAut_K(C/\mathbf{P}^1)$ as subgroups of $\sAut_K(C)$.
\end{theorem}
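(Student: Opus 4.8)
The plan is to construct $f$ as a composite $C\xrightarrow{\pi}D\xrightarrow{t}\mathbf{P}^1$, where $\pi$ is the quotient map by $G$ and $t\in k(D)$ is a rational function chosen so that one fibre of $f$ over a $K$-rational point of $\mathbf{P}^1$ is exactly a divisor of the form $\pi^{-1}(A+B)$ to which Proposition~\ref{prop:generic divisor automorphisms} applies. Since $G$ is \'etale and acts faithfully on $C$, the quotient map $\pi\colon C\to D=C/G$ is finite and separable, $D$ is again a regular curve over $K$, and $\pi$ is ramified over only finitely many closed points of $D$. Any morphism $f$ of this shape factors through $\pi$, so it is $G$-invariant and $G\subseteq\sAut_K(C/\mathbf{P}^1)$ automatically; the whole content of the argument is the reverse inclusion.

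First I would fix a very ample divisor $Z$ on $D$, say of degree $e$, and a reduced effective divisor $A$ on $D$ of degree $d$ that is disjoint from the (finite) ramification locus of $\pi$, where $d$ is chosen large enough that $|G|(d-e)\geq 2g_C+3$ and $d+e\geq 2g_D+1$. A general member $B\in|Z|$ is reduced, is disjoint from $A$, and avoids the ramification locus of $\pi$. Moreover, the ``bad'' locus in $|Z|$ excluded by Proposition~\ref{prop:generic divisor automorphisms} is a proper closed subset of $|Z|_{\overline{K}}$, so, $K$ being infinite and hence $|Z|(K)$ Zariski dense, I may choose such a $B$ defined over $K$ for which
\[
    \Aut_{\overline{K}}\!\left(C_{\overline{K}};\,\pi^{-1}(A+B)_{\overline{K}}\right)=G(\overline{K}).
\]
Write $E=A+B$; it is reduced, $\deg E\geq 2g_D+1$, and $\pi$ is unramified over the support of $E$.

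Next I would produce $t$. Because $E$ is reduced with $\deg E\geq 2g_D+1$, a Riemann--Roch count gives $L(E-P)\subsetneq L(E)$ for every closed point $P$ in the support of $E$; since $K$ is infinite, $L(E)(K)$ is not covered by these finitely many proper subspaces, so there is $t\in L(E)\subseteq k(D)$ whose polar divisor is exactly $E$. If $\operatorname{char}K=p>0$, a non-separable such $t$ would differ from a $p$-th power by a constant, forcing all of its pole orders to be divisible by $p$ and contradicting the reducedness of $E$; hence $t\colon D\to\mathbf{P}^1$ is finite and separable with $t^{*}(\infty)=E$. Now set $f=t\circ\pi\colon C\to\mathbf{P}^1$. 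This is finite and separable, it factors through $\pi$, and $f^{*}(\infty)=\pi^{*}(E)=\pi^{-1}(E)$, where $\infty\in\mathbf{P}^1(K)$ and the last equality uses that $\pi$ is unramified over the support of $E$.

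It remains to prove $\sAut_K(C/\mathbf{P}^1)\subseteq G$. I would first observe that $\sAut_K(C/\mathbf{P}^1)$ is \'etale over $K$: its Lie algebra is $H^{0}\!\left(C,\mathcal{H}om_{\mathcal{O}_C}(\Omega_{C/\mathbf{P}^1},\mathcal{O}_C)\right)$, the space of global vector fields on $C$ tangent to the fibres of $f$, and this vanishes because $f$ separable makes $\Omega_{C/\mathbf{P}^1}$ a torsion $\mathcal{O}_C$-module while $\mathcal{O}_C$ is torsion-free; a finite-type group scheme over a field with trivial Lie algebra is \'etale. Then any $\alpha\in\sAut_K(C/\mathbf{P}^1)(\overline{K})$ satisfies $f_{\overline{K}}\circ\alpha=f_{\overline{K}}$, so $\alpha$ preserves the divisor $f^{*}(\infty)_{\overline{K}}=\pi^{-1}(E)_{\overline{K}}$, and therefore $\alpha\in\Aut_{\overline{K}}(C_{\overline{K}};\pi^{-1}(E)_{\overline{K}})=G(\overline{K})$ by the displayed equality. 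Thus the $K$-homomorphism $G\hookrightarrow\sAut_K(C/\mathbf{P}^1)$ of finite \'etale $K$-group schemes is a bijection on $\overline{K}$-points, hence an isomorphism, and $G=\sAut_K(C/\mathbf{P}^1)$. I expect the main obstacle to be the positive-characteristic bookkeeping in the construction: arranging that the generic divisor furnished by Proposition~\ref{prop:generic divisor automorphisms} is simultaneously reduced, the exact polar divisor of a \emph{separable} function on $D$, and unramified for $\pi$ --- which is, incidentally, precisely what forces $\sAut_K(C/\mathbf{P}^1)$ to have no infinitesimal part.
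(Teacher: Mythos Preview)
Your argument follows the paper's strategy---factor $f$ through the quotient $\pi\colon C\to D=C/G$ and invoke Proposition~\ref{prop:generic divisor automorphisms} to pin down a divisor $A+B$ on $D$ with $\Aut_{\overline K}(C_{\overline K};\pi^{-1}(A+B)_{\overline K})=G(\overline K)$---but you realise this divisor as a fibre differently. The paper just takes a pencil in $|Z|$ through $B$; you instead use Riemann--Roch to manufacture $t\in k(D)$ whose polar divisor is \emph{exactly} $A+B$, so that the fibre of $f$ over $\infty$ is $\pi^{-1}(A+B)$ on the nose. This makes the appeal to the proposition more transparent. You also supply an explicit Lie-algebra argument for the \'etaleness of $\sAut_K(C/\mathbf{P}^1)$, which the paper suppresses when it passes from ``bijection on $K^s$-points'' to ``isomorphism of group schemes.''

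Two small repairs are needed. First, the inequality $d+e\geq 2g_D+1$ does not alone give $L(E-P)\subsetneq L(E)$ when $P$ has large residue degree: Riemann--Roch yields this only once $\deg(E-P)>2g_D-2$, so you should take $d$ large enough that $d+e>2g_D-2+\deg P$ for every $P\in\operatorname{Supp}(E)$ (bounding $\deg P\le e$ for points of $B$ suffices). Second, over an imperfect $K$ an inseparable $t$ need not literally differ from a $p$-th power by a constant; what you really use is that $dt=0$ forces the pole order at any \emph{smooth} point of $D$ to be divisible by~$p$. So also arrange that $\operatorname{Supp}(E)$ avoids the (finite) non-smooth locus of $D$. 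With these adjustments the argument is complete.
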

\begin{proof}
    Fix a divisor $A\subset D$ of degree $d$ over which $\pi$ is unramified and a very ample divisor $Z\subset D$ of degree $e$ such that $|G|(d-e)\geq 2g_C+3$. Let $K^s$ be a separable closure of $K$. By Proposition \ref{prop:generic divisor automorphisms}, for a general divisor $B_{K^s}\in |Z_{K^s}|$, the divisor $A_{K^s}+B_{K^s}$ has the property that the inclusion $G(K^s)\subset\Aut_{K^s}(C_{K^s};\pi^{-1}(A_{K^s}+B_{K^s}))$ is an isomorphism. As $K$ is infinite, we may find a divisor $B\in |Z|$ whose base change to $K^s$ has this property. Consider a pencil in $|Z|$ containing $B$, and let $D\to\mathbf{P}^1$ be the resulting finite morphism. The composition $C\to D\to\mathbf{P}^1$ then has the property that the inclusion $G\subset\sAut_K(C/\mathbf{P}^1)$ induces a bijection on $K^s$-points, and therefore is an isomorphism.
\end{proof}

To treat the case of a finite field, we will use the following lemma which gives the existence of pencils with certain special properties.

\begin{lemma}\label{lem:pencil over a finite field}
    Let $C$ be a smooth curve over a finite field $K$. For any integer $N$, there exists a finite separable morphism $f:C\to\mathbf{P}^1$ such that
    \begin{enumerate}
        \item\label{item:1} the degree of $f$ is a prime $\geq N$,
        \item\label{item:2} $f$ is ramified only over $\infty$, and
        \item\label{item:3} there exist two closed points of $C$ at which $f$ is ramified to different degrees.
    \end{enumerate}
\end{lemma}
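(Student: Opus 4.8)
The plan is to imitate, on a general $C$, the following construction on $\mathbf{P}^1$. Write $p=\operatorname{char}K>0$. For $m\geq 1$ with $p\nmid m$, the rational function $f=x^p+x^{-m}$ has pole divisor $p\cdot\infty+m\cdot 0$, so it has degree $p+m$ as a map; and since $d(x^p)=0$ we get $df=d(x^{-m})=-mx^{-m-1}\,dx$, which has no zeros on $\mathbf{A}^1\setminus\{0\}$. Hence $f$ is ramified only over $\infty$, and the points $0,\infty$ of $f^{-1}(\infty)$ have ramification indices $m\neq p$; choosing $m$ so that $p+m$ is a prime $\geq N$ settles the case $C=\mathbf{P}^1$. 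Some such device is unavoidable: Riemann--Hurwitz shows that a finite separable morphism $C\to\mathbf{P}^1$ of degree $\geq 2$ whose ramification is entirely tame cannot be ramified over only one point, so~\eqref{item:2} forces us to exploit the identity $d(u^p)=0$.

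For general $C$ I would look for $f$ of the form $f=u^p+v$, where $u$ has a single pole (so that $d(u^p)=0$) and $v$ is arranged so that $df=dv$ vanishes only on $f^{-1}(\infty)$. Since $K$ is finite, $C$ has closed points of every sufficiently large degree, with equidistributed divisor classes; fix closed points $\infty_C$ and $0_C$, of degrees $a$ and $b$, with $a\geq 2$ and $\gcd(a,b)=\gcd(a,p)=1$. By Riemann--Roch choose $u$ with a pole of order exactly $m_0$ at $\infty_C$ and no other pole (possible for $m_0$ large), so $u^p$ has a pole of order $pm_0$ at $\infty_C$ and is regular elsewhere. Next look for $v$ with a pole of order exactly $m_1$ at $0_C$, $p\nmid m_1$, no other pole, and with $dv$ vanishing at $\infty_C$ to the order $s:=(2g_C-2+(m_1+1)b)/a$, which is as large as is numerically consistent with $dv$ having its only pole at $0_C$ and $\deg K_C=2g_C-2$. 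Such a $v$ exists precisely when $s\in\mathbf{Z}_{\geq 0}$, the class of $s\,\infty_C-(m_1+1)\,0_C-K_C$ is trivial in $\operatorname{Pic}(C)$, and the corresponding differential is exact; when it does, $\operatorname{div}(dv)=s\,\infty_C-(m_1+1)\,0_C$, so $df=dv$ is nonvanishing away from $\{\infty_C,0_C\}$. Then $f=u^p+v$ has pole divisor $pm_0\,\infty_C+m_1\,0_C$, hence degree $\ell=pm_0a+m_1b$; it is separable because $df=dv\neq 0$; it is ramified only over $\infty$ because the zeros of $df$ all lie in $f^{-1}(\infty)$; and $\infty_C,0_C$ lie over $\infty$ with ramification indices $pm_0$ and $m_1$, which differ since $p\mid pm_0$ but $p\nmid m_1$. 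This gives~\eqref{item:2} and~\eqref{item:3}.

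It remains to choose the data so that $\ell=pm_0a+m_1b$ is in addition a prime $\geq N$. The $\operatorname{Pic}$-triviality is a condition in the finite group $\operatorname{Pic}^0(C)(K)$, to be met by moving $0_C$ through the closed points of degree $b$ (whose classes fill out $\operatorname{Pic}^b(C)(K)$ once $b$ is large) while also adjusting $m_1,m_0,a,b$; the integrality of $s$ is a congruence on $m_1$ modulo $a$; and once the obvious coprimality conditions hold, Dirichlet's theorem on primes in arithmetic progressions provides primes $\ell\geq N$ of the required form. The main obstacle is exactly this bookkeeping --- keeping the degree prime while concentrating all ramification over a single point of $\mathbf{P}^1$, which cannot be done tamely and so must be forced through the wild pole contributed by $u^p$ --- together with the one genuinely delicate point, namely ensuring that the differential $dv$ above is exact (equivalently, that its image under the Cartier operator vanishes), which may require a further genericity argument or a small additional adjustment of the chosen points.
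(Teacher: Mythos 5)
Your overall strategy (use $d(u^p)=0$ to concentrate all ramification over $\infty$, and Dirichlet to make the degree prime) is in the right spirit, but the proposal has a genuine gap at its central existence step, and you have in effect flagged it yourself. Because you force $u$ to have a pole at the single point $\infty_C$, the fiber $f^{-1}(\infty)$ is just $\{\infty_C,0_C\}$, so \emph{every} zero of $df=dv$ must sit at $\infty_C$; you therefore need a function $v$ with $\operatorname{div}(dv)=s\,\infty_C-(m_1+1)\,0_C$ exactly. Linear equivalence to the canonical class does produce a meromorphic differential $\omega$ with precisely that divisor (unique up to scalar), but nothing forces this particular $\omega$ to be exact, i.e.\ killed by the Cartier operator, and exactness is not a condition you can meet by ``moving $0_C$'' or adjusting $m_0,m_1,a,b$: it is a closed, high-codimension condition on a one-dimensional space of candidates, and no mechanism in your argument produces it. This is not a routine loose end to be tidied by ``a further genericity argument'': it is the entire difficulty of the lemma, and in this rigid form (all zeros of $df$ at one prescribed point, prescribed pole at one other point, prime total degree) it is at least as hard as the external input the paper relies on.

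The paper avoids exactly this rigidity. It first cites Kedlaya's theorem to get a finite separable $g:C\to\mathbf{P}^1$ ramified over a single $K$-point away from $\infty$, and then perturbs by a $p$-th power: writing $g=s_1/s_0$, it chooses (by Riemann--Roch) sections $t_0,t_1$ with no common zeros such that $t_0$ vanishes on the whole branch divisor $B$ of $g$ and at two extra points $P,Q$ to \emph{different} orders, and sets $f=g+(t_1/t_0)^p$. Then $df=dg$, so any ramification point of $f$ outside $f^{-1}(\infty)$ would lie in $B$; but $B$ is contained in the polar locus of $(t_1/t_0)^p$, hence in $f^{-1}(\infty)$ --- the zeros of $df$ are \emph{absorbed} into the poles of $f$ rather than being forced to coincide with them, which is what removes the exact-differential constraint you ran into. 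The different vanishing orders of $t_0$ at $P$ and $Q$ give property~\eqref{item:3}, and Dirichlet applied to $\deg f=d+pe$ gives property~\eqref{item:1}. If you want to salvage your construction, the fix is the same flexibility: allow the $p$-th power part to have poles along the whole zero locus of the auxiliary differential (plus two chosen points with different orders), rather than at a single point; but then you still need a separable pencil unramified over some $K$-rational point of $\mathbf{P}^1$ to start from, which over a finite field is not automatic and is what the appeal to Kedlaya supplies.
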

\begin{proof}
    % By Lang's theorem, $C$ admits an invertible sheaf of degree $1$, say $\ms L$. Fix closed points $P,Q\in C$. By Riemann--Roch, there exists an integer $d$ and sections $s_0,s_1\in\H^0(C,\ms L^d)$ such that, writing $v_{s_0}(R)$ for the order of vanishing of $s_0$ at a point $R\in C$, we have that $v_{s_0}(P)>v_{s_0}(Q)\geq 2$ and such that $s_0$ and $s_1$ have no common zeros. Let $g:C\to\mathbf{P}^1$ be the resulting finite morphism. Then $g$ is finite and separable of degree $d$, and $P$ and $Q$ are both ramified, with different ramification degrees. Inspired by results of Kedlaya (REF), we will next modify $g$ so that it is only ramified over $\infty$, while still preserving its other properties.
    By Lang's theorem, $C$ admits an invertible sheaf of degree $1$, say $\ms L$. 
    By a result of Kedlaya \cite[Theorem 1]{MR2092132}, there exists a positive integer $d$ and sections $s_0,s_1\in\H^0(C,\ms L^d)$ which have no common zeros such that the resulting morphism $g:C\to\mathbf{P}^1$ is finite and separable and is ramified only over a single $K$--point of $\mathbf{P}^1$, which we may take to be not equal to $0$ or $\infty$. Inspired by Kedlaya's methods, we will next modify $g$ to ensure that the desired properties hold.  Let $B\subset C$ be the branching divisor of $g$, let $Z_0=V(s_1)\subset C$ be the zero divisor of $g$, let $Z_{\infty}=V(s_0)$ be the pole divisor of $g$, and fix distinct closed points $P,Q\in C$ which are not contained in $B+Z_0+Z_{\infty}$. By Riemann--Roch, for all sufficiently large integers $e$ we may find sections $t_0,t_1\in\H^0(C,\ms L^e)$ which have no common zeros such that $t_0$ vanishes on each point of $B+P+Q$ and does not vanish on any point of $Z_{\infty}$, and such that the orders of vanishing of $t_0$ at $P$ and at $Q$ are different. By Dirichlet's theorem on primes in arithmetic progressions, we may further assume that $d+pe$ is prime and is $\geq N$. Consider the sections $u_0,u_1\in\H^0(C,\ms L^{d+pe})$ defined by $u_0=s_0t_0^p$ and $u_1=s_1t_0^p+s_0t_1^p$. As $t_0$ and $t_1$ have no common zeros and $t_0$ does not vanish on any point of $Z_{\infty}=V(s_0)$, the sections $u_0$ and $u_1$ have no common zeros, and therefore define a finite morphism $f:C\to\mathbf{P}^1$. We claim that $f$ has the desired properties.
    
    The degree of $f$ is $d+pe$, which is prime and is $\geq N$, so~\eqref{item:1} holds. To check~\eqref{item:2}, suppose that $R\in C$ is a point at which $f$ ramifies. Suppose for the sake of a contradiction that $u_0$ does not vanish at $R$. Then the differential $d(u_1/u_0)$ is defined at $R$, and also vanishes at $R$. We note that $u_1/u_0=s_1/s_0+(t_1/t_0)^p$, and therefore $d(u_1/u_0)=d(s_1/s_0)$. Thus $d\left(s_1/s_0\right)$ vanishes at $R$, and so $R\in B$. But by assumption $t_0$ and hence $u_0$ vanishes at every point of $B$, which is a contradiction. We conclude that $u_0$ vanishes at $R$, and therefore $f(R)=\infty$. It remains to show that~\eqref{item:3} holds. We have that $t_0^p$ vanishes at $P$ and $Q$ to different orders. As $t_0$ does not vanish at any point of $Z_{\infty}=V(s_0)$, we have that $s_0$ does not vanish at $P$ or at $Q$. Therefore $u_0$ vanishes to different orders at $P$ and at $Q$, and so $P$ and $Q$ are ramified under $f$ to different orders.
\end{proof}

\section{Proofs of results}\label{sec:proofs of results}

In this section we give the proofs of the results stated in \S\ref{sec:intro}. We begin with the proof of Theorem \ref{thm:main theorem for curves}. This result immediately implies Theorem \ref{thm:second corollary to main theorem for curves} and Theorem \ref{thm:first corollary to main theorem for curves}. Theorem \ref{thm:inverse Galois} is a translation of Theorem \ref{thm:main theorem for curves} into the setting of function fields, and so will also follow.

Let $K$ be a field, let $X$ be a regular curve over $K$, and let $G$ be a finite \'{e}tale group scheme over $K$. Using Theorem \ref{thm:curve with a free action}, we may find a regular curve $C$ over $K$ equipped with a free $G$--action and a finite separable morphism $\pi:C\to X$ which is $G$--invariant. The following result shows that $C$ may be refined to a curve over $X$ whose automorphism group is exactly $G$, and in particular immediately implies Theorem \ref{thm:main theorem for curves}.
% \[
%     \begin{tikzcd}
%         C\arrow{d}[swap]{\pi}\arrow{r}{f}&\mathbf{P}^1\\
%         X&
%     \end{tikzcd}
% \]
% such that $\pi$ is $G$--invariant and $\sAut_K(C/\mathbf{P}^1)=G$ as subgroups of $\sAut_K(C)$.

\begin{theorem}\label{thm:finite cover of curve}
    There exists a regular curve $E$ over $K$ equipped with a free $G$--action and a finite $G$--equivariant morphism $E\to C$ such that
    \[
        G\cong\sAut_K(E/X)=\sAut_K(E).
    \]
    Moreover, we may choose $E$ to have arbitrarily large genus, and if $C$ is smooth, then we may take $E$ to be smooth as well.
\end{theorem}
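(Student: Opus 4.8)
\textbf{(1) A $G$-invariant pencil with $\sAut_K(C/\mathbf{P}^1)=G$.} The plan is to obtain $E$ as an iterated fibre product of $C$ with Poonen's automorphism-free curve, set up so that the results of \S\ref{sec:automorphisms of curves} apply. The first step is to produce a $G$-invariant finite separable morphism $f\colon C\to\mathbf{P}^1$ with $\sAut_K(C/\mathbf{P}^1)=G$ inside $\sAut_K(C)$. Since the given $G$-action on $C$ is free it is faithful, so when $K$ is infinite this is exactly Theorem \ref{thm:automorphisms of pencils}. When $K$ is finite, set $D=C/G$; then $\pi\colon C\to D$ is finite \'etale and geometrically Galois with $\sAut_K(C/D)=G$, Lemma \ref{lem:pencil over a finite field} applied to the smooth curve $D$ produces a finite separable $h\colon D\to\mathbf{P}^1$ of prime degree ramified over a single $K$-point with two points of distinct ramification degree over it, and Lemma \ref{lem:aut of pencil finite field case} then identifies $\sAut_K(C/\mathbf{P}^1)$ with $\sAut_K(C/D)=G$ for the composite $f\colon C\to\mathbf{P}^1$. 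In either case $f$ is $G$-invariant; write $n=\deg(C/\mathbf{P}^1)$, which one checks may be taken $\geq 2$.

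\textbf{(2) Incompressible refinement.} Next I fix a large integer $\lambda$ (constrained below) and apply Proposition \ref{prop:incompressible lemma} to $f$ (when $K$ is finite, $f$ is ramified over a single point of $\mathbf{P}^1$, so the hypothesis required there is met). This yields $\varphi\colon\mathbf{P}^1\to\mathbf{P}^1$ such that $C_1:=C\times_{\mathbf{P}^1,\varphi}\mathbf{P}^1$ is a regular curve (smooth if $C$ is), the refinement square satisfies $(\ast\ast)$, and $f_1\colon C_1\to\mathbf{P}^1$ is $\lambda$-incompressible. Then $\deg(C_1/\mathbf{P}^1)=n$ and $g_{C_1}\geq\lambda$, and Proposition \ref{prop:key galois proposition} gives $\sAut_K(C_1/\mathbf{P}^1)\cong\sAut_K(C/\mathbf{P}^1)=G$ via pullback; concretely the free $G$-action on $C$ pulls back to a free action on $C_1$ for which $C_1\to C$ is $G$-equivariant, $C_1\to X$ is $G$-invariant, and $\sAut_K(C_1/\mathbf{P}^1)=G$.

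\textbf{(3) Fibre product with Poonen's curve.} Let $Y\to\mathbf{P}^1$ be as in Theorem \ref{thm:curve with no auts}: $Y$ smooth, $\sAut_K(Y)=1$, and the map totally ramified over $\infty$; note $g_Y\geq 2$, since a curve with trivial automorphism group scheme has genus $\geq 3$. If $K$ is infinite, compose $Y\to\mathbf{P}^1$ with a general automorphism of $\mathbf{P}^1$ so its ramification locus becomes disjoint from that of $f_1$, and invoke Lemma \ref{lem:fiber product lemma}(1). If $K$ is finite, first use Lemma \ref{lem:pencil over a finite field} to replace $Y\to\mathbf{P}^1$ by a morphism of prime degree $>n$ ramified over a single $K$-point, translated off the ramification locus of $f_1$, and invoke Lemma \ref{lem:fiber product lemma}(2). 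Either way $E:=C_1\times_{\mathbf{P}^1}Y$ is a regular curve (smooth if $C$ is) and the fibre-product square satisfies $(\ast\ast)$; the free $G$-action on $C_1$ pulls back to $E$, making $E\to C_1\to C$ a $G$-equivariant finite morphism and $E\to X$ a $G$-invariant one.

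\textbf{(4) Conclusion.} Proposition \ref{prop:key galois proposition} gives $\sAut_K(E/Y)\cong\sAut_K(C_1/\mathbf{P}^1)=G$, realized by the pulled-back action, so $\sAut_K(E/Y)=G$. Applying Lemma \ref{lem:Galois fact} to the $(\ast\ast)$-square and using that $f_1$ is $\lambda$-incompressible shows that any factorization $E\to V\to Y$ with $V\to Y$ not an isomorphism has $g_V\geq\lambda$; having chosen $\lambda\geq n^2+2(g_Y-1)n+2$ at the start, this means $E\to Y$ satisfies $(\ast)$. Since $Y$ is smooth with $g_Y\geq 2$, Corollary \ref{cor:SES of automorphism groups} yields $1\to\sAut_K(E/Y)\to\sAut_K(E)\to\sAut_K(Y)=1$, so $\sAut_K(E)=\sAut_K(E/Y)=G$; then $G\subseteq\sAut_K(E/X)\subseteq\sAut_K(E)=G$ forces $\sAut_K(E/X)=G$, and $g_E\geq g_{C_1}\geq\lambda$ may be taken arbitrarily large. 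I expect the real work to be Step 3 over a finite field — simultaneously arranging disjoint ramification loci, the prime-degree inequality, and condition $(\ast\ast)$ — together with checking that the constraints on $\lambda$ in Step 2 do not circle back on the data they are meant to bound.
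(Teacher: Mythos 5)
Your proposal is correct and follows essentially the same route as the paper's proof: a $G$-invariant pencil with $\sAut_K(C/\mathbf{P}^1)=G$ (via Theorem \ref{thm:automorphisms of pencils} over infinite fields, Lemmas \ref{lem:pencil over a finite field} and \ref{lem:aut of pencil finite field case} over finite fields), a $\lambda$-incompressible refinement from Proposition \ref{prop:incompressible lemma}, a fiber product with Poonen's curve, and then Proposition \ref{prop:key galois proposition}, Lemma \ref{lem:Galois fact}, and Corollary \ref{cor:SES of automorphism groups} to pin down $\sAut_K(E)$. The circularity you worry about at the end does not arise, since $Y$ (hence $g_Y$) and $n=\deg(f)$ can be fixed before choosing $\lambda$, exactly as in the paper; only the map $Y\to\mathbf{P}^1$ is adjusted afterwards, which leaves $g_Y$ unchanged.
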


We will first give the proof in the case when $K$ is infinite.
%We will extend this to the case when $K$ is finite in the next section.

\begin{proof}[Proof of Theorem \ref{thm:finite cover of curve} over an infinite field]
    Suppose that $K$ is infinite. By Theorem \ref{thm:automorphisms of pencils}, we may find a finite separable $G$--invariant morphism $f:C\to\mathbf{P}^1$ such that $G=\sAut_K(C/\mathbf{P}^1)$ as subgroups of $\sAut_K(C)$. Fix an integer $\lambda$. As $K$ is infinite, $f$ is necessarily unramified over infinitely many $K$--points of $\mathbf{P}^1$. We may therefore apply Proposition \ref{prop:incompressible lemma} to the morphism $f$ and $\lambda$, yielding a morphism $\varphi:\mathbf{P}^1\to\mathbf{P}^1$.
    %which is totally ramified over some $K$-point and with the property that $D=C\times_{\mathbf{P}^1}\mathbf{P}^1$ is a regular curve and the base change $f':D\to\mathbf{P}^1$ is $\lambda$--incompressible.
    Using Theorem \ref{thm:curve with no auts}, we choose a smooth curve $Y$ such that $\sAut_K(Y)=1$ and a finite separable morphism $g:Y\to\mathbf{P}^1$ which is totally ramified over $\infty$. After composing with an automorphism of $\mathbf{P}^1$ we may assume that the ramification locus of $g$ is disjoint from the branch locus of $\varphi$. Set $E=D\times_{\mathbf{P}^1}Y$, so that we have a diagram
    \begin{equation}\label{eq:a rectangle of curves}
        \begin{tikzcd}
            E\arrow{d}\arrow{r}{f''}&Y\arrow{d}{g}\\
            D\arrow{d}\arrow{r}{f'}&\mathbf{P}^1\arrow{d}{\varphi}\\
            C\arrow{r}{f}\arrow{d}[swap]{\pi}&\mathbf{P}^1\\
            X
        \end{tikzcd}
    \end{equation}
    of $K$--schemes with Cartesian squares. By our choice of $\varphi$ and Lemma \ref{lem:fiber product lemma}, $D$ and $E$ are regular and geometrically integral, the two squares each satisfy condition $(\ast\ast)$, and if $C$ is smooth, then so are $D$ and $E$. Pullback along $\varphi\circ g$ induces a morphism
     \begin{equation}\label{eq:pullback}
        G=\sAut_K(C/\mathbf{P}^1)\xrightarrow{(\varphi\circ g)^*}\sAut_K(E/Y).
     \end{equation}
     \begin{claim}\label{claim:1}
        The pullback map~\eqref{eq:pullback} is an isomorphism.
     \end{claim}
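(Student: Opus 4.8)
The plan is to factor the pullback map~\eqref{eq:pullback} through the automorphism group scheme of the intermediate curve $D$, recognizing each of the two resulting maps as an instance of Proposition~\ref{prop:key galois proposition}. Write $\mathbf{P}^1_0$ for the copy of $\mathbf{P}^1$ in~\eqref{eq:a rectangle of curves} which is simultaneously the target of $f'$ and the source of $\varphi$, and consider the relative automorphism group scheme $\sAut_K(D/\mathbf{P}^1_0)$ attached to $f'\colon D\to\mathbf{P}^1_0$.

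First I would apply Proposition~\ref{prop:key galois proposition} to the middle square of~\eqref{eq:a rectangle of curves}, which exhibits $D$ as the base change of $f\colon C\to\mathbf{P}^1$ along $\varphi$. We have already observed that this square satisfies condition $(\ast\ast)$, so pullback along $\varphi$ is an isomorphism $\sAut_K(C/\mathbf{P}^1)\iso\sAut_K(D/\mathbf{P}^1_0)$. Then I would apply Proposition~\ref{prop:key galois proposition} a second time, now to the top square of~\eqref{eq:a rectangle of curves}, which exhibits $E$ as the base change of $f'\colon D\to\mathbf{P}^1_0$ along $g$ and which likewise satisfies $(\ast\ast)$; this yields an isomorphism $\sAut_K(D/\mathbf{P}^1_0)\iso\sAut_K(E/Y)$ given by pullback along $g$.

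It then remains to verify that the composite of these two isomorphisms agrees with the map~\eqref{eq:pullback}, i.e.\ that $(\varphi\circ g)^* = g^*\circ\varphi^*$ as maps of group schemes. This is purely formal: since both squares in~\eqref{eq:a rectangle of curves} are Cartesian, the outer rectangle identifies $E$ with the fiber product $C\times_{\mathbf{P}^1,\,\varphi\circ g}Y$, and for a $T$-point $\alpha$ of $\sAut_K(C/\mathbf{P}^1)$ the automorphism $(\alpha\times_{\mathbf{P}^1}\mathrm{id}_{\mathbf{P}^1_0})\times_{\mathbf{P}^1_0}\mathrm{id}_{Y}$ of $E$ coincides with $\alpha\times_{\mathbf{P}^1}\mathrm{id}_{Y}$ by transitivity of fiber products. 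Composing the two isomorphisms above then shows that~\eqref{eq:pullback} is an isomorphism, which is the claim. I do not anticipate a genuine obstacle here: the only subtlety is the bookkeeping of which copy of $\mathbf{P}^1$ each relative automorphism group scheme is taken over, and the mathematical substance is entirely the two applications of Proposition~\ref{prop:key galois proposition}.
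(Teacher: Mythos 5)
Your proposal is correct and matches the paper's own argument, which likewise applies Proposition~\ref{prop:key galois proposition} first to the middle square (pullback along $\varphi$) and then to the top square (pullback along $g$), using that both squares satisfy condition $(\ast\ast)$. Your extra verification that $(\varphi\circ g)^* = g^*\circ\varphi^*$ via transitivity of fiber products is a fine, purely formal elaboration of what the paper leaves implicit.
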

     \begin{proof}
            We have shown that the two squares each satisfy condition $(\ast\ast)$. The claim follows by applying Proposition \ref{prop:key galois proposition} first to $\varphi$ and then to $g$.
     \end{proof}
    
    \begin{claim}\label{claim:2}
       If $\lambda$ is sufficiently large, then the inclusion
       \[
            \sAut_K(E/Y)\subset\sAut_K(E)
       \]
       is an isomorphism.
    \end{claim}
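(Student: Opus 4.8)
The plan is to apply Corollary~\ref{cor:SES of automorphism groups} to the morphism $f'':E\to Y$ appearing in~\eqref{eq:a rectangle of curves}. The only hypothesis of that corollary requiring real work is that $f''$ satisfy condition~$(\ast)$; the remaining hypothesis holds since $Y$ is smooth and may be taken of genus $\geq 2$ (inspecting Poonen's construction), so that the second alternative in Corollary~\ref{cor:SES of automorphism groups} applies --- alternatively, if $C$ is smooth then $E$ is smooth and hence $\sAut_K(E)$ is smooth. Granting $(\ast)$, Corollary~\ref{cor:SES of automorphism groups} produces an exact sequence of $K$-group schemes
\[
    1\to\sAut_K(E/Y)\to\sAut_K(E)\to\sAut_K(Y),
\]
and since $\sAut_K(Y)=1$ by Theorem~\ref{thm:curve with no auts}, the inclusion $\sAut_K(E/Y)\subset\sAut_K(E)$ is an isomorphism, which is the claim.

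To verify condition~$(\ast)$ for $f'':E\to Y$, the point is to propagate the $\lambda$-incompressibility of $f':D\to\mathbf{P}^1$ (Proposition~\ref{prop:incompressible lemma}) up the diagram. Let $E\to V\to Y$ be a factorization of $f''$ through a regular curve $V$ with $V\to Y$ not an isomorphism. The top square of~\eqref{eq:a rectangle of curves} is Cartesian and satisfies condition~$(\ast\ast)$, so Lemma~\ref{lem:Galois fact} (applied with $F=E$, $E=Y$, $C=D$, $D=\mathbf{P}^1$) yields a regular curve $W$, a factorization $D\to W\to\mathbf{P}^1$ of $f'$, and a commutative diagram
\[
    \begin{tikzcd}
        E\arrow{d}\arrow{r}&V\arrow{d}\arrow{r}&Y\arrow{d}{g}\\
        D\arrow{r}&W\arrow{r}&\mathbf{P}^1
    \end{tikzcd}
\]
with both squares Cartesian. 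From the right square, $\deg(V/Y)=\deg(W/\mathbf{P}^1)$; since $V\to Y$ is not an isomorphism this common degree exceeds $1$, so $W\to\mathbf{P}^1$ is not an isomorphism, and the $\lambda$-incompressibility of $f'$ gives $g_W\geq\lambda$. The projection $V=W\times_{\mathbf{P}^1}Y\to W$ is finite, so $g_V\geq g_W\geq\lambda$. Finally $\deg(V/Y)=\deg(W/\mathbf{P}^1)\leq\deg(D/\mathbf{P}^1)=\deg(C/\mathbf{P}^1)$, which, together with $g_Y$, is independent of $\lambda$; hence the right-hand side of the inequality in~$(\ast)$, namely $\deg(V/Y)^2+2(g_Y-1)\deg(V/Y)+2$, is bounded above by a constant $\Lambda$ depending only on $\deg(C/\mathbf{P}^1)$ and $g_Y$. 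Choosing $\lambda\geq\Lambda$ in the construction of $D$ therefore forces $g_V\geq\lambda\geq\Lambda\geq\deg(V/Y)^2+2(g_Y-1)\deg(V/Y)+2$, so $f''$ satisfies~$(\ast)$.

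The main obstacle is exactly this verification of~$(\ast)$: one must run Lemma~\ref{lem:Galois fact} in the correct direction so that a nontrivial intermediate curve between $E$ and $Y$ is matched with a nontrivial intermediate curve between $D$ and $\mathbf{P}^1$ of bounded degree, at which point $\lambda$-incompressibility supplies the genus lower bound and the bounded degree makes the threshold $\Lambda$ uniform in $\lambda$. One should take care that the curve $V$ returned by Lemma~\ref{lem:Galois fact} is genuinely the fibre product $W\times_{\mathbf{P}^1}Y$, so that the finite map $V\to W$ used in the genus comparison exists, and that the ordering of choices is such that $\deg(C/\mathbf{P}^1)$ and $g_Y$ are fixed before $\lambda$. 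The rest --- passing from~$(\ast)$ to the exact sequence via Corollary~\ref{cor:SES of automorphism groups} and invoking $\sAut_K(Y)=1$ --- is immediate.
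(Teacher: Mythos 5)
Your proposal is correct and follows essentially the same route as the paper: verify condition $(\ast)$ for $f'':E\to Y$ by pushing a factorization $E\to V\to Y$ down to $D\to W\to\mathbf{P}^1$ via Lemma~\ref{lem:Galois fact}, use the $\lambda$-incompressibility of $f'$ to get $g_V\geq g_W\geq\lambda$, bound $\deg(V/Y)$ by $\deg(C/\mathbf{P}^1)$ so the threshold is independent of $\lambda$, and then conclude via Corollary~\ref{cor:SES of automorphism groups} together with $\sAut_K(Y)=1$. The only (harmless) difference is that you spell out the smoothness/genus hypothesis of Corollary~\ref{cor:SES of automorphism groups} and the ordering of the choices of $g_Y$, $\deg(C/\mathbf{P}^1)$, and $\lambda$, which the paper leaves implicit with the explicit bound $\lambda\geq\deg(C/\mathbf{P}^1)^2+2(g_Y-1)\deg(C/\mathbf{P}^1)+2$.
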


    Before proving Claim \ref{claim:2}, we explain how Claims \ref{claim:1} and \ref{claim:2} together imply that the curve $E$ and the finite morphism $E\to C$ have the desired properties. Combining Claims \ref{claim:1} and \ref{claim:2}, we get an isomorphism
    \[
        G=\sAut_K(C/\mathbf{P}^1)\xrightarrow[(\varphi\circ g)^*]{\sim}\sAut_K(E/Y)\iso\sAut_K(E).
    \]
    Furthermore, under the pullback map $(\varphi\circ g)^*$, the subgroup $\sAut_K(C/X)\cap\sAut_K(C/\mathbf{P}^1)$ is mapped into the subgroup $\sAut_K(E/X)\cap\sAut_K(E/Y)=\sAut_K(E/X)$. But, by assumption, the subgroup $G\subset\sAut_K(C)$ is contained in $\sAut_K(C/X)$, hence we have $G=\sAut_K(C/X)\cap\sAut_K(C/\mathbf{P}^1)$, and so $\sAut_K(E/X)$ contains the image of $G$. It follows that the pullback map induces isomorphisms 
    \[
        G\iso\sAut_K(E/X)\iso\sAut_K(E).
    \]
    Finally, $f'$ is $\lambda$-incompressible, so we have the inequalities $g_{E}\geq g_D\geq\lambda$, and therefore $E$ may be chosen to have arbitrarily large genus.

    It remains only to prove Claim \ref{claim:2}.

    \begin{proof}[Proof of \ref{claim:2}]
    We will show that if $\lambda$ satisfies the inequality
        \begin{equation}\label{eq:bound on lambda}
            %\lambda=q^2+2(g_D-1)q+1
            \lambda\geq\deg(C/\mathbf{P}^1)^2+2(g_Y-1)\deg(C/\mathbf{P}^1)+2
        \end{equation}
    then the morphism $f'':E\to Y$ satisfies condition $(\ast)$. Suppose given a factorization
        \[
            E\to V\to Y
        \]
        of $f''$ where $V$ is a regular curve and $V\to Y$ is not an isomorphism. By Lemma \ref{lem:Galois fact}, we may find a regular curve $W$ and a diagram
        \[
            \begin{tikzcd}
                E\arrow{d}\arrow{r}\arrow[bend left=25]{rr}{f''}&V\arrow{d}\arrow{r}&Y\arrow{d}{g}\\
                D\arrow{r}\arrow[bend right=25]{rr}[swap]{f'}&W\arrow{r}&\mathbf{P}^1
            \end{tikzcd}
        \]
        where both squares are Cartesian. It follows that $W\to\mathbf{P}^1$ is also not an isomorphism.
        %where $W$ is the normal curve whose function field is the intersection of the function fields of $D$ and $V$ in the function field of $E$. The map $D\to W$ is flat, and $D$ is smooth, so in fact $W$ is smooth as well (the distinction between a smooth and a normal curve is only relevant in the case when $K$ is imperfect).
        As $f'$ is $\lambda$--incompressible, we have $g_W\geq\lambda$. We obtain the inequalities
    \begin{align*}
        g_{V}\geq g_{W}\geq \lambda&\geq\deg(C/\mathbf{P}^1)^2+2(g_Y-1)\deg(C/\mathbf{P}^1)+2\\
        &=\deg(E/Y)^2+2(g_Y-1)\deg(E/Y)+2\\
        &\geq\deg(V/Y)^2+2(g_Y-1)\deg(V/Y)+2
    \end{align*}
    where the last inequality is a consequence of the bounds $\deg(E/Y)\geq\deg(V/Y)$ and $g_Y\geq 1$ (the latter is implied by the fact that $Y$ has trivial automorphism group scheme). This shows that $f''$ satisfies $(\ast)$, as claimed.

    Now, if $\lambda$ is large enough so that the inequality~\eqref{eq:bound on lambda} holds, then by Corollary \ref{cor:SES of automorphism groups} we have an exact sequence
    \[
        1\to\sAut_K(E/Y)\to\sAut_K(E)\to\sAut_K(Y).
    \]
    By our choice of $Y$, we have $\sAut_K(Y)=1$, and therefore the inclusion $\sAut_K(E/Y)\subset\sAut_K(E)$ is an isomorphism.
    \end{proof}
\end{proof}

We now give the proof in the case of a finite ground field. The construction is generally similar to the infinite case, but requires some modifications. 
 
\begin{proof}[Proof of Theorem \ref{thm:finite cover of curve} over a finite field]
    Suppose that $K$ is finite. We first apply Lemma \ref{lem:pencil over a finite field} to $C/G$ to obtain a finite separable morphism $C/G\to\mathbf{P}^1$ of prime degree which is ramified only over $\infty$ and which is ramified to different degrees over two closed points in some fiber. We let $f:C\to\mathbf{P}^1$ denote the composition of this pencil with the quotient map $C\to C/G$. By Lemma \ref{lem:aut of pencil finite field case}, we have $G=\sAut_K(C/\mathbf{P}^1)$ as subgroups of $\sAut_K(C)$ (here we use our assumption that the $G$--action on $C$ was free, so that $C\to C/G$ is \'{e}tale). As $\mathbf{P}^1$ has at least three $K$-points over any field, $f$ is unramified over at least two $K$--points. Let $\lambda$ be an integer, let $\varphi:\mathbf{P}^1\to\mathbf{P}^1$ be a morphism obtained by applying Proposition \ref{prop:incompressible lemma} to $f$ and $\lambda$, and set $D=C\times_{\mathbf{P}^1,\varphi}\mathbf{P}^1$. Choose a smooth curve $Y$ over $K$ with trivial automorphism group scheme and apply Lemma \ref{lem:pencil over a finite field} to $Z$ to obtain a morphism $g:Y\to\mathbf{P}^1$ whose ramification locus is disjoint from that of $f':D\to\mathbf{P}^1$ and whose degree is a prime which is $>\deg(f)$. Set $E=D\times_{\mathbf{P}^1}Y$, so that we have a diagram~\eqref{eq:a rectangle of curves} with Cartesian squares. By our choice of $\varphi$ and Lemma \ref{lem:fiber product lemma}, both $D$ and $E$ are smooth geometrically integral curves over $K$ and each of the two squares satisfies condition $(\ast\ast)$. The remainder of the proof is identical to the infinite field case: by construction, $f'$ is $\lambda$-incompressible, so $g_E\geq g_D\geq\lambda$ and hence we may choose $E$ to have arbitrarily large genus, and the proofs of Claims \ref{claim:1} and \ref{claim:2} apply without change to show that $G\cong\sAut_K(E/X)$.
\end{proof}

%\subsection{Proof of Theorem \ref{thm:reduction of inverse Galois problem, curves version}}

We now prove Theorem \ref{thm:reduction of inverse Galois problem, curves version}. This result immediately implies the infinite field case of Theorem \ref{thm:reduction of inverse Galois problem}, and so this will complete the proofs of the results claimed in \S\ref{sec:intro}. We recall the notation: $K$ is an infinite field, $G$ is a finite \'{e}tale group scheme over $K$, $C$ and $Y$ are regular curves over $K$, and $f:C\to\mathbf{P}^1$ is a geometrically Galois morphism with Galois group scheme $G$.

\begin{proof}[Proof of Theorem \ref{thm:reduction of inverse Galois problem, curves version}]
    Let $\lambda$ be an integer with $\lambda>g_Y$. As $K$ is infinite, $f$ is necessarily unramified over infinitely many $K$--points of $\mathbf{P}^1$. Let $\varphi:\mathbf{P}^1\to\mathbf{P}^1$ be a morphism satisfying the conclusions of Proposition \ref{prop:incompressible lemma} applied to $f$ and $\lambda$. Let $f':D=C\times_{\mathbf{P}^1}\mathbf{P}^1\to\mathbf{P}^1$ be the base change of $f$. Choose a pencil $g:Y\to\mathbf{P}^1$ whose ramification locus is disjoint from that of $f'$. As in the proof of Theorem \ref{thm:finite cover of curve}, we set $E=D\times_{\mathbf{P}^1}Y$, so that we have a commutative diagram
    \[
        \begin{tikzcd}
            E\arrow{r}{f''}\arrow{d}&Y\arrow{d}{g}\\
            D\arrow{r}{f'}\arrow{d}&\mathbf{P}^1\arrow{d}{\varphi}\\
            C\arrow{r}{f}&\mathbf{P}^1
        \end{tikzcd}
    \]
    with Cartesian squares. By our choice of $\varphi$, $D$ is regular and geometrically integral, the lower square satisfies condition $(\ast\ast)$, and $f'$ is $\lambda$-incompressible and $\lambda>g_Y$. In fact, it follows from the construction in Proposition \ref{prop:incompressible lemma} that $f'$ remains $\lambda$-incompressible after base change to a separable closure of $K$. Furthermore, by Proposition \ref{prop:key galois proposition}, $f'$ is geometrically Galois with Galois group scheme $G$. Lemma \ref{lem:galois + incompressible} therefore implies that $E$ is regular and geometrically integral and that the upper square also satisfies condition $(\ast\ast)$. By Proposition \ref{prop:key galois proposition} again, $f''$ is geometrically Galois with Galois group scheme $G$. To verify the remaining claims, we note that by Lemma \ref{lem:smoothness of fiber product lemma}, if $C$ and $Y$ are smooth then $D$ and $E$ will also be smooth. Finally, as $f'$ is $\lambda$-incompressible, we have in particular that $g_D\geq\lambda$, and hence $g_E\geq g_D\geq\lambda$, so we may choose $E$ to have arbitrarily large genus.
\end{proof}

\bibliographystyle{plain}
\bibliography{biblio}

\end{document}